\documentclass[11pt]{article}
\usepackage{amsmath,amssymb}
\usepackage{graphicx}
\usepackage{amsfonts}
\usepackage{lmodern}             

\usepackage{amsthm}
\newtheorem{thm}{Theorem}[section]

\newtheorem{prop}[thm]{Proposition}

\newtheorem{define}[thm]{Definition}
\newtheorem{assume}[thm]{Assumption}

\usepackage[USenglish]{babel} 
\usepackage[T1]{fontenc}
\usepackage[ansinew]{inputenc}
\usepackage{mathrsfs}
\usepackage{stmaryrd} 
\usepackage{epstopdf} 
\usepackage{verbatim}
\usepackage{MnSymbol}  
\usepackage{accents}   
\usepackage{bbm}			 
\usepackage{color}     
\usepackage{authblk}   
\usepackage{enumitem}
\usepackage[colorlinks=true, linkcolor=blue, citecolor=blue, urlcolor=blue]{hyperref} 
\usepackage{cleveref}  
\usepackage{placeins}  

\usepackage{tikz, standalone}
\usetikzlibrary{positioning, shapes.geometric, calc}
\definecolor{morange}{RGB}{255,127,14}
\definecolor{mblue}{RGB}{31,119,180}
\definecolor{mred}{RGB}{214,39,40}
\definecolor{mpurple}{RGB}{148,103,189}
\definecolor{mgreen}{RGB}{44,160,44}

\numberwithin{equation}{section}
\numberwithin{figure}{section}

\title{Learning deterministic and stochastic forced Hamiltonian systems}
\date{}
\author[1,2]{Benedikt Brantner\thanks{\texttt{benedikt.brantner@ipp.mpg.de}}}
\author[2,3]{Tomasz M. Tyranowski\thanks{\texttt{t.m.tyranowski@utwente.nl}}}

\affil[1]{\small Max-Planck-Institut f\"ur Plasmaphysik \authorcr Boltzmannstra{\ss}e 2, 85748 Garching, Germany\vspace{.5em}}
\affil[2]{\small Technische Universit\"{a}t M\"{u}nchen, Zentrum Mathematik \authorcr Boltzmannstra{\ss}e 3, 85748 Garching, Germany\vspace{.5em}}
\affil[3]{\small University of Twente, Department of Applied Mathematics \authorcr PO Box 217, 7500AE Enschede, The Netherlands\vspace{.1em}}

\begin{document}

\maketitle

\begin{abstract}
We develop a geometric framework for learning deterministic and stochastic forced Hamiltonian systems with neural networks. Motivated by the Lagrange-d'Alembert principle and the theory of variational integrators, we introduce the notion of a Lagrange-d'Alembert map and establish a $C^r$ convergence theorem for first-order one-step methods. Building on these results, we propose Generalized Forced Hamiltonian Neural Networks (GFHNNs), a class of structure-preserving neural networks obtained by concatenating Lagrange-d'Alembert-Euler maps, and prove a universal approximation theorem for this architecture. We further extend the framework to parameter-dependent systems, leading to Parametric Generalized Forced Hamiltonian Neural Networks (PGFHNNs). By interpreting the multiple Stratonovich integrals appearing in the Stratonovich-Taylor expansion as parameters, the same framework can be applied to stochastic forced Hamiltonian systems whenever information about the underlying Wiener process is available. Our numerical experiments demonstrate that the proposed geometric architectures provide significantly improved long-time stability and accuracy compared to non-geometric residual neural networks, while requiring substantially less training data to achieve a comparable level of performance.
\end{abstract}

\section{Introduction}
\label{sec:intro}

Forced Hamiltonian systems arise naturally in many areas of science and engineering, including robotics, plasma physics, control, and molecular dynamics. In contrast to conservative Hamiltonian systems, forced systems exchange energy with their environment through external inputs and dissipative effects. Preserving the underlying geometric structure of such systems is often essential for obtaining accurate long-time numerical simulations. The main goal of this work is to design a structure-preserving neural network architecture for learning the flow of a parametric forced Hamiltonian system

\begin{align}
\label{eq: Parametric forced Hamiltonian system}
\dot q = \frac{\partial H}{\partial p}(q,p;\mu), \qquad\qquad \dot p = -\frac{\partial H}{\partial q}(q,p;\mu) + f(q,p;\mu),
\end{align}

\noindent
defined on the cotangent bundle $T^*Q \simeq \mathbb{R}^n \times \mathbb{R}^n$ of the configuration space $Q \simeq \mathbb{R}^n$, where $H=H(q,p;
\mu)$ and $f= f(q,p; \mu)$ are the parameter-dependent Hamiltonian and external forcing functions, respectively. More broadly, we propose a geometric framework for learning parameter-dependent Lagrange-d'Alembert maps

\begin{equation}
\label{eq: Parameter dependent Lagrange-d'Alembert map}
\varphi_\mu : T^*Q \longrightarrow T^*Q,
\end{equation}

\noindent
which arise as time-$T$ maps of parametric forced Hamiltonian systems (see Definition~\ref{thm: Definition of parametric Lagrange-d'Alembert maps}).

The use of neural networks for modeling solutions of differential equations dates back to the late 1980s and early 1990s. The potential of neural networks as function approximators was already recognized in \cite{LeeKang1990}. In \cite{MeadeFernandez1994} and \cite{LagarisLikasFotiadis1998}, the authors demonstrated how the flow of ordinary differential equations (ODEs) can be approximated using feedforward neural networks, while \cite{ChenChen1995} provided a more theoretical treatment of neural-network-based methods for solving differential equations.

Around the same time, efforts emerged to incorporate structural properties into neural-network architectures. An early example is \cite{DecoBrauer1995}, where volume preservation and symplecticity are discussed. Since then, a wide variety of geometric and physical structures have been considered, leading to the development of Hamiltonian Neural Networks (HNNs, \cite{GreydanusDzambaYosinski2019}), Symplectic ODE-Nets (SympODENs, \cite{ZhongDeyChakraborty2019}), Lagrangian Neural Networks (LNNs, \cite{CranmerGreydanusHoyerBattagliaSpergelHo2020}), Symplectic Neural Networks (SympNets, \cite{JinZhangZhuTangKarniadakis2020}), port-Hamiltonian Neural Networks (pHNNs, \cite{DesaiMattheakisSondakProtopapasRoberts2021}), Dissipative Hamiltonian Neural Networks (D-HNNs, \cite{Sosanya2022}), Generalized Hamiltonian Neural Networks (GHNNs, \cite{HornKorenGHNN}), Generalized Lagrangian Neural Networks (GLNNs, \cite{XiaoZhangTang2024}), Discrete Forced Lagrangian Neural Networks (DFLNNs, \cite{HansenCelledoniTapley2025}), Parametric Generalized Hamiltonian Neural Networks (PGHNNs, \cite{Horn2026PHD,HornKorenPGHNN}), Stochastic Generating Function Neural Networks (SGFNNs, \cite{ChenWang2025}), as well as approaches for learning non-canonical Hamiltonian systems \cite{ChenMatsubara2021, CourtesFranckKrausNavoretTremant2025}, among many others.

In the following, we review the most relevant literature in greater detail. The purpose is to provide a brief overview of existing neural-network-based integration schemes and to identify the gaps that remain to be addressed.

\subsection{Non-geometric neural network integrators}

Multilayer perceptrons (MLPs) are a class of neural networks that have been used for solving ordinary differential equations (ODEs) by approximating the solution as a neural network output (see, e.g., \cite{ChenChen1995} for an early example). By virtue of the universal approximation theorem, MLPs can approximate continuous functions to arbitrary accuracy \cite{HornikStinchcombeWhite1989}. However, like traditional numerical integrators \cite{HLWGeometric}, these methods are generally non-geometric and do not preserve geometric structures, such as symplecticity, that may be present in Hamiltonian systems. Consequently, they may exhibit poor long-time behavior and fail to accurately reproduce important qualitative features of the underlying dynamics \cite{GreydanusDzambaYosinski2019}.

\subsection{Learning the governing equations: Hamiltonian Neural Networks}

The success of geometric integrators in preserving qualitative features of Hamiltonian dynamics over long time intervals (see \cite{HLWGeometric,McLachlanQuispel,SanzSerna} and the references therein) naturally motivates the incorporation of similar geometric principles into data-driven modeling. In recent years, the development of neural network architectures that preserve geometric structure while learning governing equations from data has emerged as a highly active area of research.

One of the earliest and most influential examples is the Hamiltonian Neural Network (HNN) architecture introduced in \cite{GreydanusDzambaYosinski2019}. HNNs use data consisting of phase-space coordinates and their time derivatives to learn an approximation of the underlying Hamiltonian function. The governing equations are then reconstructed through Hamilton's equations, ensuring that the learned vector field is Hamiltonian. During inference, trajectories are generated by integrating the learned Hamiltonian system, typically with a symplectic integrator.

\subsection{Learning the symplectic flow map}

Similarly to HNNs, Symplectic Neural Networks (SympNets, \cite{JinZhangZhuTangKarniadakis2020}) are designed to learn Hamiltonian dynamics from data. Unlike HNNs, however, they do not learn the governing equations, i.e. the underlying vector field, but instead learn the flow map directly. More precisely, given data consisting of pairs $((q_k,p_k),(q_{k+1},p_{k+1}))$ generated by a Hamiltonian system, the neural network aims to learn the discrete flow map

\begin{equation}
 F_{\Delta t}:T^*Q \longrightarrow T^*Q, \quad\quad (q_k,p_k)\longmapsto(q_{k+1},p_{k+1}),
\end{equation}

\noindent
for a fixed time step $\Delta t$.

H\'enon Neural Networks (H\'enonNets, \cite{BurbyHenonNets2020}) follow a closely related philosophy. Rather than learning the governing equations, H\'enonNets learn the flow map directly through compositions of elementary H\'enon-like symplectic transformations. This construction yields an explicit symplectic architecture that is well suited for approximating Hamiltonian flows. Although SympNets and H\'enonNets were originally introduced from different perspectives, \cite{HornKorenGHNN} showed that both architectures can be interpreted within the framework of Generalized Hamiltonian Neural Networks (GHNNs) (see \Cref{fig:GHNN}). More precisely, SympNets and H\'enonNets arise as particular compositions of symplectic integrators associated with separable Hamiltonians. This observation provides a unified geometric interpretation of several structure-preserving neural network architectures and establishes a direct connection between geometric numerical integration and machine learning.

It is also worth mentioning that the ``scheme learning'' approach proposed in \cite{CourtesFranckKrausNavoretTremant2025} can be interpreted from the same perspective. In that work, a non-canonical Hamiltonian system is learned through a prescribed structure-preserving numerical scheme.

\subsection{The extension to parametric systems: Parametric Generalized Hamiltonian Neural Networks}

Starting from GHNNs, the extension to parameter-dependent systems is straightforward \cite{Horn2026PHD,HornKorenPGHNN}. The resulting architecture is known as a Parametric Generalized Hamiltonian Neural Network (PGHNN) and is illustrated in \Cref{fig:PGHNN}. The parameters are incorporated by augmenting the inputs of the neural networks. Consequently, the separable Hamiltonian used in GHNNs,

\begin{equation}
\tilde H(q,p) = \tilde T(p)+\tilde U(q),
\end{equation}
is replaced by the parameter-dependent Hamiltonian

\begin{equation}
\tilde H(q,p,\mu) = \tilde T(p,\mu)+\tilde U(q,\mu),
\end{equation}
where $\mu\in\mathbb{R}^{l}$ denotes a vector of parameters, and $\tilde T, \tilde U:\mathbb{R}^{n+l}\to\mathbb{R}$ are feedforward neural networks.

\subsection{The extension to forced systems}

Various approaches to constructing structure-preserving neural network architectures for learning forced and dissipative systems have been proposed in the literature. Many of these methods were designed for the Lagrangian counterpart of \Cref{eq: Parametric forced Hamiltonian system}, namely the forced Euler-Lagrange equations on the tangent bundle $TQ$.

Generalized Lagrangian Neural Networks (GLNNs) were proposed in \cite{XiaoZhangTang2024} as an extension of Lagrangian Neural Networks \cite{CranmerGreydanusHoyerBattagliaSpergelHo2020} to non-conservative systems. By replacing the Euler-Lagrange equations with generalized Euler-Lagrange equations, GLNNs allow dissipative and externally forced dynamics to be represented within a Lagrangian framework. In contrast to the present work, GLNNs learn the governing equations by approximating a Lagrangian function and an external force with neural networks. Our approach instead learns Lagrange-d'Alembert maps directly through compositions of explicit structure-preserving numerical integrators. Consequently, GFHNNs (Section~\ref{sec: Generalized Forced Hamiltonian Neural Networks}) and PGFHNNs (Section~\ref{sec: Parametric Generalized Forced Hamiltonian Neural Networks}) learn flow maps directly rather than the underlying differential equations and are naturally rooted in the Lagrange-d'Alembert principle.

Approaches utilizing the Lagrange-d'Alembert principle are described in \cite{HansenCelledoniTapley2025} and \cite{Havens2021}. Discrete Forced Lagrangian Neural Networks (DFLNNs, \cite{HansenCelledoniTapley2025}) are based on the discrete Lagrange-d'Alembert principle and the forced discrete Euler-Lagrange equations. Their method learns a discrete Lagrangian and a discrete forcing term from data and is trained by minimizing the residual of the forced discrete Euler-Lagrange equations. Trajectories are subsequently generated by numerically solving the learned discrete Euler-Lagrange equations. By contrast, our approach represents the dynamics directly through compositions of explicit Lagrange-d'Alembert-Euler maps, avoiding the need to solve learned discrete Euler-Lagrange equations during prediction.

Forced Variational Integrator Networks (FVINs) were proposed in \cite{Havens2021} as an extension of Variational Integrator Networks \cite{Saemundsson2020} to non-conservative mechanical systems, and were further extended to Lagrangian systems on Lie groups \cite{DuruisseauxLeok2023}. Based on the discrete d'Alembert principle, FVINs employ Verlet-type variational integrators and neural-network approximations of the potential energy and forcing terms to learn forced mechanical dynamics. Similar to the present work, FVINs are rooted in forced variational mechanics. However, FVINs assume a prescribed mechanical structure in which the kinetic energy is quadratic in the velocities with a constant mass matrix, and are based on a single variational-integrator update, whereas GFHNNs and PGFHNNs learn the kinetic energy, potential energy, and forcing functions directly and are constructed as arbitrary compositions of Lagrange-d'Alembert-Euler blocks, which considerably increases their expressive power and enables the development of universal approximation results for general forced Hamiltonian systems.

On the Hamiltonian side, Dissipative Hamiltonian Neural Networks (D-HNNs) were proposed in \cite{Sosanya2022} as an extension of Hamiltonian Neural Networks to dissipative systems. D-HNNs learn both a Hamiltonian function and a Rayleigh dissipation function, thereby separating the conservative and dissipative components of the dynamics. Like HNNs, however, D-HNNs learn the governing differential equations rather than the flow map itself. In contrast, GFHNNs and PGFHNNs are flow-map-based models constructed from structure-preserving numerical integrators and naturally incorporate general forcing terms within the Lagrange-d'Alembert framework. Forced systems have also been studied from a port-Hamiltonian perspective in \cite{DesaiMattheakisSondakProtopapasRoberts2021, ZhongDeyChakraborty2019, ZhongDeyChakraborty2020}.

\subsection{The extension to stochastic Hamiltonian systems}

While neural networks have been applied to stochastic differential equations (see \cite{ChenXiu2024, DridiDrumetzFablet2021, Dietrich2023, LiuXiao2019, KongSunZhang2020, WangYao2021, Xu2024, zhu2024dyngma} and the references therein), structure-preserving learning of stochastic geometric systems remains largely unexplored. To the best of our knowledge, the existing results are limited to stochastic Hamiltonian systems without forcing.

A quadrature-based extension of HNNs to stochastic Hamiltonian systems was proposed in \cite{ChengWang2024}. Their approach learns the drift and diffusion Hamiltonians directly from data by combining neural-network approximations with numerical quadrature and moment-based denoising. In contrast, the present work does not learn the governing Hamiltonians. Instead, we learn Lagrange-d'Alembert maps directly through compositions of structure-preserving numerical integrators.

Recently, Stochastic Generating Function Neural Networks (SGFNNs) for learning stochastic Hamiltonian systems from observational data were introduced in \cite{ChenWang2025}. Their approach learns a stochastic generating function whose associated flow map is symplectic by construction and employs an autoencoder architecture to infer latent random variables corresponding to the unobservable noise. While both SGFNNs and the present work aim to learn structure-preserving flow maps, the underlying geometric frameworks are fundamentally different. SGFNNs rely on stochastic generating functions and recover the flow map through generating-function relations, resulting in an implicit symplectic map. Consequently, trajectory generation requires solving nonlinear equations during inference, similarly to implicit geometric integrators. In contrast, our approach is based on the Lagrange-d'Alembert principle and on approximating Lagrange-d'Alembert maps through compositions of explicit Lagrange-d'Alembert-Euler maps. As a result, GFHNNs and PGFHNNs learn the flow map directly and can generate trajectories using only forward evaluations of the network architecture, without the need for iterative nonlinear solvers.

\subsection{Outline of the paper}

The remainder of this paper is organized as follows.

In Section~\ref{sec:Forced Hamiltonian systems and Lagrange-d'Alembert maps}, we review forced Hamiltonian systems and the Lagrange-d'Alembert principle, and introduce the notion of a Lagrange-d'Alembert map. We further review Lagrange-d'Alembert integrators and establish the approximation results that form the mathematical foundation of the proposed neural network architectures.

In Section~\ref{sec: Generalized Forced Hamiltonian Neural Networks}, we introduce Generalized Forced Hamiltonian Neural Networks (GFHNNs) and establish a universal approximation theorem for this architecture.

Section~\ref{sec: Parametric Generalized Forced Hamiltonian Neural Networks} extends the construction to parameter-dependent systems through Parametric Generalized Forced Hamiltonian Neural Networks (PGFHNNs) and proves a corresponding universal approximation theorem.

Section~\ref{sec: Learning time-dependent and stochastic systems} discusses the application of the PGFHNN framework to time-dependent and stochastic forced Hamiltonian systems.

Numerical experiments for autonomous, time-dependent, and stochastic forced Hamiltonian systems are presented in Section~\ref{sec: Numerical experiments}.

Finally, Section~\ref{sec: Summary} summarizes the main findings and outlines directions for future research.

\section{Forced Hamiltonian systems and Lagrange-d'Alembert maps}
\label{sec:Forced Hamiltonian systems and Lagrange-d'Alembert maps}

We begin by reviewing forced Hamiltonian systems, their flows, and Lagrange-d'Alembert integrators, and by establishing several results needed later in the paper. In particular, we introduce the notion of a Lagrange-d'Alembert map, prove a $C^r$ convergence theorem for first-order one-step methods, and extend the framework to parametric forced Hamiltonian systems.

\subsection{Time-dependent forced Hamiltonian systems}
\label{sec:Time-dependent forced Hamiltonian systems}

For simplicity, in this work we assume that the configuration space $Q \simeq \mathbb{R}^n$ of the systems under consideration is a vector space. The evolution of a time-dependent (non-autonomous) forced Hamiltonian system takes place on the cotangent bundle $T^*Q \simeq \mathbb{R}^n \times \mathbb{R}^n$ and is governed by the differential equations

\begin{align}
\label{eq: Time-dependent forced Hamiltonian system}
\dot q = \frac{\partial H}{\partial p}(q,p,t), \qquad\qquad \dot p = -\frac{\partial H}{\partial q}(q,p,t)+f(q,p,t),
\end{align}

\noindent
where $H: T^*Q \times \mathbb{R} \longrightarrow \mathbb{R}$ is the Hamiltonian function and $f_H: T^*Q \times \mathbb{R} \longrightarrow T^*Q$ is a fiber-preserving mapping representing external non-conservative forcing, given in coordinates by $f_H(q,p,t) = (q, f(q,p,t))$ (see \cite{MarsdenRatiuSymmetry, MarsdenWestVarInt}). The corresponding flow $F_{t,t_0}: T^*Q \longrightarrow T^*Q$ depends on both the final time $t$ and the initial time $t_0$. We refer to the flows of forced Hamiltonian systems as \emph{Lagrange-d'Alembert flows}. In order to guarantee the existence, uniqueness, and suitable regularity of the flow $F_{t,t_0}$, the Hamiltonian and the external force have to satisfy appropriate conditions.

\begin{assume}[{\bf Standing assumptions on the forced Hamiltonian system}]
\label{ass:standing}
Let $T>0$ and let $r\ge 0$ be an integer. Throughout this work, we consider a forced Hamiltonian system with the Hamiltonian $H: T^*Q \times [0,T] \longrightarrow \mathbb{R}$ and the forcing term
$f: T^*Q \times [0,T] \longrightarrow \mathbb{R}^n$, and assume that the following conditions hold:
\begin{enumerate}[label=(\roman*), ref=(\roman*)]
    \item\label{ass:H-reg} $H \in C^{r+2}(T^*Q\times [0,T])$ and $f \in C^{r+1}(T^*Q\times [0,T], \mathbb{R}^n)$;
	\item\label{ass:hyperregular} $H$ is hyperregular, that is, for all $t \in [0,T]$, the fiber derivative
		\[
		\mathbb{F}H(\cdot,\cdot,t): T^*Q \ni (q,p) \longmapsto \bigg(q,\frac{\partial H}{\partial p}(q,p,t) \bigg) \in TQ
		\]
		is a diffeomorphism;
	\item\label{ass:global-flow} For every $(q_0,p_0)\in T^*Q$ and every $t_0\in[0,T]$, the solution of \eqref{eq: Time-dependent forced Hamiltonian system} with the initial condition $(q(t_0),p(t_0))=(q_0,p_0)$ exists on the entire interval $[0,T]$. Equivalently, the associated flow map
\[
F_{t,t_0}:T^*Q\longrightarrow T^*Q
\]
is well defined for all $t,t_0\in[0,T]$.
\end{enumerate}
\end{assume}

\noindent
Assumption~\ref{ass:standing}~\ref{ass:H-reg} guarantees the local existence and uniqueness of solutions to \Cref{eq: Time-dependent forced Hamiltonian system}, with the flow $F_{t,t_0}$ being jointly $C^{r+1}$ in all variables $(t,t_0,q,p)$ (for a proof, see, e.g., \cite{CoddingtonLevinsonBook,HaleODE1969}). While this assumption is sufficient for the purposes of the present work, it could be relaxed if necessary. In particular, some applications may require less regularity in $t$. In that case, additional conditions must be imposed, for instance, a Lipschitz condition in $(q,p)$ that is uniform in time on $\frac{\partial H}{\partial q}$, $\frac{\partial H}{\partial p}$, and $f$. Assumption~\ref{ass:standing}~\ref{ass:hyperregular} is imposed for simplicity and clarity of exposition, ensuring that \Cref{eq: Time-dependent forced Hamiltonian system} admits an equivalent Lagrangian formulation and that type-I generating functions exist (see Section~\ref{sec:Lagrange-d'Alembert principle}). This assumption can also be relaxed if necessary; however, for degenerate Hamiltonians one must verify which boundary conditions are admissible and choose an appropriate type of generating function. Assumption~\ref{ass:standing}~\ref{ass:global-flow} is imposed to guarantee that solutions with initial conditions in a compact set remain in a compact set over the time interval $[0,T]$. This property will be required later in the proof of the universal approximation theorem.

\subsection{Lagrange-d'Alembert principle}
\label{sec:Lagrange-d'Alembert principle}

Unlike for canonical Hamiltonian systems, the flow $F_{t,t_0}:T^*Q \longrightarrow T^*Q$ for \Cref{eq: Time-dependent forced Hamiltonian system} is in general not symplectic. However, forced Hamiltonian systems have an underlying variational principle, the so-called \emph{Lagrange-d'Alembert principle}. Denote by $C^1([t_a,t_b],T^*Q)$ the space of all $C^1$ paths in $T^*Q$, and define the phase space action functional $\mathcal{B}: C^1([t_a,t_b],T^*Q) \longrightarrow \mathbb{R}$ by

\begin{equation}
\label{eq: Phase space action functional}
\mathcal{B}[q(\cdot),p(\cdot)] = \int_{t_a}^{t_b} \big( p(t)\cdot \dot q(t)-H(q(t),p(t),t)\big)\,dt.
\end{equation}

\noindent
The Lagrange-d'Alembert principle states that a curve $(q(t),p(t))$ in $T^*Q$ satisfies \Cref{eq: Time-dependent forced Hamiltonian system} for $t\in[t_a,t_b]$ if and only if it satisfies the variational equation

\begin{equation}
\label{eq: Lagrange-d'Alembert principle}
\delta \mathcal{B}[q(\cdot),p(\cdot)] + \int_{t_a}^{t_b} f(q(t),p(t),t)\cdot \delta q(t) \, dt = 0
\end{equation}

\noindent
for all variations $\delta q(t)$ with fixed endpoints, $\delta q(t_a)=\delta q(t_b)=0$, and all variations $\delta p(t)$. This result was proved, for example, in \cite{MarsdenRatiuSymmetry} and \cite{MarsdenWestVarInt}. The Lagrange-d'Alembert principle generalizes Hamilton's principle for canonical Hamiltonian systems, and provides the intrinsic geometric structure of forced Hamiltonian systems. It can be further used to define the so-called type-I generating function $S$ and the type-I exact discrete forces $f^\pm$ as

\begin{subequations}
\label{eq: Generating function and exact discrete forces}
\begin{align}
\label{eq: Generating function and exact discrete forces 1}
S(q_a,q_b; t_a, t_b) &= \int_{t_a}^{t_b} \Big(\bar p(t)\cdot\dot{\bar{q}}(t) - H(\bar q(t),\bar p(t),t)\Big)\,dt, \\
\label{eq: Generating function and exact discrete forces 2}
f^+(q_a,q_b; t_a, t_b) &= \int_{t_a}^{t_b}  f(\bar q(t),\bar p(t),t)\cdot \frac{\partial \bar q(t)}{\partial q_b}\,dt,  \\
\label{eq: Generating function and exact discrete forces 3}
f^-(q_a,q_b; t_a, t_b) &= \int_{t_a}^{t_b}  f(\bar q(t),\bar p(t),t)\cdot \frac{\partial \bar q(t)}{\partial q_a}\,dt,
\end{align}
\end{subequations}

\noindent
where $(\bar q(t; q_a,q_b,t_a,t_b),\bar p(t; q_a,q_b,t_a,t_b))=F_{t,t_a}(q_a,p_a)$ is the trajectory of \Cref{eq: Time-dependent forced Hamiltonian system} satisfying the boundary conditions $\bar q(t_a; q_a,q_b,t_a,t_b)=q_a$ and $\bar q(t_b; q_a,q_b,t_a,t_b)=q_b$. Local existence of these functions, for $q_b$ sufficiently close to $q_a$ and $t_b$ sufficiently close to $t_a$, is guaranteed by \cite[Corollary~7.4.6]{MarsdenRatiuSymmetry}. Instead of specifying the motion directly in terms of positions and momenta, the generating function and exact discrete forces capture the relationship between the initial and final states $(q_b,p_b)=F_{t_b,t_a}(q_a,p_a)$ via the equations

\begin{align}
\label{eq: Lagrange-d'Alembert flow via generating function and forces}
p_a = -D_1 S(q_a,q_b; t_a, t_b)-f^-(q_a,q_b; t_a, t_b), \quad\qquad p_b = D_2 S(q_a,q_b; t_a, t_b)+f^+(q_a,q_b; t_a, t_b).
\end{align}

\noindent
This result was proved in \cite[Lemma~1.6.2]{MarsdenWestVarInt} and \cite[Section~3.2.4]{MarsdenWestVarInt}, and together with an application of the implicit function theorem as in \cite[Theorem~7.4.5]{MarsdenRatiuSymmetry}, guarantees the differentiability of $S(q_a,q_b; t_a, t_b)$ and $\bar q(t; q_a,q_b,t_a,t_b)$ with respect to $q_a$ and $q_b$. A similar result for stochastic forced Hamiltonian systems was proved in \cite{KrausTyranowski2019}.

\subsection{Lagrange-d'Alembert maps}
\label{sec:Lagrange-d'Alembert maps}

A Hamiltonian map is a (symplectic) diffeomorphism of the phase space $T^*Q$ that arises as the time-$t$ evolution map of the flow of a canonical Hamiltonian system (see, e.g., \cite{McDuffSalamonBook,PolterovichBook2001}). In a similar spirit, we introduce the following definition.

\begin{define}[{\bf Lagrange-d'Alembert map}]
\label{thm: Definition of Lagrange-d'Alembert maps}
Let $r\ge 0$ be an integer. A diffeomorphism $\varphi:T^*Q \longrightarrow T^*Q$ is called a Lagrange-d'Alembert map of class $C^{r+1}$ if there exists a time-dependent forced Hamiltonian system \eqref{eq: Time-dependent forced Hamiltonian system} satisfying Assumption~\ref{ass:standing} for some $T>0$, with the associated Lagrange-d'Alembert flow $F_{t,t_0}$, such that $\varphi = F_{T,0}$. The set of all Lagrange-d'Alembert maps of class $C^{r+1}$ on $T^*Q$ is denoted by $LdA^{r+1}(T^*Q)$.
\end{define}

\paragraph{Remark.} Note that a Lagrange-d'Alembert map does not uniquely determine its generating forced Hamiltonian system, since different forcing terms and Hamiltonian functions (e.g., up to additive time-dependent terms or reparametrizations) can generate the same flow map. Furthermore, in light of Definition~\ref{thm: Definition of Lagrange-d'Alembert maps}, a Lagrange-d'Alembert map $\varphi$ can also be expressed in terms of a type-I generating function and type-I exact discrete forces \eqref{eq: Generating function and exact discrete forces}.

\subsection{Lagrange-d'Alembert integrators}
\label{sec: Lagrange-d'Alembert integrators}

Geometric numerical integration provides structure-preserving discretizations of dynamical systems. Among such methods, \emph{variational integrators} occupy a central role. These numerical schemes are based on discrete variational principles and provide a natural framework for the discretization of Lagrangian systems, including forced, dissipative, or constrained ones. These methods have the advantage that they are symplectic when applied to systems without forcing, and in the presence of symmetries, they satisfy a discrete version of Noether's theorem. For an overview of variational integration see \cite{MarsdenWestVarInt,HallLeokSpectral,KaneMarsden2000,LeokShingel,LeokZhang,OberBlobaum2016,RowleyMarsden,TyranowskiDesbrunLinearLagrangians}. Variational integrators were introduced in the context of finite-dimensional mechanical systems, but were later generalized to Lagrangian field theories \cite{MarsdenPatrickShkoller} and applied in a wide range of computational settings, for example in elasticity, electrodynamics, or fluid dynamics; see \cite{LewAVI,Pavlov,SternDesbrun,TyranowskiDesbrunRAMVI}.

A class of variational integrators derived from a discrete version of the Lagrange-d'Alembert principle \eqref{eq: Lagrange-d'Alembert principle} are the so-called \emph{Lagrange-d'Alembert integrators} \cite{MarsdenWestVarInt}. These integrators are constructed by specifying a discrete Lagrangian $L_d \approx S$ that approximates the exact generating function, together with discrete forces $f_d^\pm \approx f^\pm$ that approximate the exact discrete forces. For a given time step $\Delta t$, an approximate flow of the forced Hamiltonian system, $\widehat F_{t_{k+1}, t_k}: (q_k,p_k)\longmapsto(q_{k+1},p_{k+1})$, is implicitly given by the equations

\begin{align}
\label{eq: Lagrange-d'Alembert integrator}
p_k &= -D_1 L_d(q_k,q_{k+1}; t_k,t_{k+1})-f_d^-(q_k,q_{k+1}; t_k,t_{k+1}), \nonumber \\
p_{k+1} &= \phantom{-}D_2 L_d(q_k,q_{k+1}; t_k,t_{k+1})+f_d^+(q_k,q_{k+1}; t_k,t_{k+1}).
\end{align}

\noindent
An integrator derived in this way generates a discrete trajectory $((q_0,p_0), (q_1,p_1), \ldots)$ in $T^*Q$. The discrete equations~\eqref{eq: Lagrange-d'Alembert integrator} also follow from a discrete counterpart of the Lagrange-d'Alembert principle \eqref{eq: Lagrange-d'Alembert principle}. Let $t_k=t_a+k \Delta t$ for $k=0,1,\ldots,N$ with $\Delta t = (t_b-t_a)/N$, and define the discrete action functional

\begin{equation}
\label{eq: Discrete action functional}
\mathcal{B}_d[\{q_k\}_{k=0,\ldots,N}] = \sum_{k=0}^{N-1} L_d(q_k,q_{k+1}; t_k,t_{k+1}).
\end{equation}

\noindent
The \emph{discrete Lagrange-d'Alembert principle} states that the discrete system follows the trajectory $\{q_k\}_{k=0,\ldots,N}$ that satisfies

\begin{equation}
\label{eq: Discrete Lagrange-d'Alembert principle}
\delta \mathcal{B}_d + \sum_{k=0}^{N-1} \big( f_d^-(q_k,q_{k+1}; t_k,t_{k+1}) \cdot \delta q_k + f_d^+(q_k,q_{k+1}; t_k,t_{k+1}) \cdot \delta q_{k+1} \big) = 0,
\end{equation}

\noindent
for all variations $\{\delta q_k\}_{k=0,\ldots,N}$ vanishing at the endpoints, that is, $\delta q_0=\delta q_N=0$. This is equivalent to the system of equations

\begin{equation}
D_2L_d(q_{k-1},q_k;t_{k-1},t_k) + D_1L_d(q_{k},q_{k+1};t_{k},t_{k+1}) + f_d^+(q_{k-1},q_k;t_{k-1},t_k) + f_d^-(q_k,q_{k+1}; t_k,t_{k+1}) = 0,
\end{equation}

\noindent
for $k=1,\ldots,N-1$, which can be recast as the system~\eqref{eq: Lagrange-d'Alembert integrator} by introducing auxiliary momentum variables $p_k$ (see \cite[Section~3.2]{MarsdenWestVarInt} for more details). Two Lagrange-d'Alembert integrators can be composed, and in this way one obtains a new Lagrange-d'Alembert integrator. Let $\widehat F_1: (q_k,p_k)\longmapsto(q_{k+1},p_{k+1})$ be generated by $L_1(q_k,q_{k+1})$ and $f_1^\pm(q_k,q_{k+1})$, and let $\widehat F_2: (q_k,p_k)\longmapsto(q_{k+1},p_{k+1})$ be generated by $L_2(q_k,q_{k+1})$ and $f_2^\pm(q_k,q_{k+1})$, respectively, where we omit the time arguments for brevity. Then the composition $\widehat F=\widehat F_2 \circ \widehat F_1$ is also a Lagrange-d'Alembert integrator, with the discrete Lagrangian $L_d$ and discrete forces $f_d^\pm$ given by (see \cite[Example~3.2.4]{MarsdenWestVarInt} and \cite[Example~4.3]{WestPHD} for details)

\begin{align}
\label{eq: S, f+, and f- for the composition}
L_d(q_k,q_{k+1}; t_k,t_{k+1}) &= L_1(q_k,q_c) + L_2(q_c,q_{k+1}), \nonumber \\
f_d^+(q_k,q_{k+1}; t_k,t_{k+1})&= f_2^+(q_c,q_{k+1}) + \big(f_1^+(q_k,q_c) + f_2^-(q_c,q_{k+1}) \big)\cdot \frac{\partial q_c}{\partial q_{k+1}},\\
f_d^-(q_k,q_{k+1}; t_k,t_{k+1})&= f_1^-(q_k,q_c) + \big(f_1^+(q_k,q_c) + f_2^-(q_c,q_{k+1}) \big)\cdot \frac{\partial q_c}{\partial q_k}, \nonumber
\end{align}

\noindent
where the point $q_c=q_c(q_k,q_{k+1})$ is determined by the condition

\begin{equation}
\label{eq: Condition for qc(q_k,q_{k+1})}
D_2 L_1(q_k,q_c) + D_1 L_2(q_c,q_{k+1}) + f_1^+(q_k,q_c) + f_2^-(q_c,q_{k+1})=0.
\end{equation}

The simplest example of a Lagrange-d'Alembert integrator is an extension of the symplectic Euler scheme,

\begin{align}
\label{eq: General Lagrange-d'Alembert Euler scheme}
q_{k+1} &= q_k + \Delta t \frac{\partial H}{\partial p}(q_{k+1},p_k,t_k), \nonumber \\
p_{k+1} &= p_k+\Delta t \bigg[-\frac{\partial H}{\partial q}(q_{k+1},p_k,t_k)+ f(q_{k+1},p_k,t_k)\bigg],
\end{align}

\noindent
which we will call the Lagrange-d'Alembert-Euler scheme and denote by $\Phi^{H,f}_{t_{k+1}, t_k}: (q_k,p_k)\longmapsto(q_{k+1},p_{k+1})$. The forced Hamiltonian system \eqref{eq: Time-dependent forced Hamiltonian system} can also be solved by applying a splitting method. In this approach, the associated vector field is decomposed into a conservative Hamiltonian component and a non-conservative forcing component, namely,

\begin{equation}
\begin{aligned}
\left\{
\begin{aligned}
\dot{q} &= \phantom{-}\frac{\partial H}{\partial p}(q,p,t), \\
\dot{p} &=-\frac{\partial H}{\partial q}(q,p,t),
\end{aligned}
\right.
\qquad
\text{and} \qquad
\left\{
\begin{aligned}
\dot{q} &= 0, \\
\dot{p} &= f(q,p,t).
\end{aligned}
\right.
\end{aligned}
\end{equation}

\noindent
Applying the Lagrange-d'Alembert-Euler method to these subsystems yields the discrete flows $\Phi^{H}_{t_{k+1}, t_k} \equiv\Phi^{H,0}_{t_{k+1}, t_k}$ and $\Phi^{f}_{t_{k+1}, t_k} \equiv\Phi^{0,f}_{t_{k+1}, t_k}$, respectively. The Lie-Trotter splitting method \cite{HLWGeometric} is then defined by composition,

\begin{equation}
\label{eq:Lie-Trotter splitting method}
\Phi^{LT}_{t_{k+1}, t_k} = \Phi^{f}_{t_{k+1}, t_k} \circ\Phi^{H}_{t_{k+1}, t_k}.
\end{equation}

\noindent
Since both component flows are Lagrange-d'Alembert integrators, their composition is again a Lagrange-d'Alembert integrator.

Both the Lagrange-d'Alembert-Euler method and the Lie-Trotter method are first-order accurate. It is a standard result in numerical analysis that such methods are therefore convergent of order one. However, for our purposes, we require convergence in the strong $C^r$ topology on compact sets in order to formulate a universal approximation theorem for our neural networks in Section~\ref{sec: Universal approximation theorem for GFHNNs}.

\begin{thm}[{\bf $C^r$ convergence of first-order one-step methods}]\label{thm:Convergence of first-order one-step methods}
Let $r\ge 0$ be an integer and let $g \in C^{r+1}(\mathbb{R}^d \times [0,T],\mathbb{R}^d)$. Consider the initial value problem
\begin{equation}
\dot x(t) = g(x(t),t), \qquad x(0)=x.
\end{equation}
Denote by $F_{t,t_0}:\mathbb{R}^d \longrightarrow \mathbb{R}^d$ the associated flow, and assume that it is defined for all $t,t_0\in[0,T]$. Let $\hat F_{t+\Delta t,t} : \mathbb{R}^d \longrightarrow \mathbb{R}^d$ be a one-step method of class $C^r$ with respect to $x$ and with time step $\Delta t$, and define

\begin{equation}
x_{k+1} = \hat F_{t_{k+1},t_k}(x_k), \qquad t_k = k \Delta t, \qquad x_0 = x,
\end{equation}

\noindent
and $\Phi_{\Delta t}^k := \hat F_{t_{k},t_{k-1}}\circ\ldots\circ\hat F_{t_{1},t_0}$. Assume that for each compact $K \subset \mathbb{R}^d$ the following stability and consistency conditions hold:

\begin{enumerate}[label=(\roman*), ref=(\roman*)]
\item\label{ass:First-order consistency} There exists a constant $M_K>0$ such that 
\begin{equation}
\sup_{(x,t)\in K\times [0,T-\Delta t]} \left\| D^l \hat F_{t+\Delta t,t}(x) - D^l F_{t+\Delta t,t}(x) \right\| \le M_K \Delta t^2,
\qquad l=0,\dots,r,
\end{equation}

\noindent
where $D^l$ denotes the $l$-th derivative with respect to the $x$ variable;

\item\label{ass:stability} There exists a constant $L_K>0$ such that 

\begin{equation}
\sup_{(x,t)\in  K\times [0,T-\Delta t]} \left\|D \hat F_{t+\Delta t,t}(x)\right\| \le 1 + L_K \Delta t.
\end{equation}

\end{enumerate}

Then for every compact $K \subset \mathbb{R}^d$ there exists a constant $C_K>0$ such that

\begin{equation}
\label{eq:Global error estimate of a one-step method}
\max_{0 \le k \le \lfloor T/\Delta t\rfloor} \left\|\Phi_{\Delta t}^{k} - F_{t_k,0}\right\|_{C^r(K)} \le C_K \Delta t.
\end{equation}
\end{thm}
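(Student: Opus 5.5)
The proof follows the classical Lady Windermere's fan argument, run simultaneously for all derivatives $D^l$, $l=0,\dots,r$. The plan is an induction on $l$. At each stage we write the global error as a telescoping sum of local errors transported by exact flows, or equivalently as a discrete Gronwall recursion.

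\textbf{Step 1: an enlarged compact set.} Since the flow $F_{t,t_0}$ is defined on $[0,T]$ and jointly continuous, the set $K_1 := \{F_{t,0}(x) : x\in K,\ t\in[0,T]\}$ is compact. Let $K_2$ be its closed unit neighbourhood. All constants $M_{K_2}$, $L_{K_2}$ are taken on $K_2$, together with bounds on $D^jF_{t,s}$ for $j\le r+1$ over $K_2$ and $0\le s\le t\le T$; the latter exist because $g\in C^{r+1}$ and the flow is $C^{r+1}$.

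\textbf{Step 2: the case $l=0$, with a bootstrap.} Set $e_k := \Phi^k_{\Delta t}(x)-F_{t_k,0}(x)$. Write
\[
\Phi^{k+1}_{\Delta t}(x)-F_{t_{k+1},0}(x) = \bigl[\hat F_{t_{k+1},t_k}(\Phi^k_{\Delta t}(x))-\hat F_{t_{k+1},t_k}(F_{t_k,0}(x))\bigr] + \bigl[\hat F_{t_{k+1},t_k}-F_{t_{k+1},t_k}\bigr](F_{t_k,0}(x)).
\]
As long as the segment between $\Phi^k_{\Delta t}(x)$ and $F_{t_k,0}(x)$ lies in $K_2$, the mean value theorem and hypothesis (ii) give $\|e_{k+1}\|\le (1+L\Delta t)\|e_k\| + M\Delta t^2$. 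Hence $\|e_k\|\le (M/L)(e^{LT}-1)\Delta t$. For $\Delta t$ smaller than some $\Delta t_0$ this bound is below $1$, and an induction on $k$ keeps the iterates inside $K_2$, which closes the bootstrap. For $\Delta t\ge\Delta t_0$ there are only boundedly many steps and all quantities are bounded, so enlarging $C_K$ suffices. Technically the iterates for such $\Delta t$ must also stay bounded. I would handle this by noting that hypotheses (i)–(ii) only apply on compact sets, and that a fixed finite number of steps of a continuous map sends $K$ to a compact set, so the supremum is finite.

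\textbf{Step 3: derivatives of order $1\le l\le r$.} Differentiate the recursion $\Phi^{k+1}=\hat F_{t_{k+1},t_k}\circ\Phi^k$ and its exact analogue $l$ times using the Faà di Bruno formula. The top-order term is $D\hat F(\Phi^k)\,D^l\Phi^k - DF(F_{t_k,0})\,D^lF_{t_k,0}$. We split it as
\[
D\hat F(\Phi^k)\,\bigl(D^l\Phi^k-D^lF_{t_k,0}\bigr) + \bigl[D\hat F(\Phi^k)-D\hat F(F_{t_k,0})\bigr]D^lF_{t_k,0} + \bigl[D\hat F-DF\bigr](F_{t_k,0})\,D^lF_{t_k,0}.
\]
\begin{itemize}
\item The first piece is controlled by $(1+L\Delta t)$ times the error $E^l_k$.
\item The third piece is $O(\Delta t^2)$ by (i).
\item The second piece needs $D^2\hat F$ to be Lipschitz-like with an $O(\Delta t)$ constant. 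The lower-order Faà di Bruno terms likewise involve $D^j\hat F$ with $j\ge2$.
\end{itemize}

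\textbf{Main obstacle.} The crucial observation is that for $j\ge2$ we have $\|D^j\hat F\|\le\|D^jF_{t+\Delta t,t}\|+M\Delta t^2 = O(\Delta t)$. This holds because $F_{t+\Delta t,t}=\mathrm{id}+\int g$, so $D^jF_{t+\Delta t,t}=O(\Delta t)$ for $j\ge2$ by the variational equations, which requires $g\in C^{r+1}$ and $j\le r$. The case $j=r+1$, needed for a Lipschitz bound on $D^r\hat F$, is not available. To avoid it, I would compare $D^j\hat F(\Phi^k)-D^jF(F_{t_k,0})$ through the intermediate term $D^jF(\Phi^k)$. Then $[D^j\hat F-D^jF](\Phi^k)$ is $O(\Delta t^2)$ by (i) on $K_2$, and $D^jF(\Phi^k)-D^jF(F_{t_k,0})$ is $O(\Delta t)\cdot\|e_k\|$ using the $C^{r+1}$ regularity of the exact flow.

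Applying this to all lower-order terms, and using the induction hypothesis $E^j_k=O(\Delta t)$ for $j<l$ together with the uniform boundedness of $D^j\Phi^k$ that it implies, we obtain
\[
E^l_{k+1}\le(1+L\Delta t)E^l_k + C\Delta t^2 .
\]
Discrete Gronwall then gives $E^l_k\le C_l\Delta t$, and taking the maximum over $l\le r$ yields \eqref{eq:Global error estimate of a one-step method}.
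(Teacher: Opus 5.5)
Your proposal is correct and follows essentially the paper's route. You confine the exact and numerical trajectories to an enlarged compact set via the $C^0$ estimate, induct on the derivative order with Fa\`a di Bruno, split the top-order error exactly as the paper's $(A_1)$--$(A_3)$ by comparing through $DF_{t_{k+1},t_k}(x_k)$, and close with discrete Gr\"onwall. Your explicit bootstrap for the $C^0$ step and your observation that $D^jF_{t+\Delta t,t}=O(\Delta t)$ for $j\ge 2$ (so the lower-order Fa\`a di Bruno terms contribute only $O(\Delta t^2)$) fill in what the paper leaves to a citation and to ``the induction hypothesis controls the lower-order terms.''

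The only soft spot is your aside on $\Delta t\ge\Delta t_0$. A constant uniform in $\Delta t$ comes from iterating (i), whose constants are independent of $\Delta t$, over finitely many compacts; continuity of a single map only gives a finite bound for each fixed $\Delta t$. The paper sidesteps this by restricting to sufficiently small $\Delta t$.
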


\begin{proof}
We only indicate the steps that go beyond the standard results from the theory of one-step methods. Let $K\subset \mathbb{R}^d$ be compact, and let $\Delta t >0$ be sufficiently small.

\begin{enumerate}[label=\alph*)]

\item By the classical convergence theory of one-step methods (see, e.g., \cite[Ch.~II]{HWODE1}), consistency~\ref{ass:First-order consistency} and stability~\ref{ass:stability} imply that there exists a constant $C_{K,0}>0$ such that

\begin{equation}
\label{eq:C^0 estimate}
\max_{0 \le k \le \lfloor T/\Delta t\rfloor}  \sup_{x\in K} \left|\Phi_{\Delta t}^{k}(x) - F_{t_k,0}(x)\right| \le C_{K,0} \Delta t
\end{equation}

\noindent
for all sufficiently small $\Delta t$.

\item Since $g \in C^{r+1}$, the flow $F_{t,t_0}$ is jointly of class $C^{r+1}$ in the variables $(t, t_0, x)$, and therefore the set

\begin{equation}
K_T:=\{ F_{t,0}(x) \;|\; t\in[0,T], x \in K \}
\end{equation}

\noindent
is compact. Moreover, in view of the $C^0$ estimate \eqref{eq:C^0 estimate}, there exists a compact set $\bar K_{T}$ such that $K_T \subset \bar K_{T}$ and $x_k \in \bar K_{T}$ for $k=0,1,\ldots,\lfloor T/\Delta t\rfloor$ and all sufficiently small $\Delta t >0$. In other words, both the exact and the numerical trajectories starting in the compact set $K$ remain in the compact set $\bar K_{T}$.

\item In addition, the derivatives of $F_{t,t_0}$ with respect to $x$ satisfy variational equations \cite[\S I.14]{HWODE1}. In particular, all derivatives $D^l F_{t,t_0}(x)$ for $l=0,1,\ldots,r$ are bounded on compact sets.

\item\label{Estimating the first derivative} Let $J_k(x) := D\Phi_{\Delta t}^k(x)$ and $J(x,t) := DF_{t,0}(x)$. Using the chain rule, we obtain

\begin{equation}
\label{eq:J_(k+1) from the chain rule}
J_{k+1}(x) = D\hat F_{t_{k+1},t_k}(x_k)\, J_k(x),
\end{equation}

\noindent
and

\begin{equation}
\label{eq:J(t_(k+1)) from the chain rule}
J(x,t_{k+1}) = DF_{t_{k+1},t_k}\big(F_{t_{k},0}(x)\big)\, J(x,t_k).
\end{equation}

\noindent
Define the error after $k$ steps by

\begin{equation}
\label{eq:Error of the first derivative}
E_k(x):=J_k(x)-J(x,t_k),  \qquad \text{for $k=1,2,\ldots$},
\end{equation}

\noindent
with $E_0(x):=0$. Using \Cref{eq:J_(k+1) from the chain rule}, \Cref{eq:J(t_(k+1)) from the chain rule}, and \Cref{eq:Error of the first derivative}, and adding and subtracting suitable intermediate terms, we obtain

\begin{align}
\label{eq:E_(k+1) recursion}
E_{k+1}(x) = \underbrace{D\hat F_{t_{k+1},t_k}(x_k) E_k(x)}_{(A_1)} &+ \underbrace{\big(D\hat F_{t_{k+1},t_k}(x_k)-DF_{t_{k+1},t_k}(x_k)\big) J(x,t_k)}_{(A_2)} \nonumber \\
           &+ \underbrace{\Big(DF_{t_{k+1},t_k}(x_k)-DF_{t_{k+1},t_k}\big(F_{t_{k},0}(x)\big)\Big)J(x,t_k)}_{(A_3)}.
\end{align}

\noindent
The term $(A_1)$ can be bounded using the stability assumption \ref{ass:stability} on the set $\bar K_{T}$. The term $(A_2)$ can be bounded using the consistency assumption \ref{ass:First-order consistency} on the set $\bar K_{T}$ together with the boundedness of $J(x,t)$ on $K\times [0,T]$, yielding $(A_2)=O(\Delta t^2)$. To bound the term $(A_3)$, note that

\begin{equation}
DF_{t_{k+1},t_k}(x) = I + \Delta t D_x g(x,t_k) + O(\Delta t^2),
\end{equation}

\noindent
where $I$ is the identity matrix. Since $J(x,t)$ is bounded on $K\times [0,T]$ and $D_x g$ is of class $C^r$ on $\bar K_{T} \times [0,T]$, the $C^0$ error estimate \eqref{eq:C^0 estimate} implies that $(A_3)=O(\Delta t^2)$. Altogether, we have

\begin{equation}
\left\|E_{k+1}(x)\right\| \le (1+ L_{\bar K_{T}}\Delta t) \left\|E_{k}(x)\right\| + B_K \Delta t^2, \qquad \text{for $k=0,1,\ldots$},
\end{equation}

\noindent
where $B_K>0$ is a constant. A discrete Gr\"{o}nwall argument then gives

\begin{equation}
\left\|E_{k+1}(x)\right\| \le e^{k L_{\bar K_{T}}\Delta t} \left\|E_{0}(x)\right\| + C_{K,1} \Delta t, \qquad \text{with } C_{K,1}=\frac{e^{L_{\bar K_{T}}T}-1}{L_{\bar K_{T}}} B_K.
\end{equation}

\noindent
Consequently, we get the $C^1$ error estimate

\begin{equation}
\label{eq:C^1 estimate}
\max_{0 \le k \le \lfloor T/\Delta t\rfloor}  \sup_{x\in K} \left\|D\Phi_{\Delta t}^{k}(x) - DF_{t_k,0}(x)\right\| \le C_{K,1} \Delta t.
\end{equation}

\item In order to estimate the error of higher-order derivatives, we argue by induction on $l \le r$. Let $J^{(l)}_k(x) := D^l\Phi_{\Delta t}^k(x)$ and $J^{(l)}(x,t) := D^lF_{t,0}(x)$, and let

\begin{equation}
\label{eq:Error of the l-th derivative}
E^{(l)}_k(x):=J^{(l)}_k(x)-J^{(l)}(x,t_k),  \qquad \text{for $k=1,2,\ldots$},
\end{equation}

\noindent
with $E^{(l)}_0:=0$. We proceed as in \eqref{eq:J_(k+1) from the chain rule} and \eqref{eq:J(t_(k+1)) from the chain rule}. By repeated differentiation and the Fa\`{a} di Bruno formula (see \cite[Lemma II.2.8]{HWODE1} and \cite{ConstantineSavits1996}), both $D^l \Phi^k_{\Delta t}$ and $D^lF_{t,0}$ satisfy recursion formulas of the form

\begin{align}
\label{eq:Recursion for higher-order derivatives}
J^{(l)}(x,t_{k+1}) &= DF_{t_{k+1},t_k}\big(F_{t_{k},0}(x)\big)\, J^{(l)}(x,t_k) + \mathcal{P}_l, \nonumber \\
J^{(l)}_{k+1}(x) &= D\hat F_{t_{k+1},t_k}(x_k)\, J^{(l)}_k(x) + \mathcal{\hat P}_l,
\end{align}

\noindent
where $\mathcal{P}_l$ and $\mathcal{\hat P}_l$ depend only on derivatives of order~$<l$. Subtracting the two recursions and using the consistency~\ref{ass:First-order consistency} and stability \ref{ass:stability}  assumptions as in \Cref{eq:E_(k+1) recursion}, yields

\begin{equation}
\left\|E^{(l)}_{k+1}(x)\right\| \le (1+ L_{\bar K_{T}}\Delta t) \left\|E^{(l)}_{k}(x)\right\| + B_{K,l} \Delta t^2, \qquad \text{for $k=0,1,\ldots$},
\end{equation}

\noindent
where the induction hypothesis controls the lower-order terms. Applying discrete Gr\"{o}nwall gives

\begin{equation}
\left\|E^{(l)}_{k+1}(x)\right\| \le C_{K,l} \Delta t.
\end{equation}

\noindent
Consequently, we get the $C^l$ error estimate

\begin{equation}
\label{eq:C^l estimate}
\max_{0 \le k \le \lfloor T/\Delta t\rfloor}  \sup_{x\in K} \left\|D^l\Phi_{\Delta t}^{k}(x) - D^lF_{t_k,0}(x)\right\| \le C_{K,l} \Delta t.
\end{equation}

\item Combining the estimates \eqref{eq:C^0 estimate}, \eqref{eq:C^1 estimate}, and \eqref{eq:C^l estimate} for all $l \le r$ yields the claim with $C_K = \max \{C_{K,0},\ldots,C_{K,r}\}$.
\end{enumerate}
\end{proof}

\paragraph{Remark.} Using the implicit function theorem, it is straightforward to verify that under Assumption~\ref{ass:standing} and for a sufficiently small time step $\Delta t$, both the Lagrange-d'Alembert-Euler~\eqref{eq: General Lagrange-d'Alembert Euler scheme} and Lie-Trotter~\eqref{eq:Lie-Trotter splitting method} integrators satisfy the assumptions of Theorem~\ref{thm:Convergence of first-order one-step methods}.

\subsection{Parametric time-dependent forced Hamiltonian systems}
\label{sec:Parametric time-dependent forced Hamiltonian systems}

In many practical applications in physics and engineering, the Hamiltonian $H:T^*Q\times \mathbb{R}\times I\longrightarrow \mathbb{R}$ and external forcing function $f_H:T^*Q\times \mathbb{R}\times I\longrightarrow T^*Q$ may depend on a parameter $\mu \in I$, where $I$ is an open subset of $\mathbb{R}^l$. The evolution of such a system is governed by the equations

\begin{align}
\label{eq: Parametric time-dependent forced Hamiltonian system}
\dot q = \frac{\partial H}{\partial p}(q,p,t,\mu), \qquad\qquad \dot p = -\frac{\partial H}{\partial q}(q,p,t,\mu)+f(q,p,t,\mu),
\end{align}

\noindent
and its flow $F_{t,t_0}:T^*Q\times I \longrightarrow T^*Q$ is parameter-dependent. We will refer to such flows as \emph{parametric Lagrange-d'Alembert flows}. In order to guarantee the existence, uniqueness, and suitable regularity of the parametric flow $F_{t,t_0}$, the conditions in Assumption~\ref{ass:standing} must also account for regularity with respect to the parameter.

\begin{assume}[{\bf Standing assumptions on parametric forced Hamiltonian systems}]
\label{ass:standing parametric}
Let $T>0$, let $r\ge 0$ be an integer, and let $I\subset \mathbb{R}^l$ be open. Throughout this work, the parameter-dependent Hamiltonian
$H: T^*Q \times [0,T]\times I \longrightarrow \mathbb{R}$ and the forcing term
$f: T^*Q \times [0,T]\times I \longrightarrow \mathbb{R}^n$ satisfy:
\begin{enumerate}[label=(\roman*), ref=(\roman*)]
    \item\label{ass:H-reg parametric} $H \in C^{r+2}(T^*Q\times [0,T]\times I)$ and $f \in C^{r+1}(T^*Q\times [0,T]\times I, \mathbb{R}^n)$;
		\item\label{ass:hyperregular parametric} $H$ is hyperregular, that is, for all $t \in [0,T]$ and $\mu \in I$, the fiber derivative
		\[
		\mathbb{F}H(\cdot,\cdot,t,\mu): T^*Q \ni (q,p) \longmapsto \bigg(q,\frac{\partial H}{\partial p}(q,p,t,\mu) \bigg) \in TQ
		\]
		is a diffeomorphism;
		\item\label{ass:global-flow parametric} For every $(q_0,p_0, \mu)\in T^*Q\times I$ and every $t_0\in[0,T]$, the solution of \eqref{eq: Parametric time-dependent forced Hamiltonian system} with the initial condition $(q(t_0),p(t_0))=(q_0,p_0)$ exists on the entire interval $[0,T]$. Equivalently, the associated flow map
\[
F_{t,t_0}:T^*Q\times I\longrightarrow T^*Q
\]
is well defined for all $t,t_0\in[0,T]$.
\end{enumerate}
\end{assume}

\noindent
Assumption~\ref{ass:standing parametric} guarantees the existence and uniqueness of the parametric flow $F_{t,t_0}$, which is jointly $C^{r+1}$ in all variables $(t,t_0,q,p,\mu)$ (see, e.g., \cite{CoddingtonLevinsonBook,HaleODE1969}). Moreover, all results from Sections~\ref{sec:Time-dependent forced Hamiltonian systems}-\ref{sec:Lagrange-d'Alembert maps} extend naturally to the parametric setting. In particular, we adopt the following definition.

\begin{define}[{\bf Parametric Lagrange-d'Alembert map}]
\label{thm: Definition of parametric Lagrange-d'Alembert maps}
Let $r\ge 0$ be an integer. A map $\varphi:T^*Q \times I \longrightarrow T^*Q$ is called a parametric Lagrange-d'Alembert map of class $C^{r+1}$ if there exists a parametric time-dependent forced Hamiltonian system \eqref{eq: Parametric time-dependent forced Hamiltonian system} satisfying Assumption~\ref{ass:standing parametric} for some $T>0$, with the associated parametric Lagrange-d'Alembert flow $F_{t,t_0}$, such that $\varphi = F_{T,0}$. The set of all parametric Lagrange-d'Alembert maps of class $C^{r+1}$ on $T^*Q\times I$ is denoted by $LdA^{r+1}(T^*Q,I)$.
\end{define}

The Lagrange-d'Alembert-Euler integrator \eqref{eq: General Lagrange-d'Alembert Euler scheme} becomes parameter-dependent and can be applied independently for each fixed values of $\mu \in I$. Furthermore, $C^r$ convergence of first-order parameter-dependent one-step methods $x_{k+1} = \hat F_{t_{k+1},t_k}(x_k,\mu)$ for the parametric initial value problem

\begin{equation}
\dot x(t) = g(x(t),t,\mu), \qquad x(0)=x,
\end{equation}

\noindent
on compact subsets $K\subset T^*Q\times I$ can be established by applying Theorem~\ref{thm:Convergence of first-order one-step methods} to the augmented non-parametric system

\begin{equation}
\begin{aligned}
\left\{
\begin{aligned}
\dot x(t) &= g(x(t),t, \mu(t)), \\
\dot \mu(t) &=0,
\end{aligned}
\right.
\qquad
\text{with} \qquad
\left\{
\begin{aligned}
x(0) &= x, \\
\mu(0) &= \mu,
\end{aligned}
\right.
\end{aligned}
\end{equation}

\noindent
and the augmented first-order one-step scheme $(x_{k+1}, \mu_{k+1}) = (\hat F_{t_{k+1},t_k}(x_k,\mu_k), \mu_k)$.


\section{Generalized Forced Hamiltonian Neural Networks}
\label{sec: Generalized Forced Hamiltonian Neural Networks}

In this section, we consider autonomous forced Hamiltonian systems and propose a general structure-preserving neural network architecture designed to learn their flow.

\subsection{Autonomous forced Hamiltonian systems}
\label{sec: Autonomous forced Hamiltonian systems}

The evolution of an autonomous forced Hamiltonian system is governed by the differential equations

\begin{align}
\label{eq: Autonomous forced Hamiltonian system}
\dot q = \frac{\partial H}{\partial p}(q,p), \qquad\qquad \dot p = -\frac{\partial H}{\partial q}(q,p) + f(q,p),
\end{align}

\noindent
where $H: T^*Q \longrightarrow \mathbb{R}$ is the Hamiltonian function and $f_H: T^*Q \longrightarrow T^*Q$, $f_H(q,p) = (q, f(q,p))$ is the external forcing. For autonomous systems, neither $H$ nor $f$ explicitly depend on time. Consequently, the Lagrange-d'Alembert flow $F_{t,t_0}$ depends only on the difference $t-t_0$, and will therefore be denoted by $F_t$. Similarly, the generating function and exact discrete forces \eqref{eq: Generating function and exact discrete forces} depend on the difference $t_b-t_a$ rather than the general times $t_a$ and $t_b$. After fixing a time step $\Delta t$, the map $F_{\Delta t}$ is a Lagrange-d'Alembert map in the sense of Definition~\ref{thm: Definition of Lagrange-d'Alembert maps}. Our goal is to design a structure-preserving neural network that approximates $F_{\Delta t}$ using discrete data sampled from trajectories of the system~\eqref{eq: Autonomous forced Hamiltonian system}.

For separable and time-independent Hamiltonians $H(q,p)=T(p)+U(q)$, the Lagrange-d'Alembert-Euler integrator \eqref{eq: General Lagrange-d'Alembert Euler scheme} becomes explicit. In particular, setting $\Delta t=1$, we obtain mappings of the form

\begin{align}
\label{eq: Explicit Lagrange-d'Alembert Euler scheme}
q_{k+1} &= q_k + \frac{\partial T}{\partial p}(p_k), \nonumber \\
p_{k+1} &= p_k-\frac{\partial U}{\partial q}(q_{k+1})+ f(q_{k+1},p_k),
\end{align}

\noindent
which we refer to as \emph{Lagrange-d'Alembert-Euler maps}, and denote them by the shorthand $LDE_{T,U,f}(q_k,p_k)$. We further denote by

\begin{align}
\label{eq: Definition of LDE^r maps}
LDE^r(T^*Q) = \Big\{ LDE_{T,U,f} \,\Big |\, T, U\in C^{r+1}(\mathbb{R}^n), f \in C^r(\mathbb{R}^{2n},\mathbb{R}^n) \Big\}
\end{align}

\noindent
the set of all such mappings of class $C^r$. The Lagrange-d'Alembert-Euler map \eqref{eq: Explicit Lagrange-d'Alembert Euler scheme} will serve as a building block of our structure-preserving neural network.

\subsection{Structure-preserving neural network architecture}
\label{sec: Structure-preserving neural network architecture}

A structure-preserving neural network for autonomous forced Hamiltonian systems can be constructed by extending the architecture of Generalized Hamiltonian Neural Networks (GHNNs), which were proposed in \cite{HornKorenGHNN} and defined as a concatenation of symplectic integrators. For each stage of these symplectic integrators, the Hamiltonian is constrained to be separable, $H(q,p)=T(p)+U(q)$, and the kinetic and potential components, $T(p)$ and $U(q)$, are modeled by neural networks $\tilde T(p;\theta_1)$ and $\tilde U(q;\theta_2)$, respectively, that is,

\begin{align}
\label{eq:separable_Hamiltonian}
\tilde H(q, p; \theta_1,\theta_2) = \tilde T(p;\theta_1) + \tilde U(q;\theta_2),
\end{align}

\begin{figure}
	\begin{tikzpicture}[
    node distance=5mm and 8mm,
    state_box/.style={rectangle, rounded corners, minimum height=2cm, minimum width=1cm, very thick, text centered, font=\Large},
    red_box/.style={state_box, fill=mred!25, draw=mred!80!black},
    green_box/.style={state_box, fill=mgreen!50, draw=mgreen!60!black},
    blue_node/.style={circle, draw=mblue!50!black, fill=mblue!80, minimum size=5mm},
    special_node/.style={blue_node, fill=mblue!40, minimum size=6mm},
    green_bar/.style={rectangle, rounded corners, draw=mgreen!70!black, fill=mgreen!50, very thick, minimum height=1.8cm, minimum width=4mm},
    si_node/.style={rectangle, rounded corners, draw=mgreen!70!black, fill=mgreen!50, very thick, inner sep=2.5pt, font=\small\bfseries},
    nabla_node/.style={font=\huge}, 
    connector/.style={draw, thick},
    dot_connector/.style={draw, very thick, line cap=round}
]

\tikzset{
    pics/nn_module/.style={
        code={
            \foreach \row in {1,2,4,5} {
                \foreach \col in {1,2} {
                    \node[blue_node] (-node-\row-\col) at (-2+\col*0.8, 2.4-\row*0.8) {};
                }
            }
            \foreach \row in {1,2,4,5} {
                \draw[dot_connector] (-node-\row-1.east) -- (-node-\row-2.west);
                \fill (-node-\row-1.east) circle (1.2pt);
                \fill (-node-\row-2.west) circle (1.2pt);
            }
            \foreach \col in {1,2} {
                 \draw[dot_connector] (-node-2-\col.south) -- (-node-4-\col.north);
                 \draw[dot_connector] (-node-2-\col.north) -- (-node-1-\col.south);
                 \draw[dot_connector] (-node-5-\col.north) -- (-node-4-\col.south);
                 \fill (-node-2-\col.south) circle (1.2pt);
                 \fill (-node-4-\col.north) circle (1.2pt);
                 \fill (-node-2-\col.north) circle (1.2pt);
                 \fill (-node-1-\col.south) circle (1.2pt);
                 \fill (-node-4-\col.south) circle (1.2pt);
                 \fill (-node-5-\col.north) circle (1.2pt);
            }
            
            \node[special_node, fill=mgreen!50] (-H) at (.5, 0) {H};
            \node[special_node, above of=-H, yshift=+1.5em, xshift=-.2em] (-T) {T};
            \node[special_node, below of=-H, yshift=-1.5em, xshift=-.2em] (-U) {U};
            
            \draw[connector] (-node-2-2) -- (-T);
            \draw[connector] (-node-1-2) -- (-T);
            \draw[connector] (-node-4-2) -- (-U);
            \draw[connector] (-node-5-2) -- (-U);
            \draw[connector] (-H) -- (-U);
            \draw[connector] (-T) -- (-H);
        }
    }
}


\node[red_box] (state0) {$\begin{matrix}q_n \\ p_n\end{matrix}$};

\pic[right=1.5cm of state0] (nn1) {nn_module};
\node[green_bar, right=.5cm of nn1-H] (bar1) {$\nabla$};
\node[nabla_node, right=1mm of bar1] (nabla1) {};
\node[si_node, right=1mm of nabla1] (si1) {SI};
\node[green_box, right=of si1] (state1) {$\begin{matrix}\hat{q}_1 \\ \hat{p}_1\end{matrix}$};

\pic[right=1.5cm of state1] (nn2) {nn_module};
\node[green_bar, right=.5cm of nn2-H] (bar2) {$\nabla$};
\node[nabla_node, right=1mm of bar2] (nabla2) {};
\node[si_node, right=1mm of nabla2] (si2) {SI};
\node[green_box, right=of si2] (state2) {$\begin{matrix}\hat{q}_2 \\ \hat{p}_2\end{matrix}$};

\node[right=.3cm of state2, font=\Huge] (dots) {\dots};
\node[si_node, right=of dots, xshift=-2em] (si_final) {SI};
\node[red_box, right=of si_final] (state_final) {$\begin{matrix}q_{n+1} \\ p_{n+1}\end{matrix}$};

\draw[connector] ($(state0.north east)!0.5!(state0.east)$) -- (nn1-node-1-1.west);
\draw[connector] ($(state0.south east)!0.5!(state0.east)$) -- (nn1-node-5-1.west);

\draw[connector] (nn1-H) -- (bar1);
\draw[connector] (bar1) -- (si1); 
\coordinate (fork1) at ($(si1.east)!0.5!(state1.west)$);
\draw[connector] (si1.east) -- (fork1);
\coordinate (upper_coordinate1) at ($(state1.north west)!0.5!(state1.west)$);
\coordinate (lower_coordinate1) at ($(state1.south west)!0.5!(state1.west)$);
\draw[connector] (fork1) -- (upper_coordinate1);
\draw[connector] (fork1) -- (lower_coordinate1);

\draw[connector] ($(state1.north east)!0.5!(state1.east)$) -- (nn2-node-1-1.west);
\draw[connector] ($(state1.south east)!0.5!(state1.east)$) -- (nn2-node-5-1.west);

\draw[connector] (nn2-H) -- (bar2);
\draw[connector] (bar2) -- (si2); 
\coordinate (fork2) at ($(si2.east)!0.5!(state2.west)$);
\draw[connector] (si2.east) -- (fork2);
\coordinate (upper_coordinate2) at ($(state2.north west)!0.5!(state2.west)$);
\coordinate (lower_coordinate2) at ($(state2.south west)!0.5!(state2.west)$);
\draw[connector] (fork2) -- (upper_coordinate2);
\draw[connector] (fork2) -- (lower_coordinate2);

\draw[connector] (dots) -- (si_final);
\coordinate (fork_final) at ($(si_final.east)!0.5!(state_final.west)$);
\draw[connector] (si_final.east) -- (fork_final);
\coordinate (upper_coordinate_final) at ($(state_final.north west)!0.5!(state_final.west)$);
\coordinate (lower_coordinate_final) at ($(state_final.south west)!0.5!(state_final.west)$);
\draw[connector] (fork_final) -- (upper_coordinate_final);
\draw[connector] (fork_final) -- (lower_coordinate_final);

\end{tikzpicture}
	\caption{Schematic representation of a Generalized Hamiltonian Neural Network (GHNN). The architecture consists of a concatenation of symplectic integrator blocks, where each stage is parameterized by separable Hamiltonians with learned kinetic and potential energy components. This figure has been reconstructed to match \cite{HornKorenGHNN}.}
	\label{fig:GHNN}
\end{figure}

\begin{figure}
	\begin{tikzpicture}[
    node distance=5mm and 8mm,
    state_box/.style={rectangle, rounded corners, minimum height=2cm, minimum width=1cm, very thick, text centered, font=\Large},
    red_box/.style={state_box, fill=mred!25, draw=mred!80!black},
    red_small_box/.style={red_box, minimum height=.4cm},
    green_box/.style={state_box, fill=mgreen!50, draw=mgreen!60!black},
    green_small_box/.style={green_box, minimum height=.4cm},
    blue_node/.style={circle, draw=mblue!50!black, fill=mblue!80, minimum size=5mm},
    blue_small_node/.style={circle, draw=mblue!50!black, fill=mblue!80, minimum size=2mm},
    special_node/.style={blue_node, fill=mblue!40, minimum size=6mm},
    green_bar/.style={rectangle, rounded corners, draw=mgreen!70!black, fill=mgreen!50, very thick, minimum height=1.8cm, minimum width=4mm},
    si_node/.style={rectangle, rounded corners, draw=mgreen!70!black, fill=mgreen!50, very thick, inner sep=2.5pt, font=\small\bfseries},
    nabla_node/.style={font=\huge}, 
    connector/.style={draw, thick},
    arrow/.style={draw, thick, -stealth},
    dot_connector/.style={draw, very thick, line cap=round}
]

\tikzset{
    pics/nn_module/.style={
        code={
            \foreach \row in {1,2,4,5} {
                \foreach \col in {1,2} {
                    \node[blue_node] (-node-\row-\col) at (-2+\col*0.8, 2.4-\row*0.8) {};
                }
            }
            \foreach \row in {1,2,4,5} {
                \draw[dot_connector] (-node-\row-1.east) -- (-node-\row-2.west);
                \fill (-node-\row-1.east) circle (1.2pt);
                \fill (-node-\row-2.west) circle (1.2pt);
            }
            \foreach \col in {1,2} {
                 \draw[dot_connector] (-node-2-\col.south) -- (-node-4-\col.north);
                 \draw[dot_connector] (-node-2-\col.north) -- (-node-1-\col.south);
                 \draw[dot_connector] (-node-5-\col.north) -- (-node-4-\col.south);
                 \fill (-node-2-\col.south) circle (1.2pt);
                 \fill (-node-4-\col.north) circle (1.2pt);
                 \fill (-node-2-\col.north) circle (1.2pt);
                 \fill (-node-1-\col.south) circle (1.2pt);
                 \fill (-node-4-\col.south) circle (1.2pt);
                 \fill (-node-5-\col.north) circle (1.2pt);
            }

            \node[special_node, fill=mgreen!50] (-H) at (.5, 0) {H};
            \node[special_node, above of=-H, yshift=+1.5em, xshift=-.2em] (-T) {T};
            \node[special_node, below of=-H, yshift=-1.5em, xshift=-.2em] (-U) {U};

            \draw[connector] (-node-2-2) -- (-T);
            \draw[connector] (-node-1-2) -- (-T);
            \draw[connector] (-node-4-2) -- (-U);
            \draw[connector] (-node-5-2) -- (-U);
            \draw[connector] (-H) -- (-U);
            \draw[connector] (-T) -- (-H);
        }
    },
    pics/fnn_module/.style={
        code={
            \foreach \row in {1,2,3} {
                \foreach \col in {1,2} {
                    \node[blue_small_node] (-node-\row-\col) at (-2+\col*0.8, 2.4-\row*0.8) {};
                }
            }
            \foreach \row in {1,2,3} {
                \draw[dot_connector] (-node-\row-1.east) -- (-node-\row-2.west);
                \fill (-node-\row-1.east) circle (1.2pt);
                \fill (-node-\row-2.west) circle (1.2pt);
                \foreach \rowt in {1,2,3} {
                \draw[dot_connector] (-node-\row-1.east) -- (-node-\rowt-2.west);
                }
            }
            \fill (-node-1-1.east) circle (1.2pt);
            \fill (-node-2-1.east) circle (1.2pt);
            \fill (-node-3-1.east) circle (1.2pt);
            \fill (-node-1-2.west) circle (1.2pt);
            \fill (-node-2-2.west) circle (1.2pt);
            \fill (-node-3-2.west) circle (1.2pt);

            \node[special_node, fill=mpurple!50, right=.5em of -node-2-2] (-f) {f};

            \draw[connector] (-node-1-2) -- (-f);
            \draw[connector] (-node-2-2) -- (-f);
            \draw[connector] (-node-3-2) -- (-f);
        }
    }
}


\node[red_box] (state0) {$\begin{matrix}q_n \\ p_n\end{matrix}$};

\pic[right=1.5cm of state0] (nn1) {nn_module};
\node[green_bar, right=.5cm of nn1-H] (bar1) {$\nabla$};
\node[nabla_node, right=1mm of bar1] (nabla1) {};
\node[si_node, right=6mm of nabla1] (si1) {LDE};
\node[green_box, right=of si1] (state1) {$\begin{matrix}\hat{q}_1 \\ \hat{p}_1\end{matrix}$};
\pic[below of=nabla1, yshift=-8em, xshift=2em] (fnn1) {fnn_module};
\coordinate (fnn_fork1) at ($(fnn1-node-2-1)-(.6,0)$);

\pic[right=1.5cm of state1] (nn2) {nn_module};
\node[green_bar, right=.5cm of nn2-H] (bar2) {$\nabla$};
\node[nabla_node, right=1mm of bar2] (nabla2) {};
\node[si_node, right=6mm of nabla2] (si2) {LDE};
\node[green_box, right=of si2] (state2) {$\begin{matrix}\hat{q}_2 \\ \hat{p}_2\end{matrix}$};
\pic[below of=nabla2, yshift=-8em, xshift=2em] (fnn2) {fnn_module};
\coordinate (fnn_fork2) at ($(fnn2-node-2-1)-(.6,0)$);

\node[right=.3cm of state2, font=\Huge] (dots) {\dots};
\node[si_node, right=of dots, xshift=-2em] (si_final) {LDE};
\node[red_box, right=of si_final] (state_final) {$\begin{matrix}q_{n+1} \\ p_{n+1}\end{matrix}$};
\pic[below of=si_final, yshift=-8em, xshift=-1em] (fnn_final) {fnn_module};
\coordinate (fnn_fork_final) at ($(fnn_final-node-2-1)-(.6,0)$);

\draw[connector] ($(state0.north east)!0.5!(state0.east)$) -- (nn1-node-1-1.west);
\draw[connector] ($(state0.south east)!0.5!(state0.east)$) -- (nn1-node-5-1.west);

\draw[connector] (nn1-H) -- (bar1);
\draw[connector] (bar1) -- (si1);
\coordinate (fork1) at ($(si1.east)!0.5!(state1.west)$);
\draw[connector] (si1.east) -- (fork1);
\coordinate (upper_coordinate1) at ($(state1.north west)!0.5!(state1.west)$);
\coordinate (lower_coordinate1) at ($(state1.south west)!0.5!(state1.west)$);
\draw[connector] (fork1) -- (upper_coordinate1);
\draw[connector] (fork1) -- (lower_coordinate1);
\draw[connector] (fnn1-f) -- (si1.south);
\draw[connector] (state0.south) |- (fnn_fork1);
\draw[connector] (fnn_fork1) -- (fnn1-node-1-1);
\draw[connector] (fnn_fork1) -- (fnn1-node-2-1);
\draw[connector] (fnn_fork1) -- (fnn1-node-3-1);

\draw[connector] ($(state1.north east)!0.5!(state1.east)$) -- (nn2-node-1-1.west);
\draw[connector] ($(state1.south east)!0.5!(state1.east)$) -- (nn2-node-5-1.west);

\draw[connector] (nn2-H) -- (bar2);
\draw[connector] (bar2) -- (si2);
\coordinate (fork2) at ($(si2.east)!0.5!(state2.west)$);
\draw[connector] (si2.east) -- (fork2);
\coordinate (upper_coordinate2) at ($(state2.north west)!0.5!(state2.west)$);
\coordinate (lower_coordinate2) at ($(state2.south west)!0.5!(state2.west)$);
\draw[connector] (fork2) -- (upper_coordinate2);
\draw[connector] (fork2) -- (lower_coordinate2);
\draw[connector] (fnn2-f) -- (si2.south);
\draw[connector] (state1.south) |- (fnn_fork2);
\draw[connector] (fnn_fork2) -- (fnn2-node-1-1);
\draw[connector] (fnn_fork2) -- (fnn2-node-2-1);
\draw[connector] (fnn_fork2) -- (fnn2-node-3-1);

\draw[connector] (dots) -- (si_final);
\coordinate (fork_final) at ($(si_final.east)!0.5!(state_final.west)$);
\draw[connector] (si_final.east) -- (fork_final);
\coordinate (upper_coordinate_final) at ($(state_final.north west)!0.5!(state_final.west)$);
\coordinate (lower_coordinate_final) at ($(state_final.south west)!0.5!(state_final.west)$);
\draw[connector] (fork_final) -- (upper_coordinate_final);
\draw[connector] (fork_final) -- (lower_coordinate_final);
\draw[connector] (fnn_final-f) -- (si_final.south);

\end{tikzpicture}
	\caption{Schematic representation of a Generalized Forced Hamiltonian Neural Network (GFHNN), extending the GHNN architecture (see \Cref{fig:GHNN}) to incorporate external forcing. The model combines learned Hamiltonian dynamics with additional terms representing non-conservative forces.}
	\label{fig:GFHNN}
\end{figure}

\noindent
where $\theta_1$ and $\theta_2$ denote the trainable parameters of $\tilde T$ and $\tilde U$, respectively. Throughout this work, neural network approximations are denoted by a tilde, and the dependence on trainable parameters is omitted whenever it is not relevant. In the simplest case, $\tilde T$ and $\tilde U$ are modeled using multilayer perceptrons \cite{goodfellow2016deep}. GHNNs are visualized in \Cref{fig:GHNN}. In this sense, GHNNs provide a unifying perspective encompassing other structure-preserving architectures, including SympNets \cite{JinZhangZhuTangKarniadakis2020} and H\'enonNets \cite{BurbyHenonNets2020}.

We propose a Generalized Forced Hamiltonian Neural Network (GFHNN) architecture, defined as a concatenation of Lagrange-d'Alembert-Euler maps \eqref{eq: Explicit Lagrange-d'Alembert Euler scheme}. Each map includes a separable Hamiltonian and a forcing term, where the kinetic and potential energies, as well as the forcing term, are parametrized by multilayer perceptrons. Using notation similar to that in \cite{HornKorenGHNN}, a GFHNN can be expressed as

\begin{align}
\label{eq: GFHNN as a concatenation}
GFHNN(q,p;\theta) &= LDE_{\tilde T_m(\cdot;\vartheta_m), \tilde U_m(\cdot;\psi_m), \tilde f_m(\cdot,\cdot;\chi_m)}\circ \ldots \circ LDE_{\tilde T_1(\cdot;\vartheta_1), \tilde U_1(\cdot;\psi_1), \tilde f_1(\cdot,\cdot;\chi_1)} (q,p),
\end{align}

\noindent
where $\theta=(\vartheta_1,\psi_1,\chi_1,\vartheta_2,\psi_2,\chi_2,\ldots)$ collects all the trainable parameters $\vartheta_i$, $\psi_i$, $\chi_i$ of the neural networks $\tilde T_i$, $\tilde U_i$, $\tilde f_i$, respectively, for $i=1,\ldots,m$. A schematic representation of a GFHNN is shown in \Cref{fig:GFHNN}. Given a set of training data $((q^a_i,p^a_i),(q^b_i,p^b_i))$ for $i=1,\ldots,d$ such that $(q^b_i,p^b_i)=F_{\Delta t}(q^a_i,p^a_i)$, the neural network \eqref{eq: GFHNN as a concatenation} can be trained by minimizing the mean squared loss,

\begin{align}
\label{eq:Loss function for GFHNN}
\text{Loss}(\theta) = \frac{1}{2nd}\sum_{i=1}^d \big\| (q^b_i,p^b_i) - GFHNN(q^a_i,p^a_i;\theta) \big\|^2,
\end{align}

\noindent
using a suitable optimization algorithm.

\subsection{Universal approximation theorem for GFHNNs}
\label{sec: Universal approximation theorem for GFHNNs}

We now establish a universal approximation theorem for the GFHNN architecture, demonstrating that it can approximate the flow map $F_{\Delta t}$ arbitrarily well in the strong $C^r$ topology on compact sets. We begin by showing the following two propositions.

\begin{prop}
\label{thm:Proposition on approximating phi by Lie-Trotter}
Given a Lagrange-d'Alembert map $\varphi \in LdA^{r+1}(T^*Q)$ for some integer $r\ge 0$, there exists a time-dependent forced Hamiltonian system satisfying Assumption~\ref{ass:standing} for some $T>0$ such that for every compact set $K \subset T^*Q$ and every $\epsilon > 0$, there exists $N\in\mathbb{N}$ such that

\begin{equation}
\label{eq:Approximation of phi by Lie-Trotter}
\left\| \varphi - \Phi^{LT}_{t_N, t_{N-1}} \circ \cdots \circ \Phi^{LT}_{t_{1}, t_0} \right\|_{C^r(K)} < \epsilon,
\end{equation}

\noindent
where $\Phi^{LT}_{t_{k+1}, t_k}$ denotes the Lie-Trotter integrator \eqref{eq:Lie-Trotter splitting method} with $t_k=k\Delta t$ and $\Delta t = T/N$.
\end{prop}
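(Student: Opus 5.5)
The plan is to take the time-dependent forced Hamiltonian system that exists by the very definition of $\varphi$, and to feed it into Theorem~\ref{thm:Convergence of first-order one-step methods}. Since $\varphi\in LdA^{r+1}(T^*Q)$, Definition~\ref{thm: Definition of Lagrange-d'Alembert maps} provides a Hamiltonian $H$ and a forcing $f$ satisfying Assumption~\ref{ass:standing} for some $T>0$, with Lagrange-d'Alembert flow $F_{t,t_0}$ such that $\varphi=F_{T,0}$. We choose exactly this system in the statement. Write $x=(q,p)\in\mathbb{R}^{2n}$ and set $g(x,t)=\big(\partial_pH,\,-\partial_qH+f\big)$. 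Then Assumption~\ref{ass:standing}~\ref{ass:H-reg} gives $g\in C^{r+1}(\mathbb{R}^{2n}\times[0,T],\mathbb{R}^{2n})$, and Assumption~\ref{ass:standing}~\ref{ass:global-flow} ensures that the flow is defined on $[0,T]$. These are the hypotheses on the ODE required by Theorem~\ref{thm:Convergence of first-order one-step methods}.

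Next, fix a compact $K$ and take $\Delta t=T/N$, so that $t_N=T$ and $\lfloor T/\Delta t\rfloor=N$. The one-step method is $\hat F_{t_{k+1},t_k}=\Phi^{LT}_{t_{k+1},t_k}=\Phi^f_{t_{k+1},t_k}\circ\Phi^H_{t_{k+1},t_k}$.

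We must verify consistency~\ref{ass:First-order consistency} and stability~\ref{ass:stability}, with constants that are uniform for $(x,t)$ in compact sets and for $\Delta t$ small. The Remark following Theorem~\ref{thm:Convergence of first-order one-step methods} asserts this. To back it up, we would argue as follows.

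\begin{itemize}
\item \emph{The maps are well defined and smooth.} The map $\Phi^H$ is defined implicitly through $q_{k+1}=q_k+\Delta t\,\partial_pH(q_{k+1},p_k,t_k)$. For $\Delta t$ small, the implicit function theorem gives a unique solution on a neighbourhood of any compact set. It is $C^{r+1}$ in $(x,\Delta t)$, because $\partial_pH\in C^{r+1}$, and the invertibility factor is $I-\Delta t\,\partial_q\partial_pH$. The map $\Phi^f$ is defined the same way through $p_{k+1}=p_k+\Delta t\,f(q_k,p_k,t_k)$; since $\dot q=0$ in this subsystem, $q_{k+1}=q_k$, so $\Phi^f$ is in fact explicit and of class $C^{r+1}$.
\item \emph{Taylor expansion.} Both the composition and the exact flow $F_{t+\Delta t,t}$ admit the expansion $x+\Delta t\,g(x,t)+R$ with $R=O(\Delta t^2)$ in $C^r(K)$, uniformly in $t\in[0,T]$. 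For $F$ this follows from joint $C^{r+1}$ regularity and Taylor's theorem in $\Delta t$ with integral remainder, after differentiating $l\le r$ times in $x$. For the composition it follows from expanding the implicit solutions in $\Delta t$.
\item \emph{Consistency and stability.} Subtracting the two expansions gives~\ref{ass:First-order consistency}. Stability~\ref{ass:stability} follows from $D\hat F=I+O(\Delta t)$ uniformly on $K\times[0,T]$.
\end{itemize}

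This verification, and especially the uniformity of the $O(\Delta t^2)$ bounds for all $l\le r$ derivatives, is the main technical obstacle. The delicate point is that the remainder of order $\Delta t^2$ needs one more derivative in $t$ or in $\Delta t$ than is being measured in $x$. I would handle it by viewing both maps as $C^{r+1}$ functions of $(x,t,\Delta t)$ and using the mixed partials $\partial_{\Delta t}^2D_x^l$.

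For the $C^{r+1}$ regularity in $\Delta t$ of the implicit map $\Phi^H$, a possible loss of regularity appears if $r+2$ derivatives are required. If that happens, I would fall back on the integral form $F_{t+\Delta t,t}(x)-x-\Delta t\,g(x,t)=\int_0^{\Delta t}\big(g(F_{t+s,t}(x),t+s)-g(x,t)\big)\,ds$. I would estimate this remainder directly in $C^r$ using Lipschitz bounds on $D^lg$, whose derivatives of order $l\le r$ are $C^1$. For $\Phi^H$ I would use the analogous representation derived from its defining equation.

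With the hypotheses in place, Theorem~\ref{thm:Convergence of first-order one-step methods} gives a constant $C_K$ with $\|\Phi^{LT}_{t_N,t_{N-1}}\circ\cdots\circ\Phi^{LT}_{t_1,t_0}-F_{t_N,0}\|_{C^r(K)}\le C_KT/N$. Since $F_{t_N,0}=F_{T,0}=\varphi$, choosing $N>C_KT/\epsilon$, and large enough that $\Delta t$ falls in the admissible small range, yields \eqref{eq:Approximation of phi by Lie-Trotter}.
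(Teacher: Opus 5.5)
Your proposal is correct and follows the paper's proof. Like the paper, you take the system furnished by Definition~\ref{thm: Definition of Lagrange-d'Alembert maps}, use the Remark that the Lie--Trotter scheme satisfies the hypotheses of Theorem~\ref{thm:Convergence of first-order one-step methods}, apply that theorem, and choose $N$ with $C_K T/N<\epsilon$ (the paper takes $N=\lceil C_K T/\epsilon\rceil+1$). Beyond the paper, you sketch the consistency and stability verification (including the integral-remainder treatment of the $l=r$ derivatives) and require $N$ large enough that $\Delta t$ lies in the admissible range; both are details the paper leaves implicit.
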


\begin{proof}
By Definition~\ref{thm: Definition of Lagrange-d'Alembert maps}, we have $\varphi = F_{T,0}$, where $F_{t,t_0}$ is the flow of a time-dependent forced Hamiltonian system satisfying Assumption~\ref{ass:standing}. The Lie-Trotter integrator \eqref{eq:Lie-Trotter splitting method} is first-order accurate (see, e.g., \cite{HLWGeometric}), and as noted in Section~\ref{sec: Lagrange-d'Alembert integrators}, it satisfies the assumptions of Theorem~\ref{thm:Convergence of first-order one-step methods}. Let $\epsilon > 0$ and let $K \subset T^*Q$ be compact. Then the estimate \eqref{eq:Global error estimate of a one-step method} holds. Define

\begin{equation}
N\coloneq\bigg\lceil \frac{C_K T}{\epsilon} \bigg\rceil +1 \qquad \text{and} \qquad \Delta t \coloneq \frac{T}{N}.
\end{equation}

\noindent
Then the estimate \eqref{eq:Global error estimate of a one-step method} implies \eqref{eq:Approximation of phi by Lie-Trotter}.\\
\end{proof}

\begin{prop}
\label{thm:Proposition on approximating phi by LDE}
Given a Lagrange-d'Alembert map $\varphi \in LdA^{r+1}(T^*Q)$ for some integer $r\ge 0$, for every compact set $K \subset T^*Q$ and every $\epsilon > 0$, there exists a finite sequence of maps $LDE_{T_i,U_i,f_i} \in LDE^{r+1}(T^*Q)$, $i = 1,\ldots,m$, such that

\begin{equation}
\label{eq:Approximating phi by LDE}
\left\| \varphi - LDE_{T_m,U_m,f_m} \circ \cdots \circ LDE_{T_1,U_1,f_1} \right\|_{C^r(K)} < \epsilon.
\end{equation}
\end{prop}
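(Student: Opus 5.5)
The plan is to combine Proposition~\ref{thm:Proposition on approximating phi by Lie-Trotter} with two observations: the forcing half-step of the Lie-Trotter scheme is already a Lagrange-d'Alembert-Euler map, and the Hamiltonian half-step can be approximated by compositions of Lagrange-d'Alembert-Euler maps with vanishing forcing. Fix $\varphi\in LdA^{r+1}(T^*Q)$, a compact set $K$ and $\epsilon>0$. By Proposition~\ref{thm:Proposition on approximating phi by Lie-Trotter} there is an $N$ such that $\Psi:=\Phi^{LT}_{t_N,t_{N-1}}\circ\cdots\circ\Phi^{LT}_{t_1,t_0}$ satisfies $\|\varphi-\Psi\|_{C^r(K)}<\epsilon/2$. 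It then suffices to approximate $\Psi$ within $\epsilon/2$ in $C^r(K)$ by a finite composition of maps in $LDE^{r+1}(T^*Q)$.

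\textbf{Forcing half-steps.} The forcing step $\Phi^f_{t_{k+1},t_k}$ is
\begin{equation*}
(q,p)\longmapsto\big(q,\;p+\Delta t\, f(q,p,t_k)\big).
\end{equation*}
This is exactly $LDE_{0,0,f_k}$ with $f_k:=\Delta t\, f(\cdot,\cdot,t_k)$. Since $q_{k+1}=q_k$, evaluating $f$ at $(q_{k+1},p_k)$ agrees with evaluating it at $(q_k,p_k)$. By Assumption~\ref{ass:standing}~\ref{ass:H-reg}, $f_k\in C^{r+1}$, so $LDE_{0,0,f_k}\in LDE^{r+1}(T^*Q)$, and no approximation is needed for these factors.

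\textbf{Hamiltonian half-steps.} The Hamiltonian step $\Phi^H_{t_{k+1},t_k}$ is the implicit symplectic Euler map for a general, non-separable $H(\cdot,\cdot,t_k)$, so it is not itself an LDE map. Here I would replace it by a product of elementary shears. First, $\Phi^H_{t_{k+1},t_k}$ differs from the exact Hamiltonian flow of the frozen Hamiltonian $H(\cdot,\cdot,t_k)$ over time $\Delta t$ only by $O(\Delta t^2)$ in $C^r$ on compacts. Running the argument of Proposition~\ref{thm:Proposition on approximating phi by Lie-Trotter} with exact sub-flows in place of the Euler sub-steps therefore lets us assume that each Hamiltonian factor is a Hamiltonian diffeomorphism. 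Second, I would invoke the universal approximation results for GHNNs/SympNets/H\'enonNets (\cite{HornKorenGHNN,JinZhangZhuTangKarniadakis2020,BurbyHenonNets2020}, ultimately resting on Turaev's theorem on H\'enon-like maps). These give approximations of such maps in $C^r$ on compacts by compositions of the shears
\begin{equation*}
(q,p)\mapsto(q+\nabla T(p),p),\qquad (q,p)\mapsto(q,p-\nabla U(q)).
\end{equation*}
These shears are $LDE_{T,0,0}$ and $LDE_{0,U,0}$, respectively. Since the functions $T$ and $U$ produced by these results are smooth (or can be mollified), the shears lie in $LDE^{r+1}(T^*Q)$. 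Alternatively, one could split the frozen Hamiltonian flow directly by a Strang/Lie splitting into separable pieces, but for non-separable $H$ this still requires the cited symplectic approximation result, which I expect to be the main external ingredient.

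\textbf{Assembling the errors.} The remaining step is to pass from factor-wise approximations to the full composition, and I expect the bookkeeping here to be the main obstacle. I would use the standard lemma that composition $(A,B)\mapsto A\circ B$ is continuous in the $C^r$ topology on compact sets. If $B_j\to B$ in $C^r$ on a compact neighbourhood $K'$ of $K$, and $A_j\to A$ in $C^r$ on a compact neighbourhood of $B(K')$, then $A_j\circ B_j\to A\circ B$ in $C^r(K)$. This follows from the Fa\`a di Bruno formula and uniform continuity of the derivatives of $A$ on compacts. Because $\Psi$ has only finitely many ($2N$) factors, I would choose the tolerances backwards from the outermost factor. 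Let $K_0:=K$ and let $K_j$ be a compact neighbourhood of the image of $K_{j-1}$ under the first $j$ exact factors. Pick each approximant's tolerance small enough that the perturbed partial composition stays inside $K_j$ and the accumulated $C^r$ error is below $\epsilon/(2\cdot 2N)$ per factor. The resulting composition of LDE maps then lies within $\epsilon/2$ of $\Psi$ in $C^r(K)$. Combined with the first step, this gives \eqref{eq:Approximating phi by LDE}.
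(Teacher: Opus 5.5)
Your proposal is correct and follows essentially the paper's route: the Lie--Trotter approximation, the observation that $\Phi^f_{t_{k+1},t_k}=LDE_{0,0,\Delta t f(\cdot,\cdot,t_k)}$ holds exactly, and Turaev's theorem to approximate the Hamiltonian half-step by compositions of shears. (The paper goes through H\'enon-like maps and writes each one exactly as a composition of four LDE maps.) Your detour through exact frozen Hamiltonian flows is unnecessary, since the symplectic Euler map $\Phi^H_{t_{k+1},t_k}$ is itself a $C^{r+1}$ symplectic map to which Turaev's result applies directly; on the other hand, your composition-continuity bookkeeping supplies a step the paper leaves implicit.
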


\begin{proof}
Let $\varphi \in LdA^{r+1}(T^*Q)$. By Proposition~\ref{thm:Proposition on approximating phi by Lie-Trotter}, $\varphi$ can be approximated on compact sets with arbitrary accuracy by finite compositions of Lie-Trotter integrators $\Phi^{LT}_{t_{k+1}, t_k}= \Phi^{f}_{t_{k+1}, t_k} \circ\Phi^{H}_{t_{k+1}, t_k}$, as in \eqref{eq:Approximation of phi by Lie-Trotter}. Note that

\begin{equation}
\Phi^{f}_{t_{k+1}, t_k} = LDE_{0,0,\Delta t f(\cdot,\cdot,t_k)}\in LDE^{r+1}(T^*Q),
\end{equation}

\noindent
therefore it suffices to show that the symplectic Euler integrators $\Phi^{H}_{t_{k+1}, t_k}$ can be approximated on compact sets with arbitrary accuracy by finite compositions of maps in $LDE^{r+1}(T^*Q)$. Since $\Phi^{H}_{t_{k+1}, t_k}$ is symplectic and of class $C^{r+1}$, \cite[Theorem~2]{Turaev2002} (see also \cite[Lemma~4]{JinZhangZhuTangKarniadakis2020}) implies that, for every compact set $K \subset T^*Q$ and every $\epsilon > 0$ there exist functions $V_1,\ldots,V_{m'} \in C^{r+2}(\mathbb{R}^n)$ such that

\begin{equation}
 \big\|\Phi^{H}_{t_{k+1}, t_k}-\mathcal{H}_{V_{m'}}\circ \ldots \circ \mathcal{H}_{V_1}\big\|_{C^r(K)} < \epsilon,
\end{equation}

\noindent
where

\begin{equation}
\label{eq:Henon-like map}
\mathcal{H}_{V}(q,p) = \bigg(-p+\frac{\partial V}{\partial q}(q), q\bigg)
\end{equation}

\noindent
is a H\'{e}non-like map. Finally, we observe that a H\'{e}non-like map can be represented exactly as a composition of Lagrange-d'Alembert-Euler maps \cite{JinZhangZhuTangKarniadakis2020},

\begin{equation}
\mathcal{H}_V = LDE_{V,0,0}\circ LDE_{0,-\frac{1}{2}\|q\|^2,0}\circ LDE_{-\frac{1}{2}\|p\|^2,0,0}\circ LDE_{0,-\frac{1}{2}\|q\|^2,0},
\end{equation}

\noindent
which completes the proof.\\
\end{proof}

Proposition~\ref{thm:Proposition on approximating phi by LDE} justifies why using Lagrange-d'Alembert-Euler maps in the architecture \eqref{eq: GFHNN as a concatenation} is a natural choice. It remains to show that parametrizing the kinetic $T$, potential $U$, and forcing $f$ terms with neural networks $\tilde T$, $\tilde U$, and $\tilde f$, respectively, allows one to approximate any map in $LDE^r(T^*Q)$. We denote by $\mathcal{N}^r_\sigma(\mathbb{R}^n,\mathbb{R}^m)$ the set of all multilayer perceptrons of class $C^r$, that is,

\begin{equation}
\label{eq:Multilayer perceptrons}
\mathcal{N}^r_\sigma(\mathbb{R}^n,\mathbb{R}^m):=\Big\{ \tilde f : \mathbb{R}^n \longrightarrow \mathbb{R}^m \;\Big|\; \tilde f \text{ is a multilayer perceptron with activation } \sigma \in \mathcal{A}^r \Big\},
\end{equation}

\noindent
where $\mathcal{A}^r$ denotes the set of all activation functions $\sigma : \mathbb{R} \longrightarrow \mathbb{R}$ of class $C^r$, which are bounded and non-constant. As shown in \cite{Hornik1991} (see also \cite{HornikStinchcombeWhite1989,HornikStinchcombeWhite1990}), multilayer perceptrons are universal approximators of $C^r$ functions. In fact, the subset of $\mathcal{N}^r_\sigma(\mathbb{R}^n,\mathbb{R}^m)$ consisting of neural networks with a single hidden layer is already uniformly $r$-dense on compacta in $C^r(\mathbb{R}^n,\mathbb{R}^m)$. We further define

\begin{equation}
\label{eq:Definition of the space of LDE^r neural networks}
LDE^r_\mathcal{N_\sigma}(T^*Q)= \Big\{ LDE_{\tilde T, \tilde U, \tilde f} \; \Big| \; \tilde T, \tilde U \in \mathcal{N}^{r+1}_\sigma(\mathbb{R}^n,\mathbb{R}) \text{ and } \tilde f \in \mathcal{N}^{r}_\sigma(\mathbb{R}^{2n},\mathbb{R}^n) \Big\},
\end{equation}

\noindent
where, for simplicity, we assume that the same activation function $\sigma \in \mathcal{A}^{r+1}$ is used for all three neural networks $\tilde T$, $\tilde U$, and $\tilde f$. We now prove the following proposition.

\begin{prop}
\label{thm:Proposition on approximating LDE by neural networks}
The set $LDE^r_\mathcal{N_\sigma}(T^*Q)$ is uniformly $r$-dense on compacta in $LDE^r(T^*Q)$ for all integer $r\ge 0$. That is, for every Lagrange-d'Alembert-Euler map $LDE_{T,U,f} \in LDE^{r}(T^*Q)$, for every compact set $K \subset T^*Q$, and every $\epsilon > 0$, there exists a map $LDE_{\tilde T, \tilde U, \tilde f} \in LDE^r_\mathcal{N_\sigma}(T^*Q)$ such that

\begin{equation}
\label{eq:Approximating LDE by neural networks}
\left\| LDE_{T,U,f} - LDE_{\tilde T, \tilde U, \tilde f} \right\|_{C^r(K)} < \epsilon.
\end{equation}
\end{prop}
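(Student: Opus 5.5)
The plan is to exploit that $LDE_{T,U,f}$ is an explicit composition of the three ingredient functions with linear operations. Write $LDE_{T,U,f}(q,p)=(Q,P)$ with
\begin{equation*}
Q(q,p)=q+\nabla T(p), \qquad P(q,p)=p-\nabla U\big(Q(q,p)\big)+f\big(Q(q,p),p\big).
\end{equation*}
Fix a compact $K\subset T^*Q$. I will choose the approximating networks in sequence: first $\tilde T$, then $\tilde U$ and $\tilde f$ on a compact set determined by the first choice. I will then bound the $C^r(K)$ error of each component separately.

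\textbf{Approximating $T$.} Let $K_p$ be the projection of $K$ onto the $p$-variables. By the universal approximation result of \cite{Hornik1991}, applied in $C^{r+1}$ with $\sigma\in\mathcal{A}^{r+1}$, there is $\tilde T\in\mathcal{N}^{r+1}_\sigma(\mathbb{R}^n,\mathbb{R})$ with $\|T-\tilde T\|_{C^{r+1}(K_p)}<\delta$. Hence $\|\nabla T-\nabla\tilde T\|_{C^r(K_p)}<\delta$, and so $\|Q-\tilde Q\|_{C^r(K)}<\delta$, where $\tilde Q=q+\nabla\tilde T(p)$.

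\textbf{Approximating $U$ and $f$.} Take $\delta\le1$. Then $\tilde Q(K)$ lies in the compact set $K'=\{x:\operatorname{dist}(x,Q(K))\le1\}$, and the pairs $(\tilde Q,p)$ lie in the compact set $K''=K'\times K_p$. Choose $\tilde U\in\mathcal{N}^{r+1}_\sigma$ with $\|U-\tilde U\|_{C^{r+1}(K')}<\delta$ and $\tilde f\in\mathcal{N}^r_\sigma$ with $\|f-\tilde f\|_{C^r(K'')}<\delta$.

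\textbf{Splitting the momentum error.} For the momentum component I will write
\begin{equation*}
\nabla U(Q)-\nabla\tilde U(\tilde Q)=\big[\nabla U(Q)-\nabla U(\tilde Q)\big]+\big[\nabla U(\tilde Q)-\nabla\tilde U(\tilde Q)\big],
\end{equation*}
and split the forcing term $f(Q,p)-\tilde f(\tilde Q,p)$ in the same way.

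The second bracket in each splitting is easy. By the Fa\`a di Bruno formula, its $C^r(K)$ norm is bounded by a constant times $\|\nabla U-\nabla\tilde U\|_{C^r(K')}$ (respectively $\|f-\tilde f\|_{C^r(K'')}$). The constant depends only on $r$ and on a uniform bound for $\|\tilde Q\|_{C^r(K)}$, and that bound is at most $\|Q\|_{C^r(K)}+1$. This term is therefore $O(\delta)$.

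\textbf{Main obstacle.} The first bracket, the composition difference $g\circ Q-g\circ\tilde Q$ measured in $C^r$, is where I expect the real difficulty. In the worst case $g=f$ is only $C^r$, so one cannot apply a mean value argument to the $r$-th derivative.

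I would handle it by the standard continuity of the composition map $(g,h)\mapsto g\circ h$ from $C^r(K'')\times C^r(K)$ to $C^r(K)$. Expanding $D^l(g\circ h)$ via Fa\`a di Bruno gives sums of terms $D^jg(h)\,[D^{i_1}h,\dots,D^{i_j}h]$. Each such term converges uniformly as $\tilde Q\to Q$ in $C^r$, for two reasons. First, $D^jg$ is uniformly continuous on the compact set $K''$, so $D^jg(\tilde Q)\to D^jg(Q)$ uniformly. Second, the multilinear factors converge uniformly and stay bounded. For $\nabla U$ the same argument applies, and there one even has a Lipschitz bound, since $U\in C^{r+1}$.

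\textbf{Conclusion.} Since $g$ is fixed before $\delta$ is chosen, for $\delta$ small enough the first brackets are below $\epsilon/4$. Combining the estimates and choosing $\delta$ small enough yields \eqref{eq:Approximating LDE by neural networks}, and hence density for every $r\ge0$.
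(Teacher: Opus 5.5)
Your proof is correct, and it is organized differently from the paper's. The paper factors the map into a drift step $LDE_{T,0,0}$ and a kick step $LDE_{0,U,f}$. It writes this as $LDE_{T,0,0}\circ LDE_{0,U,f}$, although the identity that actually holds is $LDE_{T,U,f}=LDE_{0,U,f}\circ LDE_{T,0,0}$. The paper then says it suffices to approximate each factor, and does so with exactly the triangle-inequality estimates you use for your second brackets: $\|\nabla T-\nabla\tilde T\|_{C^r(K_p)}\le\|T-\tilde T\|_{C^{r+1}(K_p)}$, and $\|\nabla U-\nabla\tilde U\|_{C^r(K_q)}+\|f-\tilde f\|_{C^r(K)}$.

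The words ``it suffices'' hide precisely your main obstacle. Going from approximating the factors to approximating their composition needs two things. First, the outer factor must be approximated on a compact set containing the images of $K$ under both the exact and the approximate inner factor; these are your sets $K'$ and $K''$. Second, one needs continuity of $(g,h)\mapsto g\circ h$ in $C^r$ on compacta when the outer map is only $C^r$, which you prove via Fa\`a di Bruno and uniform continuity of $D^jg$ on $K''$. The paper's route is shorter and modular; yours is self-contained and actually closes the composition step.

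One small correction: $U\in C^{r+1}$ only gives $\nabla U\in C^r$, the same regularity as $f$. So there is no Lipschitz bound on the top-order term $D^{r+1}U$, and $f$ is not a worse case than $\nabla U$. This does not affect the proof, because your uniform-continuity argument already covers $\nabla U$.
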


\begin{proof}
We note that

\begin{equation}
LDE_{T,U,f} = LDE_{T,0,0} \circ LDE_{0,U,f},
\end{equation}

\noindent
therefore it suffices to prove the proposition separately for Lagrange-d'Alembert-Euler maps of the forms $LDE_{T,0,0}$ and $LDE_{0,U,f}$. First, consider $LDE_{T,0,0}$ for $T\in C^{r+1}(\mathbb{R}^n)$. Let $\epsilon > 0$ and let $K \subset T^*Q$ be compact. The set

\begin{equation}
K_p = \{ p \in \mathbb{R}^n \,|\, \exists q \in \mathbb{R}^n: (q,p) \in K \}
\end{equation}

\noindent
is the projection of $K$ onto the $p$-coordinates and is therefore compact in $\mathbb{R}^n$. By Theorem~3 in \cite{Hornik1991}, there exists $\tilde T \in \mathcal{N}^{r+1}_\sigma(\mathbb{R}^n,\mathbb{R})$ such that

\begin{equation}
\left\| T - \tilde T \right\|_{C^{r+1}(K_p)} < \epsilon.
\end{equation}

\noindent
We will show that $LDE_{\tilde T, 0, 0}\in LDE^r_\mathcal{N_\sigma}(T^*Q) $ approximates $LDE_{T, 0, 0}$ with the required accuracy. Indeed, by \Cref{eq: Explicit Lagrange-d'Alembert Euler scheme} and the properties of the $C^r$ norm,

\begin{equation}
\left\| LDE_{T,0,0} - LDE_{\tilde T, 0,0} \right\|_{C^r(K)} = \left\| \frac{\partial T}{\partial p} - \frac{\partial \tilde T}{\partial p} \right\|_{C^r(K_p)} \le \left\| T - \tilde T \right\|_{C^{r+1}(K_p)} < \epsilon.
\end{equation}

\noindent
Next, consider now $LDE_{0,U,f}$ for $U\in C^{r+1}(\mathbb{R}^n)$ and $f \in C^r(\mathbb{R}^{2n},\mathbb{R}^n)$. Let $\epsilon > 0$ and let $K \subset T^*Q$ be compact. The set

\begin{equation}
K_q = \{ q \in \mathbb{R}^n \,|\, \exists p \in \mathbb{R}^n: (q,p) \in K \}
\end{equation}

\noindent
is the projection of $K$ onto the $q$-coordinates and is therefore compact in $\mathbb{R}^n$. By Theorem~3 in \cite{Hornik1991}, there exist $\tilde U \in \mathcal{N}^{r+1}_\sigma(\mathbb{R}^n,\mathbb{R})$ and $\tilde f \in \mathcal{N}^{r}_\sigma(\mathbb{R}^{2n},\mathbb{R}^n)$ such that

\begin{equation}
\left\| U - \tilde U \right\|_{C^{r+1}(K_q)} < \frac{\epsilon}{2} \qquad \text{and} \qquad \left\| f - \tilde f \right\|_{C^{r}(K)} < \frac{\epsilon}{2}.
\end{equation}

\noindent
We will show that $LDE_{0,\tilde U, \tilde f}\in LDE^r_\mathcal{N_\sigma}(T^*Q) $ approximates $LDE_{0, U, f}$ with the required accuracy. We have

\begin{align}
\left\| LDE_{0, U, f} - LDE_{0,\tilde U, \tilde f} \right\|_{C^r(K)} &= \left\| - \bigg(\frac{\partial U}{\partial q} - \frac{\partial \tilde U}{\partial q} \bigg) + f - \tilde f \right\|_{C^r(K)} \nonumber \\
&\le \left\| \frac{\partial U}{\partial q} - \frac{\partial \tilde U}{\partial q} \right\|_{C^r(K_q)} + \left\| f - \tilde f \right\|_{C^r(K)} \\
&\le \left\| U - \tilde U \right\|_{C^{r+1}(K_q)} + \left\| f - \tilde f \right\|_{C^r(K)} \nonumber \\
&< \frac{\epsilon}{2} + \frac{\epsilon}{2} = \epsilon, \nonumber
\end{align}

\noindent
which completes the proof.\\
\end{proof}

The following universal approximation theorem follows directly from Proposition~\ref{thm:Proposition on approximating phi by LDE} and Proposition~\ref{thm:Proposition on approximating LDE by neural networks}.

\begin{thm}[{\bf Universal approximation theorem for GFHNNs}]
\label{thm:Universal approximation theorem for GFHNNs}
Let $r\ge 0$ be an integer and let $\varphi \in LdA^{r+1}(T^*Q)$. Then, for every compact set $K \subset T^*Q$ and every $\epsilon > 0$, there exists a finite sequence of maps $LDE_{\tilde T_i, \tilde U_i, \tilde f_i} \in LDE^{r+1}_{\mathcal{N}_\sigma}(T^*Q)$, $i = 1,\ldots,m$, such that

\begin{equation}
\label{eq:Approximating phi by LDE neural networks}
\left\| \varphi - LDE_{\tilde T_m, \tilde U_m, \tilde f_m}\circ \ldots \circ LDE_{\tilde T_1, \tilde U_1, \tilde f_1} \right\|_{C^r(K)} < \epsilon.
\end{equation}
\end{thm}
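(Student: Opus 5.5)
The plan is to combine Proposition~\ref{thm:Proposition on approximating phi by LDE} and Proposition~\ref{thm:Proposition on approximating LDE by neural networks} through a triangle inequality, and then to handle the error that propagates through the composition. Fix $K$ and $\epsilon$. First I apply Proposition~\ref{thm:Proposition on approximating phi by LDE} to obtain maps $h_i := LDE_{T_i,U_i,f_i} \in LDE^{r+1}(T^*Q)$, $i=1,\ldots,m$, such that
\[
\|\varphi - h_m\circ\cdots\circ h_1\|_{C^r(K)} < \epsilon/2 .
\]
Next, for each $i$ I replace $h_i$ by a network map $g_i := LDE_{\tilde T_i,\tilde U_i,\tilde f_i} \in LDE^{r+1}_{\mathcal N_\sigma}(T^*Q)$ using Proposition~\ref{thm:Proposition on approximating LDE by neural networks} with $r+1$ in place of $r$. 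This gives closeness in $C^{r+1}$, which is more than needed, on a suitably chosen compact set $K_i$. It then remains to show
\[
\|h_m\circ\cdots\circ h_1 - g_m\circ\cdots\circ g_1\|_{C^r(K)} < \epsilon/2 .
\]

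The main obstacle is this composition step. Closeness of each factor on some fixed set does not automatically give closeness of the composition, because the inner approximants move points slightly off the images of the exact maps. I will build the sets $K_i$ inductively. Set $K_1 := K$ and let $K_{i+1}$ be a closed $1$-neighbourhood of $h_i(K_i)$. Each $K_{i+1}$ is compact because the $h_i$ are continuous. I then argue by induction on $j$ that the partial compositions $G_j := g_j\circ\cdots\circ g_1$ converge to $H_j := h_j\circ\cdots\circ h_1$ in $C^r(K)$ as the individual approximation errors $\delta_i := \|h_i-g_i\|_{C^{r+1}(K_i)}$ tend to zero. This convergence implies in particular that $G_j(K)\subset K_{j+1}$ once the errors are small.

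For the inductive step I write
\[
G_{j+1}-H_{j+1} = \big(g_{j+1}\circ G_j - h_{j+1}\circ G_j\big) + \big(h_{j+1}\circ G_j - h_{j+1}\circ H_j\big).
\]
I will expand the $l$-th derivatives, $l\le r$, of both brackets with the Fa\`a di Bruno formula, as in the proof of Theorem~\ref{thm:Convergence of first-order one-step methods}. The first bracket is bounded by $\delta_{j+1}$ times a polynomial in the derivatives of $G_j$ up to order $r$, and these derivatives are uniformly bounded by the induction hypothesis. For the second bracket, each term is a product of some $D^k h_{j+1}$ evaluated along $G_j$ or along $H_j$ with derivatives of $G_j$ or $H_j$. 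Since $h_{j+1}\in C^{r+1}$, each $D^k h_{j+1}$ with $k\le r$ is Lipschitz on the compact set $K_{j+1}$, so the differences are controlled by $\|G_j-H_j\|_{C^r(K)}$. This is exactly where the extra derivative supplied by $LdA^{r+1}$ and $LDE^{r+1}$ is used.

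Having shown that the composition is continuous in this sense, I choose the $\delta_i$ successively small enough to obtain the bound $\epsilon/2$ on the composition error. Adding the two halves yields \eqref{eq:Approximating phi by LDE neural networks}.
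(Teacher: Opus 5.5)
Your proposal is correct and follows the paper's route. The paper simply asserts that the theorem follows directly from Proposition~\ref{thm:Proposition on approximating phi by LDE} and Proposition~\ref{thm:Proposition on approximating LDE by neural networks}: approximate $\varphi$ by a composition of maps in $LDE^{r+1}(T^*Q)$, then replace each factor by a network map in $LDE^{r+1}_{\mathcal{N}_\sigma}(T^*Q)$. Your inductive choice of compact sets $K_{i+1}$, the closed $1$-neighbourhoods of $h_i(K_i)$ fixed before the $g_i$ are chosen, together with the Fa\`a di Bruno and Lipschitz estimates, supplies exactly the stability-of-composition step that the paper's ``follows directly'' leaves implicit.
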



\section{Parametric Generalized Forced Hamiltonian Neural Networks}
\label{sec: Parametric Generalized Forced Hamiltonian Neural Networks}

In this section we consider parametric autonomous forced Hamiltonian systems and propose a general structure-preserving neural network architecture designed to learn their parameter-dependent flow.

\subsection{Parametric autonomous forced Hamiltonian systems}
\label{eq: Parametric autonomous forced Hamiltonian systems}

The evolution of a parametric autonomous forced Hamiltonian system is governed by the differential equations \eqref{eq: Parametric forced Hamiltonian system}, where the parameter-dependent Hamiltonian $H: T^*Q \times I \longrightarrow \mathbb{R}$ and external forcing $f_H: T^*Q \times I \longrightarrow T^*Q$, given by $f_H(q,p,\mu) = (q, f(q,p,\mu))$, do not depend explicitly on time. Consequently, the parametric Lagrange-d'Alembert flow $F_{t,t_0}$ depends only on the difference $t-t_0$, and will therefore be denoted by $F_t$. For a fixed value of $\mu$, numerical integration of the system \eqref{eq: Parametric forced Hamiltonian system} can be performed by using the scheme \eqref{eq: General Lagrange-d'Alembert Euler scheme}. The Lagrange-d'Alembert-Euler map \eqref{eq: Explicit Lagrange-d'Alembert Euler scheme} becomes parameter-dependent through the parameter-dependent functions $T=T(q,p,\mu)$, $U=U(q,p,\mu)$, and $f=f(q,p,\mu)$. Let us introduce the augmented Lagrange-d'Alembert-Euler map,

\begin{equation}
\label{eq: Augmented LDE map}
LDE^\mathrm{aug}_{T,U,f}(q,p,\mu) = \Big(LDE_{T(\cdot,\mu),U(\cdot,\mu),f(\cdot,\cdot,\mu)}(q,p),\mu\Big),
\end{equation}

\noindent
which applies the Lagrange-d'Alembert-Euler map corresponding to the parameter value $\mu$ to the $(q,p)$ variables and leaves the parameter $\mu$ unchanged. Analogously to \eqref{eq: Definition of LDE^r maps}, we define the set of all augmented Lagrange-d'Alembert-Euler maps of class $C^r$ as

\begin{align}
\label{eq: Definition of parametric LDE^r maps}
LDE^r(T^*Q, I) = \Big\{ LDE^\mathrm{aug}_{T,U,f} \,\Big |\, T, U\in C^{r+1}(\mathbb{R}^n\times I), f \in C^r(\mathbb{R}^{2n}\times I,\mathbb{R}^n) \Big\}.
\end{align}

\subsection{Parametric structure-preserving neural network architecture}
\label{sec: Parametric structure-preserving neural network architecture}

A class of neural networks called Parametric Generalized Hamiltonian Neural Networks (PGHNNs), suitable for learning parametric canonical Hamiltonian systems, was introduced in \cite{Horn2026PHD, HornKorenPGHNN} as an extension of GHNNs (see Figure~\ref{fig:PGHNN}). In the same spirit, we propose Parametric Generalized Forced Hamiltonian Neural Networks (PGFHNNs), which allow parameter dependence in the architecture of GFHNNs introduced in Section~\ref{sec: Structure-preserving neural network architecture}. We define a PGFHNN as a concatenation of augmented Lagrange-d'Alembert-Euler maps \eqref{eq: Augmented LDE map} in which the kinetic and potential energies, together with the forcing term, are represented by multilayer perceptrons that take as input both the dynamic variables $q$ and $p$, as well as the parameter $\mu$. Using notation similar to that in \cite{Horn2026PHD, HornKorenPGHNN}, a PGFHNN can be expressed as

\begin{align}
\label{eq: PGFHNN as a concatenation}
PGFHNN(q,p,\mu;\theta) &= LDE^\mathrm{aug}_{\tilde T_m(\cdot,\cdot;\vartheta_m), \tilde U_m(\cdot,\cdot;\psi_m), \tilde f_m(\cdot,\cdot,\cdot;\chi_m)}\circ \ldots \circ LDE^\mathrm{aug}_{\tilde T_1(\cdot,\cdot;\vartheta_1), \tilde U_1(\cdot,\cdot;\psi_1), \tilde f_1(\cdot,\cdot,\cdot;\chi_1)} (q,p,\mu),
\end{align}

\begin{figure}
	\begin{tikzpicture}[
    node distance=5mm and 8mm,
    state_box/.style={rectangle, rounded corners, minimum height=2cm, minimum width=1cm, very thick, text centered, font=\Large},
    red_box/.style={state_box, fill=mred!25, draw=mred!80!black},
    red_small_box/.style={red_box, minimum height=.4cm},
    green_box/.style={state_box, fill=mgreen!50, draw=mgreen!60!black},
    green_small_box/.style={green_box, minimum height=.4cm},
    blue_node/.style={circle, draw=mblue!50!black, fill=mblue!80, minimum size=5mm},
    special_node/.style={blue_node, fill=mblue!40, minimum size=6mm},
    green_bar/.style={rectangle, rounded corners, draw=mgreen!70!black, fill=mgreen!50, very thick, minimum height=1.8cm, minimum width=4mm},
    si_node/.style={rectangle, rounded corners, draw=mgreen!70!black, fill=mgreen!50, very thick, inner sep=2.5pt, font=\small\bfseries},
    nabla_node/.style={font=\huge}, 
    connector/.style={draw, thick},
    dot_connector/.style={draw, very thick, line cap=round}
]

\tikzset{
    pics/nn_module/.style={
        code={
            \foreach \row in {1,2,4,5} {
                \foreach \col in {1,2} {
                    \node[blue_node] (-node-\row-\col) at (-2+\col*0.8, 2.4-\row*0.8) {};
                }
            }
            \foreach \row in {1,2,4,5} {
                \draw[dot_connector] (-node-\row-1.east) -- (-node-\row-2.west);
                \fill (-node-\row-1.east) circle (1.2pt);
                \fill (-node-\row-2.west) circle (1.2pt);
            }
            \foreach \col in {1,2} {
                 \draw[dot_connector] (-node-2-\col.south) -- (-node-4-\col.north);
                 \draw[dot_connector] (-node-2-\col.north) -- (-node-1-\col.south);
                 \draw[dot_connector] (-node-5-\col.north) -- (-node-4-\col.south);
                 \fill (-node-2-\col.south) circle (1.2pt);
                 \fill (-node-4-\col.north) circle (1.2pt);
                 \fill (-node-2-\col.north) circle (1.2pt);
                 \fill (-node-1-\col.south) circle (1.2pt);
                 \fill (-node-4-\col.south) circle (1.2pt);
                 \fill (-node-5-\col.north) circle (1.2pt);
            }
            
            \node[special_node, fill=mgreen!50] (-H) at (.5, 0) {H};
            \node[special_node, above of=-H, yshift=+1.5em, xshift=-.2em] (-T) {T};
            \node[special_node, below of=-H, yshift=-1.5em, xshift=-.2em] (-U) {U};
            
            \draw[connector] (-node-2-2) -- (-T);
            \draw[connector] (-node-1-2) -- (-T);
            \draw[connector] (-node-4-2) -- (-U);
            \draw[connector] (-node-5-2) -- (-U);
            \draw[connector] (-H) -- (-U);
            \draw[connector] (-T) -- (-H);
        }
    }
}


\node[red_box] (state0) {$\begin{matrix}q_n \\ p_n\end{matrix}$};
\node[red_small_box, below of=state0, yshift=-5em] (mu0) {$\mu$};

\pic[right=1.5cm of state0] (nn1) {nn_module};
\node[green_bar, right=.5cm of nn1-H] (bar1) {$\nabla$};
\node[nabla_node, right=1mm of bar1] (nabla1) {};
\node[si_node, right=1mm of nabla1] (si1) {SI};
\node[green_box, right=of si1] (state1) {$\begin{matrix}\hat{q}_1 \\ \hat{p}_1\end{matrix}$};
\node[green_small_box, below of=state1, yshift=-5em] (mu1) {$\mu$};

\pic[right=1.5cm of state1] (nn2) {nn_module};
\node[green_bar, right=.5cm of nn2-H] (bar2) {$\nabla$};
\node[nabla_node, right=1mm of bar2] (nabla2) {};
\node[si_node, right=1mm of nabla2] (si2) {SI};
\node[green_box, right=of si2] (state2) {$\begin{matrix}\hat{q}_2 \\ \hat{p}_2\end{matrix}$};
\node[green_small_box, below of=state2, yshift=-5em] (mu2) {$\mu$};

\node[right=.3cm of state2, font=\Huge] (dots) {\dots};
\node[si_node, right=of dots, xshift=-2em] (si_final) {SI};
\node[red_box, right=of si_final] (state_final) {$\begin{matrix}q_{n+1} \\ p_{n+1}\end{matrix}$};
\node[red_small_box, below of=state_final, yshift=-5em] (mu_final) {$\mu$};

\draw[connector] ($(state0.north east)!0.5!(state0.east)$) -- (nn1-node-1-1.west);
\draw[connector] ($(state0.south east)!0.5!(state0.east)$) -- (nn1-node-5-1.west);
\draw[connector] ($(mu0.north west)!0.5!(mu0.north)$) -- ($(mu0.east)!0.4!(state0.east)$) -- (nn1-node-1-1.west);
\draw[connector] ($(mu0.north east)!0.5!(mu0.north)$) -- (nn1-node-5-1.west);

\draw[connector] (nn1-H) -- (bar1);
\draw[connector] (bar1) -- (si1); 
\coordinate (fork1) at ($(si1.east)!0.5!(state1.west)$);
\draw[connector] (si1.east) -- (fork1);
\coordinate (upper_coordinate1) at ($(state1.north west)!0.5!(state1.west)$);
\coordinate (lower_coordinate1) at ($(state1.south west)!0.5!(state1.west)$);
\draw[connector] (fork1) -- (upper_coordinate1);
\draw[connector] (fork1) -- (lower_coordinate1);

\draw[connector] ($(state1.north east)!0.5!(state1.east)$) -- (nn2-node-1-1.west);
\draw[connector] ($(state1.south east)!0.5!(state1.east)$) -- (nn2-node-5-1.west);
\draw[connector] ($(mu1.north west)!0.5!(mu1.north)$) -- ($(mu1.east)!0.4!(state1.east)$) -- (nn2-node-1-1.west);
\draw[connector] ($(mu1.north east)!0.5!(mu1.north)$) -- (nn2-node-5-1.west);

\draw[connector] (nn2-H) -- (bar2);
\draw[connector] (bar2) -- (si2); 
\coordinate (fork2) at ($(si2.east)!0.5!(state2.west)$);
\draw[connector] (si2.east) -- (fork2);
\coordinate (upper_coordinate2) at ($(state2.north west)!0.5!(state2.west)$);
\coordinate (lower_coordinate2) at ($(state2.south west)!0.5!(state2.west)$);
\draw[connector] (fork2) -- (upper_coordinate2);
\draw[connector] (fork2) -- (lower_coordinate2);

\draw[connector] (dots) -- (si_final);
\coordinate (fork_final) at ($(si_final.east)!0.5!(state_final.west)$);
\draw[connector] (si_final.east) -- (fork_final);
\coordinate (upper_coordinate_final) at ($(state_final.north west)!0.5!(state_final.west)$);
\coordinate (lower_coordinate_final) at ($(state_final.south west)!0.5!(state_final.west)$);
\draw[connector] (fork_final) -- (upper_coordinate_final);
\draw[connector] (fork_final) -- (lower_coordinate_final);

\draw[-stealth] (mu0.east) -- (mu1.west) node[midway, yshift=.5em] {Id};
\draw[-stealth] (mu1.east) -- (mu2.west) node[midway, yshift=.5em] {Id};
\draw[-stealth] (mu2.east) -- (mu_final.west) node[midway, yshift=.8em] {$\begin{matrix}\mathrm{Id} \\ \Huge\cdots\end{matrix}$};

\end{tikzpicture}
	\caption{Schematic representation of a Parametric Generalized Hamiltonian Neural Network (PGHNN). The architecture extends the GHNN framework to parameter-dependent Hamiltonian systems through an additional parameter input. Each stage consists of a symplectic integrator block parameterized by separable Hamiltonians with learned kinetic and potential energy components. This figure has been reconstructed to match \cite{Horn2026PHD, HornKorenPGHNN}.}
	\label{fig:PGHNN}
\end{figure}

\begin{figure}
	\begin{tikzpicture}[
    node distance=5mm and 8mm,
    state_box/.style={rectangle, rounded corners, minimum height=2cm, minimum width=1cm, very thick, text centered, font=\Large},
    red_box/.style={state_box, fill=mred!25, draw=mred!80!black},
    red_small_box/.style={red_box, minimum height=.4cm},
    green_box/.style={state_box, fill=mgreen!50, draw=mgreen!60!black},
    green_small_box/.style={green_box, minimum height=.4cm},
    blue_node/.style={circle, draw=mblue!50!black, fill=mblue!80, minimum size=5mm},
    blue_small_node/.style={circle, draw=mblue!50!black, fill=mblue!80, minimum size=2mm},
    special_node/.style={blue_node, fill=mblue!40, minimum size=6mm},
    green_bar/.style={rectangle, rounded corners, draw=mgreen!70!black, fill=mgreen!50, very thick, minimum height=1.8cm, minimum width=4mm},
    si_node/.style={rectangle, rounded corners, draw=mgreen!70!black, fill=mgreen!50, very thick, inner sep=2.5pt, font=\small\bfseries},
    nabla_node/.style={font=\huge}, 
    connector/.style={draw, thick},
    arrow/.style={draw, thick, -stealth},
    dot_connector/.style={draw, very thick, line cap=round}
]

\tikzset{
    pics/nn_module/.style={
        code={
            \foreach \row in {1,2,4,5} {
                \foreach \col in {1,2} {
                    \node[blue_node] (-node-\row-\col) at (-2+\col*0.8, 2.4-\row*0.8) {};
                }
            }
            \foreach \row in {1,2,4,5} {
                \draw[dot_connector] (-node-\row-1.east) -- (-node-\row-2.west);
                \fill (-node-\row-1.east) circle (1.2pt);
                \fill (-node-\row-2.west) circle (1.2pt);
            }
            \foreach \col in {1,2} {
                 \draw[dot_connector] (-node-2-\col.south) -- (-node-4-\col.north);
                 \draw[dot_connector] (-node-2-\col.north) -- (-node-1-\col.south);
                 \draw[dot_connector] (-node-5-\col.north) -- (-node-4-\col.south);
                 \fill (-node-2-\col.south) circle (1.2pt);
                 \fill (-node-4-\col.north) circle (1.2pt);
                 \fill (-node-2-\col.north) circle (1.2pt);
                 \fill (-node-1-\col.south) circle (1.2pt);
                 \fill (-node-4-\col.south) circle (1.2pt);
                 \fill (-node-5-\col.north) circle (1.2pt);
            }

            \node[special_node, fill=mgreen!50] (-H) at (.5, 0) {H};
            \node[special_node, above of=-H, yshift=+1.5em, xshift=-.2em] (-T) {T};
            \node[special_node, below of=-H, yshift=-1.5em, xshift=-.2em] (-U) {U};

            \draw[connector] (-node-2-2) -- (-T);
            \draw[connector] (-node-1-2) -- (-T);
            \draw[connector] (-node-4-2) -- (-U);
            \draw[connector] (-node-5-2) -- (-U);
            \draw[connector] (-H) -- (-U);
            \draw[connector] (-T) -- (-H);
        }
    },
    pics/fnn_module/.style={
        code={
            \foreach \row in {1,2,3} {
                \foreach \col in {1,2} {
                    \node[blue_small_node] (-node-\row-\col) at (-2+\col*0.8, 2.4-\row*0.8) {};
                }
            }
            \foreach \row in {1,2,3} {
                \draw[dot_connector] (-node-\row-1.east) -- (-node-\row-2.west);
                \fill (-node-\row-1.east) circle (1.2pt);
                \fill (-node-\row-2.west) circle (1.2pt);
                \foreach \rowt in {1,2,3} {
                \draw[dot_connector] (-node-\row-1.east) -- (-node-\rowt-2.west);
                }
            }
            \fill (-node-1-1.east) circle (1.2pt);
            \fill (-node-2-1.east) circle (1.2pt);
            \fill (-node-3-1.east) circle (1.2pt);
            \fill (-node-1-2.west) circle (1.2pt);
            \fill (-node-2-2.west) circle (1.2pt);
            \fill (-node-3-2.west) circle (1.2pt);

            \node[special_node, fill=mpurple!50, right=.5em of -node-2-2] (-f) {f};

            \draw[connector] (-node-1-2) -- (-f);
            \draw[connector] (-node-2-2) -- (-f);
            \draw[connector] (-node-3-2) -- (-f);
        }
    }
}


\node[red_box] (state0) {$\begin{matrix}q_n \\ p_n\end{matrix}$};
\node[red_small_box, above of=state0, yshift=+5em] (mu0) {$\mu$};

\pic[right=1.5cm of state0] (nn1) {nn_module};
\node[green_bar, right=.5cm of nn1-H] (bar1) {$\nabla$};
\node[nabla_node, right=1mm of bar1] (nabla1) {};
\node[si_node, right=6mm of nabla1] (si1) {LDE};
\node[green_box, right=of si1] (state1) {$\begin{matrix}\hat{q}_1 \\ \hat{p}_1\end{matrix}$};
\node[green_small_box, above of=state1, yshift=+5em] (mu1) {$\mu$};
\pic[below of=nabla1, yshift=-8em, xshift=2em] (fnn1) {fnn_module};
\coordinate (fnn_fork1) at ($(fnn1-node-2-1)-(.6,0)$);

\pic[right=1.5cm of state1] (nn2) {nn_module};
\node[green_bar, right=.5cm of nn2-H] (bar2) {$\nabla$};
\node[nabla_node, right=1mm of bar2] (nabla2) {};
\node[si_node, right=6mm of nabla2] (si2) {LDE};
\node[green_box, right=of si2] (state2) {$\begin{matrix}\hat{q}_2 \\ \hat{p}_2\end{matrix}$};
\node[green_small_box, above of=state2, yshift=+5em] (mu2) {$\mu$};
\pic[below of=nabla2, yshift=-8em, xshift=2em] (fnn2) {fnn_module};
\coordinate (fnn_fork2) at ($(fnn2-node-2-1)-(.6,0)$);

\node[right=.3cm of state2, font=\Huge] (dots) {\dots};
\node[si_node, right=of dots, xshift=-2em] (si_final) {LDE};
\node[red_box, right=of si_final] (state_final) {$\begin{matrix}q_{n+1} \\ p_{n+1}\end{matrix}$};
\node[red_small_box, above of=state_final, yshift=+5em] (mu_final) {$\mu$};
\pic[below of=si_final, yshift=-8em, xshift=-1em] (fnn_final) {fnn_module};
\coordinate (fnn_fork_final) at ($(fnn_final-node-2-1)-(.6,0)$);

\draw[connector] ($(state0.north east)!0.5!(state0.east)$) -- (nn1-node-1-1.west);
\draw[connector] ($(state0.south east)!0.5!(state0.east)$) -- (nn1-node-5-1.west);
\draw[connector] ($(mu0.south east)!0.5!(mu0.south)$) -- (nn1-node-1-1.west);
\draw[connector] ($(mu0.south west)!0.5!(mu0.south)$) -- ($(mu0.east)!0.4!(state0.east)$) -- (nn1-node-5-1.west);

\draw[connector] (nn1-H) -- (bar1);
\draw[connector] (bar1) -- (si1);
\coordinate (fork1) at ($(si1.east)!0.5!(state1.west)$);
\draw[connector] (si1.east) -- (fork1);
\coordinate (upper_coordinate1) at ($(state1.north west)!0.5!(state1.west)$);
\coordinate (lower_coordinate1) at ($(state1.south west)!0.5!(state1.west)$);
\draw[connector] (fork1) -- (upper_coordinate1);
\draw[connector] (fork1) -- (lower_coordinate1);
\draw[connector] (fnn1-f) -- (si1.south);
\draw[arrow] ($(mu0.east)!0.5!(mu1.west)$) -- (fnn_fork1);
\draw[connector] (state0.south) |- (fnn_fork1);
\draw[connector] (fnn_fork1) -- (fnn1-node-1-1);
\draw[connector] (fnn_fork1) -- (fnn1-node-2-1);
\draw[connector] (fnn_fork1) -- (fnn1-node-3-1);

\draw[connector] ($(state1.north east)!0.5!(state1.east)$) -- (nn2-node-1-1.west);
\draw[connector] ($(state1.south east)!0.5!(state1.east)$) -- (nn2-node-5-1.west);
\draw[connector] ($(mu1.south east)!0.5!(mu1.south)$) -- (nn2-node-1-1.west);
\draw[connector] ($(mu1.south west)!0.5!(mu1.south)$) -- ($(mu1.east)!0.4!(state1.east)$) -- (nn2-node-5-1.west);

\draw[connector] (nn2-H) -- (bar2);
\draw[connector] (bar2) -- (si2);
\coordinate (fork2) at ($(si2.east)!0.5!(state2.west)$);
\draw[connector] (si2.east) -- (fork2);
\coordinate (upper_coordinate2) at ($(state2.north west)!0.5!(state2.west)$);
\coordinate (lower_coordinate2) at ($(state2.south west)!0.5!(state2.west)$);
\draw[connector] (fork2) -- (upper_coordinate2);
\draw[connector] (fork2) -- (lower_coordinate2);
\draw[connector] (fnn2-f) -- (si2.south);
\draw[arrow] ($(mu1.east)!0.5!(mu2.west)$) -- (fnn_fork2);
\draw[connector] (state1.south) |- (fnn_fork2);
\draw[connector] (fnn_fork2) -- (fnn2-node-1-1);
\draw[connector] (fnn_fork2) -- (fnn2-node-2-1);
\draw[connector] (fnn_fork2) -- (fnn2-node-3-1);

\draw[connector] (dots) -- (si_final);
\coordinate (fork_final) at ($(si_final.east)!0.5!(state_final.west)$);
\draw[connector] (si_final.east) -- (fork_final);
\coordinate (upper_coordinate_final) at ($(state_final.north west)!0.5!(state_final.west)$);
\coordinate (lower_coordinate_final) at ($(state_final.south west)!0.5!(state_final.west)$);
\draw[connector] (fork_final) -- (upper_coordinate_final);
\draw[connector] (fork_final) -- (lower_coordinate_final);
\draw[connector] (fnn_final-f) -- (si_final.south);

\draw[-stealth] (mu0.east) -- (mu1.west) node[midway, yshift=.5em] {Id};
\draw[-stealth] (mu1.east) -- (mu2.west) node[midway, yshift=.5em] {Id};
\draw[-stealth] (mu2.east) -- (mu_final.west) node[midway, yshift=.8em] {$\begin{matrix}\mathrm{Id} \\ \Huge\cdots\end{matrix}$};

\end{tikzpicture}
	\caption{Schematic representation of a Parametric Generalized Forced Hamiltonian Neural Network (PGFHNN), extending the PGHNN architecture (see \Cref{fig:PGHNN}) to incorporate external forcing. The model combines parameter-dependent Hamiltonian dynamics with additional terms representing non-conservative forces.}
	\label{fig:PGFHNN}
\end{figure}

\noindent
where $\theta=(\vartheta_1,\psi_1,\chi_1,\vartheta_2,\psi_2,\chi_2,\ldots)$ collects all the trainable parameters $\vartheta_i$, $\psi_i$, $\chi_i$ of the neural networks $\tilde T_i$, $\tilde U_i$, $\tilde f_i$, respectively, for $i=1,\ldots,m$. A schematic representation of a PGFHNN is shown in \Cref{fig:PGFHNN}. Given a set of training data $((q^a_i,p^a_i),(q^b_i,p^b_i),\mu_i)$ for $i=1,\ldots,d$ such that $(q^b_i,p^b_i)=F_{\Delta t}(q^a_i,p^a_i,\mu_i)$, the neural network \eqref{eq: PGFHNN as a concatenation} can be trained by minimizing the mean squared loss,

\begin{align}
\label{eq:Loss function for PGFHNN}
\text{Loss}(\theta) = \frac{1}{2nd}\sum_{i=1}^d \big\| (q^b_i,p^b_i,\mu_i) - PGFHNN(q^a_i,p^a_i,\mu_i;\theta) \big\|^2,
\end{align}

\noindent
using a suitable optimization algorithm.

\subsection{Universal approximation theorem for PGFHNNs}
\label{sec: Universal approximation theorem for PGFHNNs}

A universal approximation theorem for PGFHNNs can be established by following the same steps as in Section~\ref{sec: Universal approximation theorem for GFHNNs}. First, in view of the discussion in Section~\ref{sec:Parametric time-dependent forced Hamiltonian systems}, Proposition~\ref{thm:Proposition on approximating phi by Lie-Trotter} extends naturally to the parametric setting and allows us to approximate parametric Lagrange-d'Alembert maps by parameter-dependent Lie-Trotter integrators in the $C^r$ topology on compact sets $K\subset T^*Q\times I$. Moreover, Proposition~\ref{thm:Proposition on approximating phi by LDE} extends naturally to approximations by augmented Lagrange-d'Alembert-Euler maps. Here we use the fact that \cite[Theorem~2]{Turaev2002} was proved for parameter-dependent symplectic diffeomorphisms. Analogously to \eqref{eq:Definition of the space of LDE^r neural networks}, we define

\begin{equation}
\label{eq:Definition of the space of augmented LDE^r neural networks}
LDE^r_\mathcal{N_\sigma}(T^*Q,I)= \Big\{ LDE^\mathrm{aug}_{\tilde T, \tilde U, \tilde f} \; \Big| \; \tilde T, \tilde U \in \mathcal{N}^{r+1}_\sigma(\mathbb{R}^n\times I,\mathbb{R}) \text{ and } \tilde f \in \mathcal{N}^{r}_\sigma(\mathbb{R}^{2n}\times I,\mathbb{R}^n) \Big\}.
\end{equation}

\noindent
It is then straightforward to extend Proposition~\ref{thm:Proposition on approximating LDE by neural networks} and show that $LDE^r_\mathcal{N_\sigma}(T^*Q,I)$ is uniformly $r$-dense on compacta in $LDE^r(T^*Q,I)$. Combining all these results, we obtain the following theorem.

\begin{thm}[{\bf Universal approximation theorem for PGFHNNs}]
\label{thm:Universal approximation theorem for PGFHNNs}
Let $r\ge 0$ be an integer and let $\varphi \in LdA^{r+1}(T^*Q,I)$. Then, for every compact set $K \subset T^*Q\times I$ and every $\epsilon > 0$, there exists a finite sequence of maps $LDE^\mathrm{aug}_{\tilde T_i, \tilde U_i, \tilde f_i} \in LDE^{r+1}_{\mathcal{N}_\sigma}(T^*Q,I)$, $i = 1,\ldots,m$, such that

\begin{equation}
\label{eq:Approximating phi by augmented LDE neural networks}
\left\| \varphi - \pi\circ LDE^\mathrm{aug}_{\tilde T_m, \tilde U_m, \tilde f_m}\circ \ldots \circ LDE^\mathrm{aug}_{\tilde T_1, \tilde U_1, \tilde f_1} \right\|_{C^r(K)} < \epsilon,
\end{equation}

\noindent
where $\pi : T^*Q \times I \longrightarrow T^*Q$ denotes the projection onto $T^*Q$.
\end{thm}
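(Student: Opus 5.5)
The proof will mirror the proof of Theorem~\ref{thm:Universal approximation theorem for GFHNNs}, working throughout with the augmented maps on $T^*Q\times I$ and, at the end, composing with the projection $\pi$. It has three steps.

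\emph{Step 1: approximation by augmented Lie-Trotter compositions.} By Definition~\ref{thm: Definition of parametric Lagrange-d'Alembert maps}, $\varphi=F_{T,0}$ for a parametric system satisfying Assumption~\ref{ass:standing parametric}. I apply Theorem~\ref{thm:Convergence of first-order one-step methods} to the augmented system $\dot x=g(x,t,\mu)$, $\dot\mu=0$, exactly as described in Section~\ref{sec:Parametric time-dependent forced Hamiltonian systems}. The one-step scheme is the augmented Lie-Trotter map $(q,p,\mu)\mapsto(\Phi^{LT}_{t_{k+1},t_k}(q,p,\mu),\mu)$. Its consistency and stability hold on compacta of $T^*Q\times I$ by the implicit function theorem, as in the Remark following Theorem~\ref{thm:Convergence of first-order one-step methods}. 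Since $\pi$ is linear and bounded, choosing $N$ as in Proposition~\ref{thm:Proposition on approximating phi by Lie-Trotter} gives
\[
\|\varphi-\pi\circ\Phi^{LT,\mathrm{aug}}_{t_N,t_{N-1}}\circ\cdots\circ\Phi^{LT,\mathrm{aug}}_{t_1,t_0}\|_{C^r(K)}<\epsilon/3.
\]

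\emph{Step 2: replace each factor by augmented LDE maps.} The forcing factor $\Phi^f$ is exactly $LDE^{\mathrm{aug}}_{0,0,\Delta t f(\cdot,\cdot,t_k,\cdot)}$. The symplectic Euler factor $\Phi^H(\cdot,\mu)$ is a $\mu$-dependent family of symplectic $C^{r+1}$ maps. I use the parameter-dependent version of \cite[Theorem~2]{Turaev2002} to approximate it in $C^r$ on compacta of $T^*Q\times I$ by compositions of parameter-dependent H\'enon-like maps $\mathcal{H}_{V(\cdot,\mu)}$. Each such map factors exactly into four augmented LDE maps via the identity in the proof of Proposition~\ref{thm:Proposition on approximating phi by LDE}; the quadratic factors there are simply independent of $\mu$.

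\emph{Step 3: pass to neural networks.} I invoke the parametric extension of Proposition~\ref{thm:Proposition on approximating LDE by neural networks}. It uses the same argument, with the projections $K_p$, $K_q$ replaced by projections of $K$ onto $(p,\mu)$ and $(q,\mu)$, and with \cite[Theorem~3]{Hornik1991} applied on $\mathbb{R}^{n+l}$ and $\mathbb{R}^{2n+l}$.

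The main obstacle is controlling errors under composition, in both Step 2 and Step 3. I handle it by backward induction along the chain of maps. Fix a compact neighbourhood $\bar K$ containing the images of $K$ under all partial compositions of the target factors. On $\bar K$, each target factor is uniformly continuous in $C^r$ together with its derivatives up to order $r$. Write $A_m\circ\cdots\circ A_1-\tilde A_m\circ\cdots\circ\tilde A_1$ as a telescoping sum. Fa\`a di Bruno's formula bounds each term by a constant times $\|A_j-\tilde A_j\|_{C^r}$ on a compact set, plus moduli of continuity of the derivatives of the later factors. Once the approximating intermediate images are close enough, they stay inside $\bar K$, and the composition error is below $\epsilon/3$ in each of Steps 2 and 3.

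One caveat is that the compact sets must stay inside $T^*Q\times I$ with $I$ open. This is handled by taking $\bar K$ to be a thickening only in the $(q,p)$ directions, since the $\mu$-component is never altered by augmented maps. Summing the three $\epsilon/3$ contributions gives \eqref{eq:Approximating phi by augmented LDE neural networks}.
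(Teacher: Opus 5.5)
Your proposal is correct and follows the paper's route. The three steps are the same: parametric Lie--Trotter approximation via the augmented system and Theorem~\ref{thm:Convergence of first-order one-step methods}; the parameter-dependent Turaev theorem with the exact factorization of H\'enon-like maps into augmented LDE maps; and the parametric extension of Proposition~\ref{thm:Proposition on approximating LDE by neural networks}. Your telescoping/Fa\`a di Bruno control of composition errors, and your thickening of $\bar K$ only in the $(q,p)$ directions because $I$ is open, spell out what the paper compresses into ``combining all these results.''
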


\section{Learning time-dependent and stochastic systems}
\label{sec: Learning time-dependent and stochastic systems}

In this section, we show how time-dependent and stochastic systems can be reformulated so that the PGFHNNs developed in Section~\ref{sec: Parametric Generalized Forced Hamiltonian Neural Networks} can be used to learn their flows.

\subsection{Time-dependent systems}
\label{sec: Time-dependent systems}

Consider the time-dependent forced Hamiltonian system \eqref{eq: Time-dependent forced Hamiltonian system}. Fix a time step $\Delta t >0$. Then the flow $F_{t+\Delta t,t}$ can be viewed as a parametric Lagrange-d'Alembert map in the sense of Definition~\ref{thm: Definition of parametric Lagrange-d'Alembert maps}, with time $t$ playing the role of a parameter. Indeed, define a parameter-dependent Hamiltonian $\tilde H: T^*Q \times [0,\Delta t]\times\mathbb{R} \longrightarrow \mathbb{R}$ and a parameter-dependent force $\tilde f: T^*Q \times [0,\Delta t]\times\mathbb{R} \longrightarrow \mathbb{R}$ by

\begin{equation}
\tilde H(q,p,\tau,\mu) = H(q,p,\tau+\mu), \qquad\quad \tilde f(q,p,\tau,\mu) = f(q,p,\tau+\mu), \qquad\quad \text{for $\tau \in [0,\Delta t]$,}
\end{equation}

\noindent
and let $\tilde F_{\tau,\tau_0}:T^*Q\times\mathbb{R}\longrightarrow T^*Q$ denote the corresponding parametric Lagrange-d'Alembert flow. We then have $F_{t+\Delta t,t}(q,p) = \tilde F_{\Delta t,0}(q,p,t)$. Therefore, the technique proposed in Section~\ref{sec: Parametric Generalized Forced Hamiltonian Neural Networks} can be used to learn $F_{t+\Delta t,t}$ from data.

\subsection{Stochastic systems}
\label{sec: Stochastic systems}

Stochastic differential equations (SDEs) play an important role in modeling dynamical systems subject to internal or external random fluctuations \cite{ArnoldSDE,IkedaWatanabe1989,KloedenPlatenSDE,Kunita1997,MilsteinBook}. Within this class of problems, we are interested in stochastic forced Hamiltonian systems (see \cite{KrausTyranowski2019} and the references therein), which take the form

\begin{align}
\label{eq: Stochastic dissipative Hamiltonian system}
d_t q &= \frac{\partial H_0}{\partial p}dt + \sum_{i=1}^m\frac{\partial H_i}{\partial p}\circ dW^i(t), \nonumber \\
d_t p &= \bigg[-\frac{\partial H_0}{\partial q} + f_0(q,p) \bigg] dt + \sum_{i=1}^m \bigg[-\frac{\partial H_i}{\partial q}+f_i(q,p)\bigg]\circ dW^i(t),
\end{align}

\noindent
where $H_i=H_i(q,p)$ for $i=0,\ldots,m$ are the Hamiltonian functions, $f_i=f_i(q,p)$ are the forcing terms, $W(t)=(W^1(t),\ldots,W^m(t))$ is the standard $m$-dimensional Wiener process, and $\circ$ denotes Stratonovich integration. We use $d_t$ to denote the stochastic differential of stochastic processes (other than the Wiener process $W(t)$) to avoid confusion with the exterior derivative $d$ of differential forms. The system \eqref{eq: Stochastic dissipative Hamiltonian system} can be formally regarded as a time-dependent forced Hamiltonian system \eqref{eq: Time-dependent forced Hamiltonian system} with the randomized Hamiltonian given by $H(q,p,t) = H_0(q,p) + \sum_{i=1}^m H_i(q,p)\circ \dot W^i(t)$, and the randomized forcing given by $f(q,p,t) = f_0(q,p) + \sum_{i=1}^m f_i(q,p)\circ \dot W^i(t)$, where $H_0(q,p)$ and $f_0(q,p)$ are the deterministic Hamiltonian and forcing, respectively, and  $H_i(q,p)$, $f_i(q,p)$ represent the intensity of the noise. Such systems can serve to model, for instance, mechanical systems affected by uncertainty or error, which are presumed to result from random forcing, limited precision of experimental measurements, or unresolved physical processes on which the Hamiltonian of the underlying deterministic system might otherwise depend. Applications can be found in a wide range of models in physics, chemistry, and biology. Particular examples include molecular dynamics \cite{Skeel1999}, dissipative particle dynamics \cite{PengArai2022,Ripoll2001}, and collisional kinetic plasmas \cite{KrausTyranowski2019,LuMengTyranowski2025,Sonnendrucker2015,TyranowskiVlasovMaxwell}.

The stochastic flow $F_{t,t_0}:\Omega_s\times T^*Q \longrightarrow T^*Q$ for \Cref{eq: Stochastic dissipative Hamiltonian system} is time-dependent due to the fact that it is driven by the Wiener process $W(t)$, where $\Omega_s$ denotes the sample space of the underlying probability space. As shown in \cite{KrausTyranowski2019}, the stochastic system \eqref{eq: Stochastic dissipative Hamiltonian system} has an underlying stochastic variational principle which generalizes the Lagrange-d'Alembert principle \eqref{eq: Lagrange-d'Alembert principle}. This means that the map $F_{t,t_0}$ is a stochastic Lagrange-d'Alembert flow, and the stochastic forced Hamiltonian system \eqref{eq: Stochastic dissipative Hamiltonian system} can be numerically approximated in a structure-preserving way by using stochastic Lagrange-d'Alembert integrators which are defined by \Cref{eq: Lagrange-d'Alembert integrator} with a stochastic discrete Lagrangian $L_d: \Omega_s \times Q\times Q \longrightarrow \mathbb{R}^n$ and stochastic discrete forces $f_d^+,f_d^-:\Omega_s \times Q\times Q \longrightarrow \mathbb{R}^n$. One example of such an integrator is the stochastic implicit midpoint method,

\begin{align}
	\label{eq:Stochastic midpoint method}
	q_{k+1} &= q_k + \frac{\partial H_0}{\partial p} \bigg(\frac{q_k+q_{k+1}}{2},\frac{p_k+p_{k+1}}{2} \bigg)\Delta t
	               + \sum_{i=1}^m \frac{\partial H_i}{\partial p} \bigg(\frac{q_k+q_{k+1}}{2},\frac{p_k+p_{k+1}}{2} \bigg)\Delta W^i, \nonumber \\
	p_{k+1} &= p_k + \bigg[ -\frac{\partial H_0}{\partial q} \bigg(\frac{q_k+q_{k+1}}{2},\frac{p_k+p_{k+1}}{2} \bigg) + f_0\bigg(\frac{q_k+q_{k+1}}{2},\frac{p_k+p_{k+1}}{2} \bigg) \bigg] \Delta t \nonumber \\
	        &\phantom{= p_k}+ \sum_{i=1}^m \bigg[ -\frac{\partial H_i}{\partial q} \bigg(\frac{q_k+q_{k+1}}{2},\frac{p_k+p_{k+1}}{2} \bigg) + f_i\bigg(\frac{q_k+q_{k+1}}{2},\frac{p_k+p_{k+1}}{2} \bigg) \bigg]\Delta W^i,
	\end{align}

\noindent
which defines a stochastic discrete flow $\widehat F_{t_{k+1},t_k}: \Omega_s \times T^*Q \ni(q_k,p_k) \longmapsto (q_{k+1},p_{k+1})\in T^*Q$, where $\Delta t = t_{k+1}-t_k$ is the time step, and $\Delta W = W(t_{k+1})-W(t_k)$ is the increment of the Wiener process. As demonstrated in \cite{KrausTyranowski2019}, the scheme \eqref{eq:Stochastic midpoint method} is a stochastic Lagrange-d'Alembert integrator with the stochastic discrete Lagrangian and discrete forces given by

\begin{align}
\label{eq: Discrete Lagrangian and forces for the stochastic midpoint method}
L_d(q_k,q_{k+1}) &= \Delta t \bigg[ p_c\frac{\partial H_0}{\partial p}(q_c,p_c) -H_0(q_c,p_c) \bigg] +\sum_{i=1}^m \Delta W^i \bigg[ p_c\frac{\partial H_i}{\partial p}(q_c,p_c) -H_i(q_c,p_c) \bigg], \nonumber \\
f^-_d(q_k,q_{k+1}) &= \frac{1}{2}\Delta t  f_0(q_c,p_c) + \frac{1}{2}\sum_{i=1}^m \Delta W^i f_i(q_c,p_c), \nonumber \\
f^+_d(q_k,q_{k+1}) &= \frac{1}{2} \Delta t f_0(q_c,p_c) + \frac{1}{2}\sum_{i=1}^m \Delta W^i f_i(q_c,p_c),
\end{align}

\noindent
with $q_c=(q_k+q_{k+1})/2$, $p_c=(p_k+p_{k+1})/2$, where $p_k$ and $p_{k+1}$ are understood as functions of $q_k$ and $q_{k+1}$, implicitly defined by \Cref{eq:Stochastic midpoint method}. Note that stochasticity and time-dependence enter the definition of $\widehat F_{t_{k+1},t_k}$ via the Wiener process increments $\Delta W^i \sim N(0,\Delta t)$, which are normally distributed random variables. Let us instead consider the deterministic parameter-dependent Lagrange-d'Alembert map $\varphi:T^*Q\times \mathbb{R}^m \ni(q_k,p_k,\mu) \longmapsto (q_{k+1},p_{k+1}) \in T^*Q$ defined implicitly by the equations

\begin{align}
	\label{eq:Parameter-dependent midpoint method}
	q_{k+1} &= q_k + \frac{\partial H_0}{\partial p} \bigg(\frac{q_k+q_{k+1}}{2},\frac{p_k+p_{k+1}}{2} \bigg)\Delta t
	               + \sum_{i=1}^m \frac{\partial H_i}{\partial p} \bigg(\frac{q_k+q_{k+1}}{2},\frac{p_k+p_{k+1}}{2} \bigg)\mu^i, \nonumber \\
	p_{k+1} &= p_k + \bigg[ -\frac{\partial H_0}{\partial q} \bigg(\frac{q_k+q_{k+1}}{2},\frac{p_k+p_{k+1}}{2} \bigg) + f_0\bigg(\frac{q_k+q_{k+1}}{2},\frac{p_k+p_{k+1}}{2} \bigg) \bigg] \Delta t \nonumber \\
	        &\phantom{= p_k}+ \sum_{i=1}^m \bigg[ -\frac{\partial H_i}{\partial q} \bigg(\frac{q_k+q_{k+1}}{2},\frac{p_k+p_{k+1}}{2} \bigg) + f_i\bigg(\frac{q_k+q_{k+1}}{2},\frac{p_k+p_{k+1}}{2} \bigg) \bigg]\mu^i,
\end{align}

\noindent
which is generated by the deterministic parameter-dependent discrete Lagrangian and forces

\begin{align}
\label{eq: Discrete Lagrangian and forces for the parameter-dependent midpoint method}
\bar L_d(q_k,q_{k+1}; \mu) &= \Delta t \bigg[ p_c\frac{\partial H_0}{\partial p}(q_c,p_c) -H_0(q_c,p_c) \bigg] +\sum_{i=1}^m \mu^i \bigg[ p_c\frac{\partial H_i}{\partial p}(q_c,p_c) -H_i(q_c,p_c) \bigg], \nonumber \\
\bar f^-_d(q_k,q_{k+1}; \mu) &= \frac{1}{2}\Delta t  f_0(q_c,p_c) + \frac{1}{2}\sum_{i=1}^m \mu^i f_i(q_c,p_c), \nonumber \\
\bar f^+_d(q_k,q_{k+1}; \mu) &= \frac{1}{2} \Delta t f_0(q_c,p_c) + \frac{1}{2}\sum_{i=1}^m \mu^i f_i(q_c,p_c).
\end{align}

\noindent
It is straightforward to see that then we have

\begin{equation}
\label{eq: Stochastic flow as a parameter-dependent flow}
\widehat F_{t_{k+1},t_k}(\omega, q,p) = \varphi\big(q,p,W(\omega,t_{k+1})-W(\omega,t_k)\big),
\end{equation}

\noindent
where we explicitly stated the sample space argument $\omega \in \Omega_s$. The map $\varphi_\mu$ can be learnt from data using the technique developed in Section~\ref{sec: Parametric Generalized Forced Hamiltonian Neural Networks} if information about the discrete Wiener process paths $W(t_0), W(t_1), \ldots$ is also available. Such situations arise naturally in applications where large-scale Monte Carlo simulations of SDEs are performed repeatedly for different parameter values, for example in uncertainty quantification. In these settings, the realizations of the Wiener process are available and can be used directly as inputs to the learning procedure.

The stochastic midpoint method \eqref{eq:Stochastic midpoint method} uses only time increments $J_0\equiv \Delta t$ and Wiener process increments $J_i\equiv\Delta W^i$ for $i=1,\ldots,m$, and it is therefore strongly convergent of order $1/2$ in general, and of order 1 in the case of commutative noise \cite{MilsteinRepin,KrausTyranowski2019}. In order to achieve a higher order of convergence, a numerical scheme must involve higher-order multiple Stratonovich integrals \cite{KloedenPlatenSDE}

\begin{equation}
\label{eq: Multiple Stratonovich integrals}
J_{i_1,\ldots,i_l} = \int_{t_k}^{t_{k+1}} \!\!\cdots\!\! \int_{t_k}^{s_2} \circ dZ^{i_1}(s_1)\ldots\circ dZ^{i_l}(s_l),
\end{equation}

\noindent
for $0\leq i_1,\ldots,i_l \leq m$, where $Z=(Z^0,Z^1,\ldots,Z^m)$ with $Z^0(t)=t$ and $Z^i(t)=W^i(t)$ for $i=1,\ldots,m$. For instance, a scheme of order 3/2 must involve $J_i$ and $J_{i,0}$ \cite{HolmTyranowskiGalerkin,MilsteinRepin}, therefore the corresponding parameter-dependent Lagrange-d'Alembert map $\varphi_\mu$ will use a higher-dimensional parameter $\mu=(J_i, J_{j,0})$. Formally speaking, the exact stochastic flow $F_{t_{k+1},t_k}$ can be viewed as parametrized by the infinite sequence $\mu=(J_i, J_{i,j}, J_{i,j,l},\ldots)$ that appears in the Stratonovich-Taylor expansion of the solution $(q(t),p(t))$ of \Cref{eq: Stochastic dissipative Hamiltonian system} \cite{KloedenPlatenSDE}.


\section{Numerical experiments}
\label{sec: Numerical experiments}

In this section, we present the results of numerical experiments in which we tested the neural network architectures introduced in Sections~\ref{sec: Generalized Forced Hamiltonian Neural Networks} and~\ref{sec: Parametric Generalized Forced Hamiltonian Neural Networks}, and compared their performance with that of non-geometric residual neural networks (ResNets, \cite{ChenRubanova2018, HeResNets2016}). The experiments were implemented using the \texttt{GeometricMachineLearning.jl} package \cite{GeometricMachineLearning}. In all experiments, the hyperbolic tangent activation function ($\tanh$) was used, and all networks were trained using the Adam optimizer.

\subsection{Linearly damped harmonic oscillator}
\label{sec: Linearly damped harmonic oscillator}

As the first example we consider the linearly damped harmonic oscillator, which is a system of the form \eqref{eq: Autonomous forced Hamiltonian system} with

\begin{equation}
\label{eq: Linearly damped harmonic oscillator: Hamiltonian and forcing}
H(q,p) = \frac{1}{2}p^2 + \frac{1}{2}q^2, \qquad\qquad f(q,p)=-\nu p,
\end{equation}

\noindent
where $\nu$ is the friction coefficient. For this system an analytic solution is available and is given by

\begin{align}
\label{eq:Linearly damped oscillator---exact solution}
\bar q(t)&= \bar q_0 e^{-\frac{\nu}{2}t} \cos \omega t + \frac{1}{\omega}\Big(\bar p_0+\frac{\nu}{2} \bar q_0\Big) e^{-\frac{\nu}{2}t} \sin \omega t, \nonumber \\
\bar p(t)&= \bar p_0 e^{-\frac{\nu}{2}t} \cos \omega t - \frac{1}{\omega}\Big(\bar q_0+\frac{\nu}{2} \bar p_0\Big) e^{-\frac{\nu}{2}t} \sin \omega t,
\end{align}

\noindent
where $\bar q_0$ and $\bar p_0$ denote the initial conditions, the angular frequency is $\omega=\frac{1}{2}\sqrt{4-\nu^2}$, and we assume the underdamped case $0\leq \nu < 2$. The training data for our numerical experiments were created by sampling the exact solution \eqref{eq:Linearly damped oscillator---exact solution} for $0\leq t \leq T$ with the final time $T=13$, time step $\Delta t = 0.13$, and $N_\mathrm{train}=400$ initial conditions $(\bar q_0,\bar p_0)$ uniformly distributed in the square $-1\leq q,p\leq 1$ in the phase space, that is,

\begin{align}
\label{eq: Initial conditions for the training data}
\bar q_0^{i,j}=-1+\frac{2(i-1)}{\sqrt{N_\mathrm{train}}-1}, \qquad\quad \bar p_0^{i,j}=-1+\frac{2(j-1)}{\sqrt{N_\mathrm{train}}-1}, \qquad\quad \text{for $i,j=1,\ldots,\sqrt{N_\mathrm{train}}.$}
\end{align}

\noindent
A total of 6 training data sets were created for different values of the friction coefficient $\nu$, namely,

\begin{equation}
\label{eq: Friction coefficients for linearly damped harmonic oscillator}
\nu = 0.5,\quad 0.1,\quad 0.05,\quad 0.01,\quad 0.005,\quad 0.001.
\end{equation}

\noindent
Using each data set, a GFHNN and a ResNet network were trained for 10,000 epochs with the Adam optimizer in order to learn the flow map $F_{\Delta t}$ of the autonomous system \eqref{eq: Autonomous forced Hamiltonian system}. To make the comparison fair, the networks were chosen to have comparable numbers of trainable parameters, namely, 74 for the GFHNN (consisting of 3 LDE blocks with $\tilde T_i$, $\tilde U_i$, and $\tilde f_i$ each having one hidden layer of width 4) and 72 for the ResNet. Figure~\ref{fig:Damped_Oscillator_Training_loss_for_nu01} reports the loss evolution for a representative training run using data corresponding to $\nu=0.1$. All other runs exhibited comparable convergence behavior.

\begin{figure}
	\centering
		\includegraphics[width=1.00\textwidth]{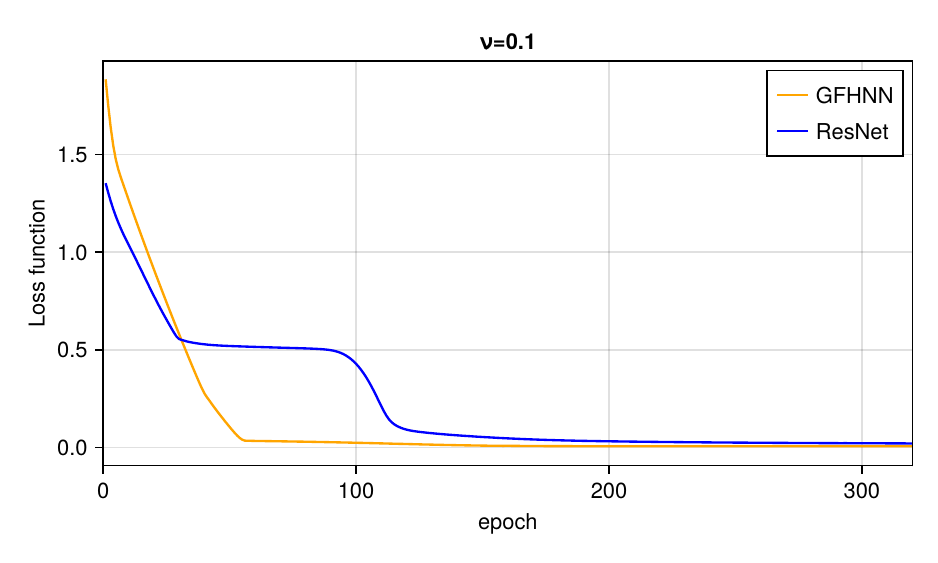}
	\caption{Convergence of the training loss for a representative training run using data generated from the linearly damped harmonic oscillator with damping parameter $\nu = 0.1$.}
\label{fig:Damped_Oscillator_Training_loss_for_nu01}
\end{figure}

\begin{figure}
	\centering
		\includegraphics[width=1.00\textwidth]{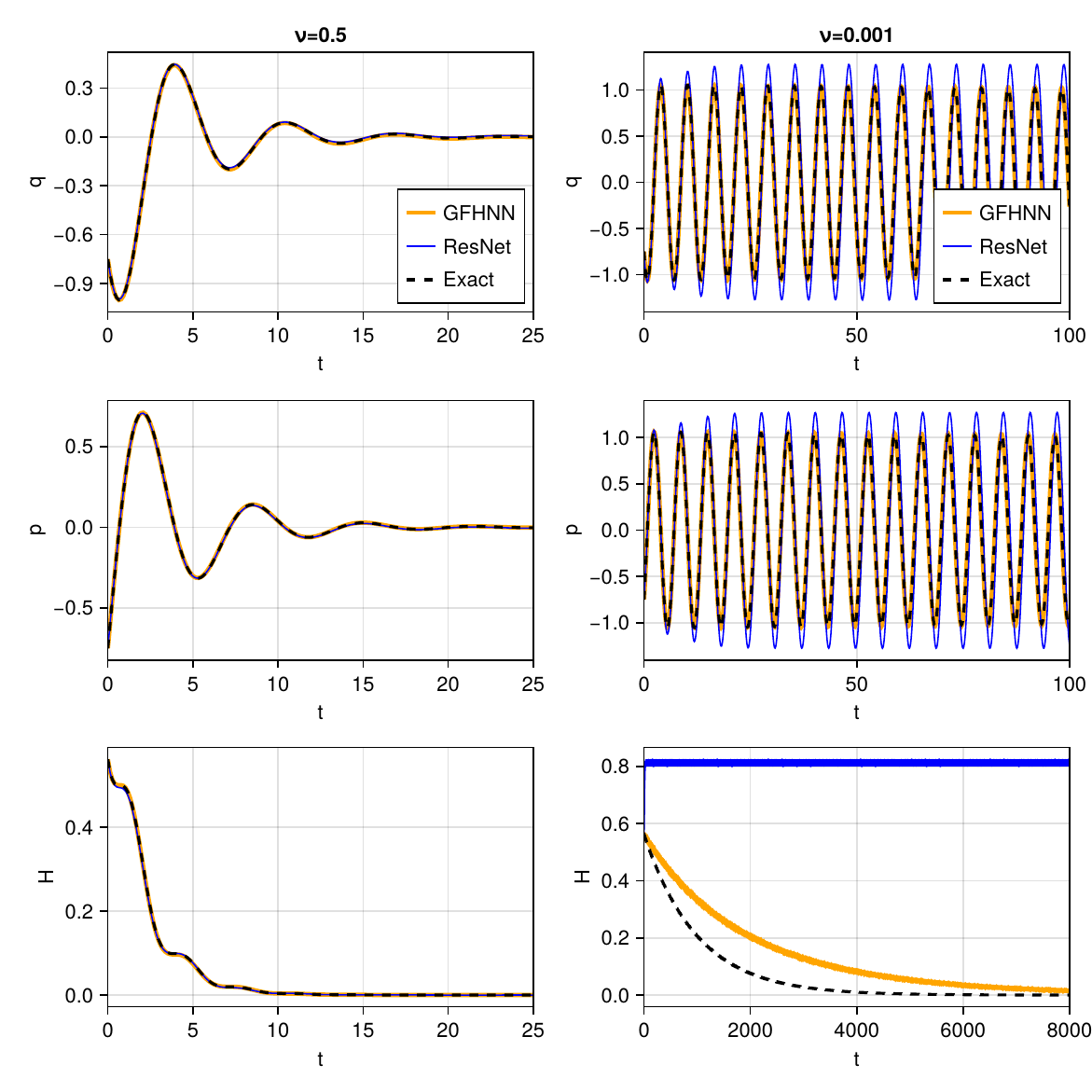}
	\caption{Comparison of trajectories generated by the GFHNN and ResNet flows for the linearly damped harmonic oscillator. The left column corresponds to the case $\nu = 0.5$, while the right column shows the case $\nu = 0.001$. In both cases, trajectories are generated from the same initial condition $(\bar q_0, \bar p_0) = (-0.75, -0.75)$. The first row displays the position $q(t)$, the second row the momentum $p(t)$, and the third row the Hamiltonian $H(t)$ evaluated along the corresponding trajectories.}
	\label{fig:Damped_Oscillator_Trajectories_for_nu05_and_nu0001}
\end{figure}

\begin{figure}
	\centering
		\includegraphics[width=1.00\textwidth]{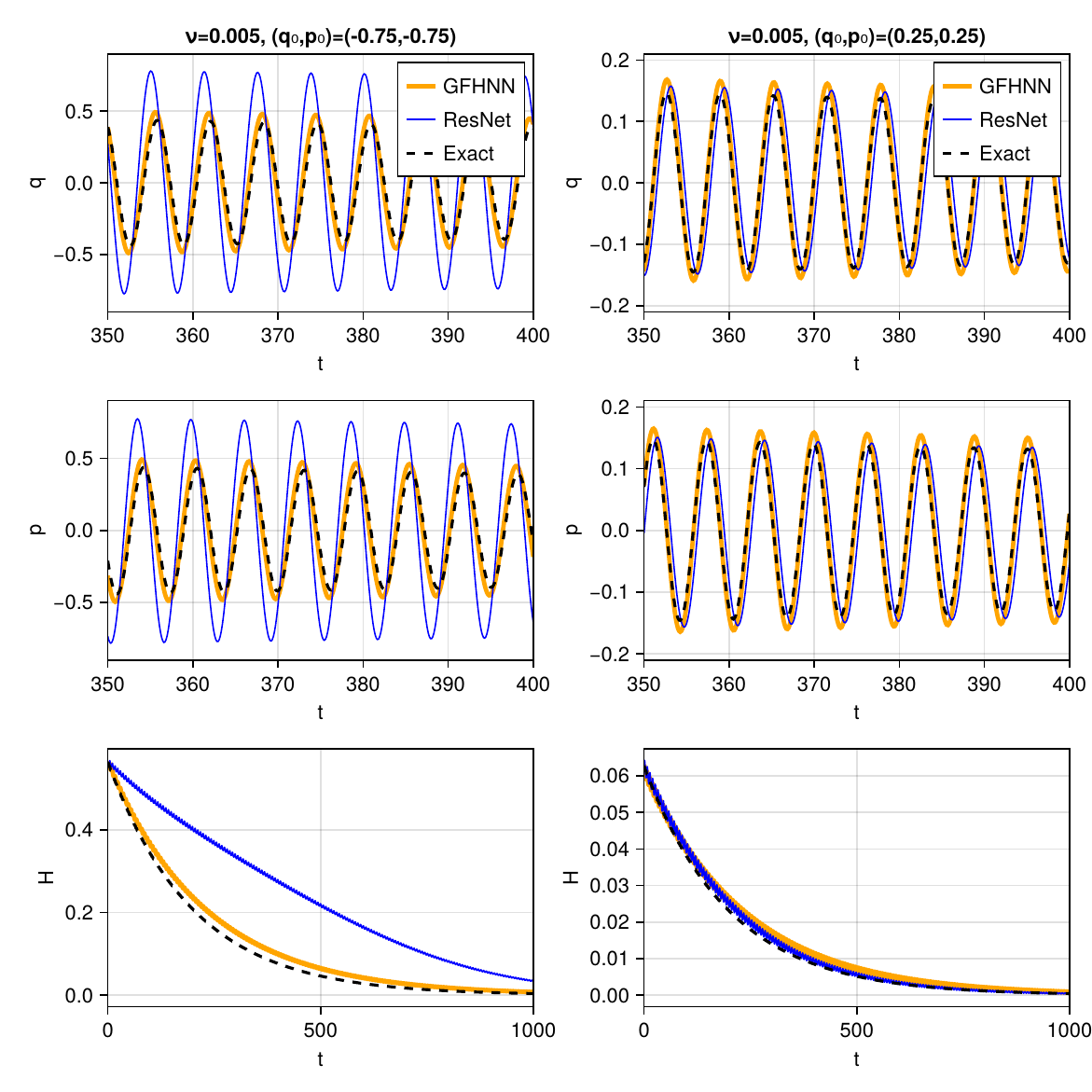}
	\caption{Comparison of trajectories generated by the GFHNN and ResNet flows for the linearly damped harmonic oscillator with $\nu = 0.005$. The left column corresponds to the initial condition $(\bar q_0, \bar p_0) = (-0.75, -0.75)$, while the right column corresponds to $(\bar q_0, \bar p_0) = (0.25, 0.25)$. In both cases, the first row displays the position $q(t)$, the second row the momentum $p(t)$, and the third row the Hamiltonian $H(t)$ evaluated along the corresponding trajectories.}
	\label{fig:Damped_Oscillator_Trajectories_for_nu0005}
\end{figure}

\begin{figure}
	\centering
		\includegraphics[width=1.00\textwidth]{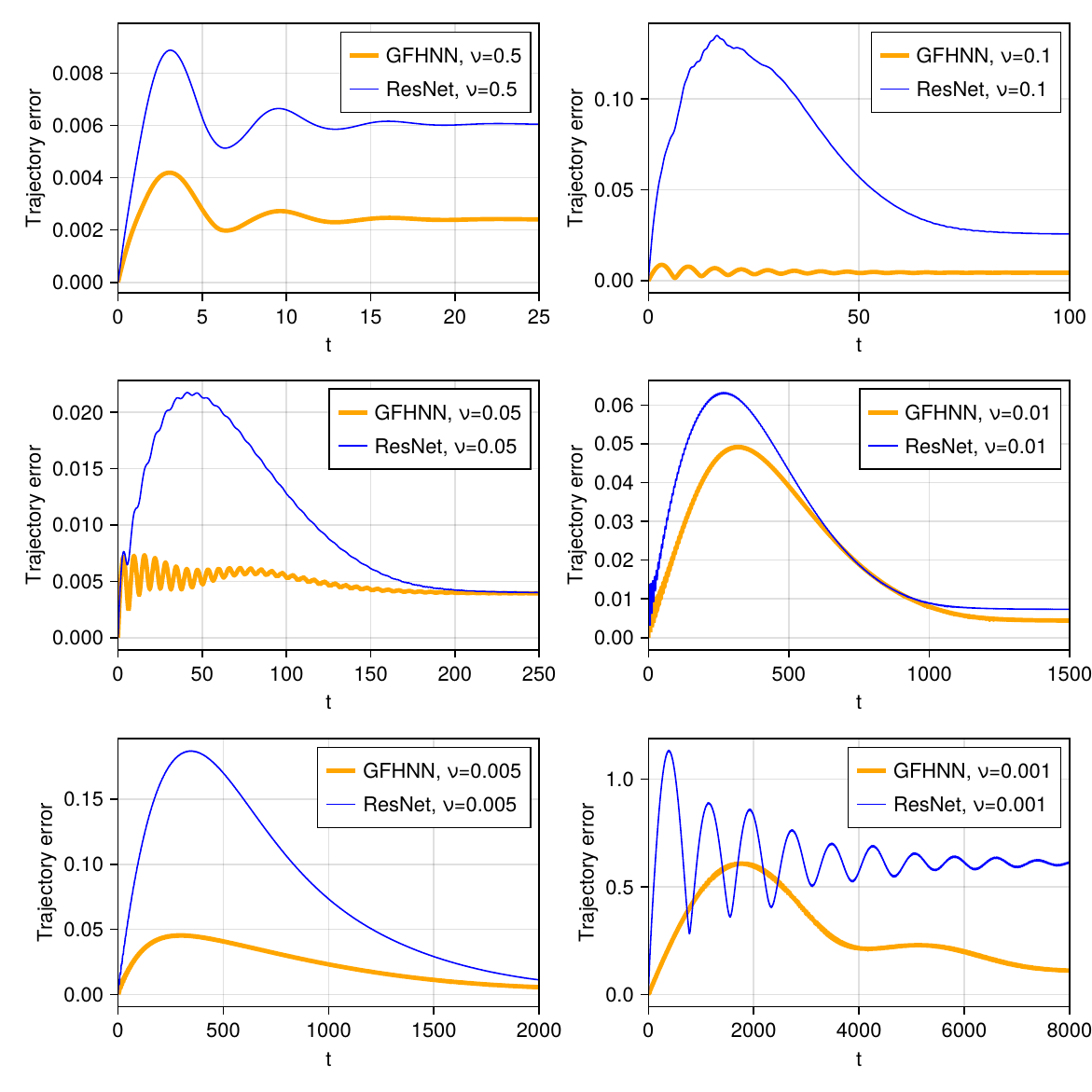}
	\caption{Averaged trajectory error $\epsilon_{\mathrm{traj}}(t)$ for trajectories generated by the GFHNN and ResNet flows for the linearly damped harmonic oscillator. For each value of the damping coefficient $\nu$, the error is computed by averaging over 49 test initial conditions distributed in the square $-1 \leq q,p \leq 1$ in phase space, none of which were used during training. Each subplot corresponds to a distinct value of $\nu$, as indicated in the legend.}
	\label{fig:Damped_Oscillator_Trajectory_Error}
\end{figure}

\begin{figure}
	\centering
		\includegraphics[width=1.00\textwidth]{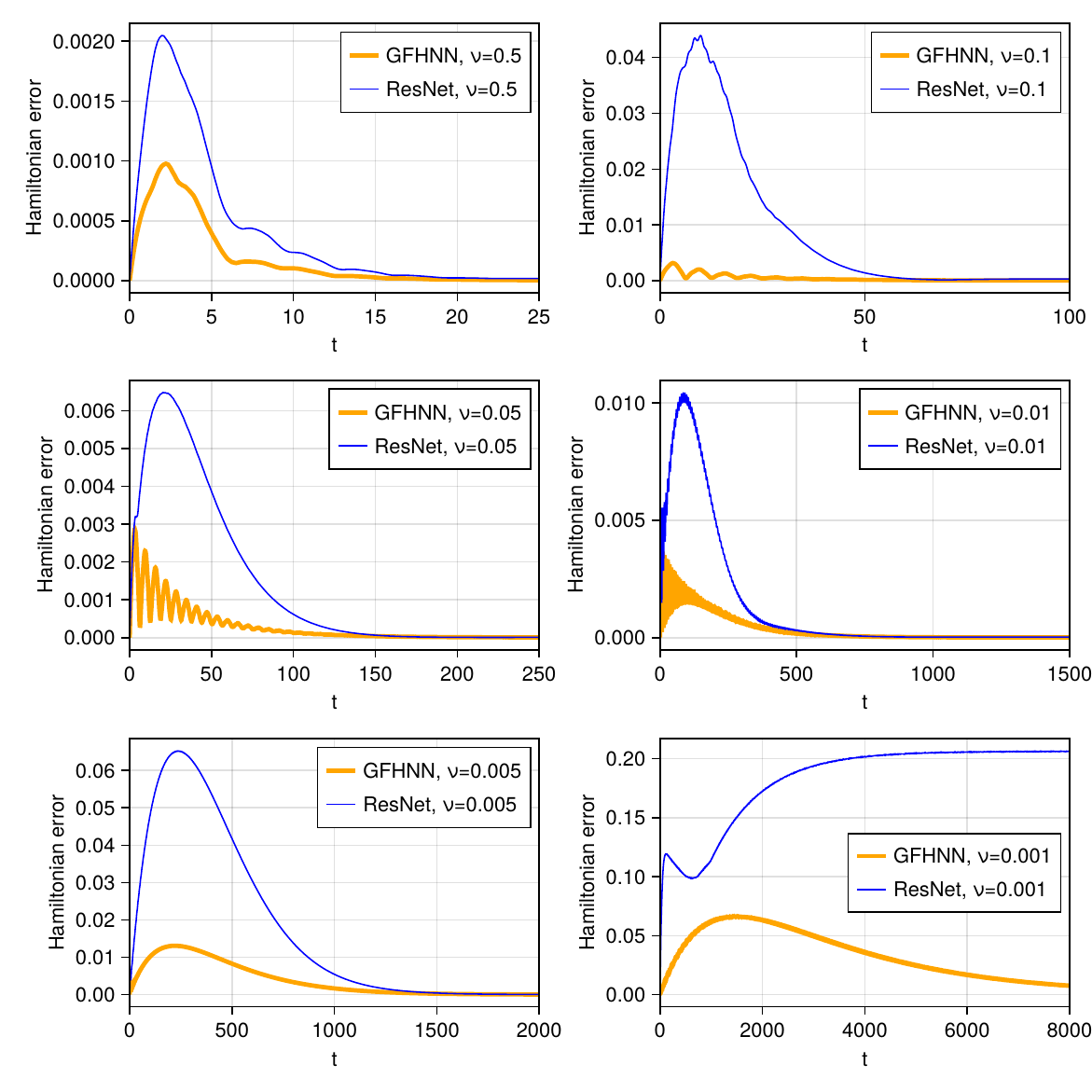}
	\caption{Averaged Hamiltonian error $\epsilon_H(t)$ for trajectories generated by the GFHNN and ResNet flows for the linearly damped harmonic oscillator. The error is computed by averaging over the same set of 49 test initial conditions used in Figure~\ref{fig:Damped_Oscillator_Trajectory_Error}. Each subplot corresponds to a distinct value of the damping coefficient $\nu$, as indicated in the legend.}
	\label{fig:Damped_Oscillator_Hamiltonian_Error}
\end{figure}

The learned flow was then used to generate trajectories from arbitrary initial conditions. We observed that the GFHNN flow outperformed the ResNet flow in terms of accuracy and stability, especially when generating trajectories over time intervals significantly longer than the characteristic time scale $t_{\mathrm{scale}} = 2\pi/\omega$ of the damped system~\eqref{eq: Linearly damped harmonic oscillator: Hamiltonian and forcing}. An illustrative example is shown in Figure~\ref{fig:Damped_Oscillator_Trajectories_for_nu05_and_nu0001}, where GFHNN and ResNet trajectories starting from the same initial condition $(\bar q_0, \bar p_0) = (-0.75, -0.75)$, together with the evolution of the Hamiltonian along these trajectories, are compared for the cases $\nu = 0.5$ and $\nu = 0.001$. In the first case, the solution is damped very rapidly compared to $t_{\mathrm{scale}} = 6.49$, and both the GFHNN and ResNet flows reproduce the exact behavior accurately. In contrast, in the second case the decay of the solution occurs over time intervals several orders of magnitude longer than the corresponding $t_{\mathrm{scale}} = 6.28$, and the geometric GFHNN flow captures the evolution of the exact solution significantly better than the non--structure-preserving ResNet flow.

However, the behavior of the neural-network--generated trajectories was also observed to depend on the choice of the initial condition. An illustrative example is shown in Figure~\ref{fig:Damped_Oscillator_Trajectories_for_nu0005}, where GFHNN and ResNet trajectories for the case $\nu = 0.005$ are compared for two different initial conditions, namely $(\bar q_0, \bar p_0) = (-0.75, -0.75)$ and $(\bar q_0, \bar p_0) = (0.25, 0.25)$. In the former case, the GFHNN trajectory outperforms the ResNet trajectory, whereas in the latter case the ResNet exhibits slightly better agreement with the reference solution. To account for this dependence, we considered a set of $N_\mathrm{test}=49$ test initial conditions distributed over the square $-1 \leq q,p \leq 1$ in phase space,

\begin{align}
\label{eq: Test initial conditions}
q_0^{i,j}=-\frac{3}{4}+\frac{1}{4}(i-1), \qquad\quad p_0^{i,j}=-\frac{3}{4}+\frac{1}{4}(j-1), \qquad\quad \text{for $i,j=1,\ldots,7,$}
\end{align}

\noindent
none of which belonged to the set of initial conditions \eqref{eq: Initial conditions for the training data} used to generate the training data. For each initial condition, we generated a trajectory $(q^{i,j}(t), p^{i,j}(t))$ and computed the averaged trajectory and Hamiltonian errors, defined respectively as

\begin{align}
\label{eq: Computation of Errors}
\epsilon_{\mathrm{traj}}(t)&=\frac{1}{49} \sum_{i,j=1}^7 \left\| \begin{pmatrix} q^{i,j}(t) \\ p^{i,j}(t) \end{pmatrix} -\begin{pmatrix} \bar q^{i,j}(t) \\ \bar p^{i,j}(t) \end{pmatrix} \right\|_2, \nonumber \\
\epsilon_{H}(t)&=\frac{1}{49} \sum_{i,j=1}^7 \Big| H\big(q^{i,j}(t), p^{i,j}(t)\big) - H\big(\bar q^{i,j}(t), \bar p^{i,j}(t)\big)\Big|,
\end{align}

\noindent
where $(\bar q^{i,j}(t), \bar p^{i,j}(t))$ denotes the exact solution \eqref{eq:Linearly damped oscillator---exact solution} corresponding to the initial condition \eqref{eq: Test initial conditions}. The resulting errors are shown in Figure~\ref{fig:Damped_Oscillator_Trajectory_Error} and Figure~\ref{fig:Damped_Oscillator_Hamiltonian_Error}, respectively, for all values of the damping coefficient listed in \eqref{eq: Friction coefficients for linearly damped harmonic oscillator}. As is evident from all plots, the geometric GFHNN flow demonstrates superior performance in all cases.

\subsection{Quadratically damped pendulum}
\label{sec: Quadratically damped pendulum}

As the second example, we consider the quadratically damped pendulum, which is again a system of the form \eqref{eq: Autonomous forced Hamiltonian system} with

\begin{equation}
\label{eq: Quadratically damped pendulum: Hamiltonian and forcing}
H(q,p) = \frac{1}{2}p^2 + \cos(q), \qquad\qquad f(q,p)=-\nu \cdot |p| p,
\end{equation}

\noindent
where $\nu$ is the friction coefficient. Unlike the linearly damped harmonic oscillator considered in \Cref{sec: Linearly damped harmonic oscillator}, this system does not admit a closed-form analytical solution. Therefore, we generate reference trajectories using the \emph{implicit midpoint method}. The training data for our numerical experiment were created by integrating the system $0\leq t \leq T$ with the final time $T=13$, the time step $\Delta t = 0.13$, and for 400 initial conditions $(\bar q_0,\bar p_0)$ uniformly distributed in the square $-1\leq q,p\leq 1$ in the phase space (see \eqref{eq: Initial conditions for the training data}). A total of 6 training data sets were created for different values of the friction coefficient $\nu$, namely,

\begin{equation}
\label{eq: Friction coefficients for quadratically damped pendulum}
\nu = 0.5,\quad 0.1,\quad 0.05,\quad 0.01,\quad 0.005,\quad 0.001.
\end{equation}

\noindent
Similarly to the experiments for the linearly damped harmonic oscillator, a GFHNN and a ResNet were trained for $10\,000$ epochs using the Adam optimizer in order to learn the flow $F_{\Delta t}$. All training runs exhibited convergence behavior comparable to that shown in \Cref{fig:Damped_Oscillator_Training_loss_for_nu01}.

\begin{figure}
	\centering
		\includegraphics[width=1.00\textwidth]{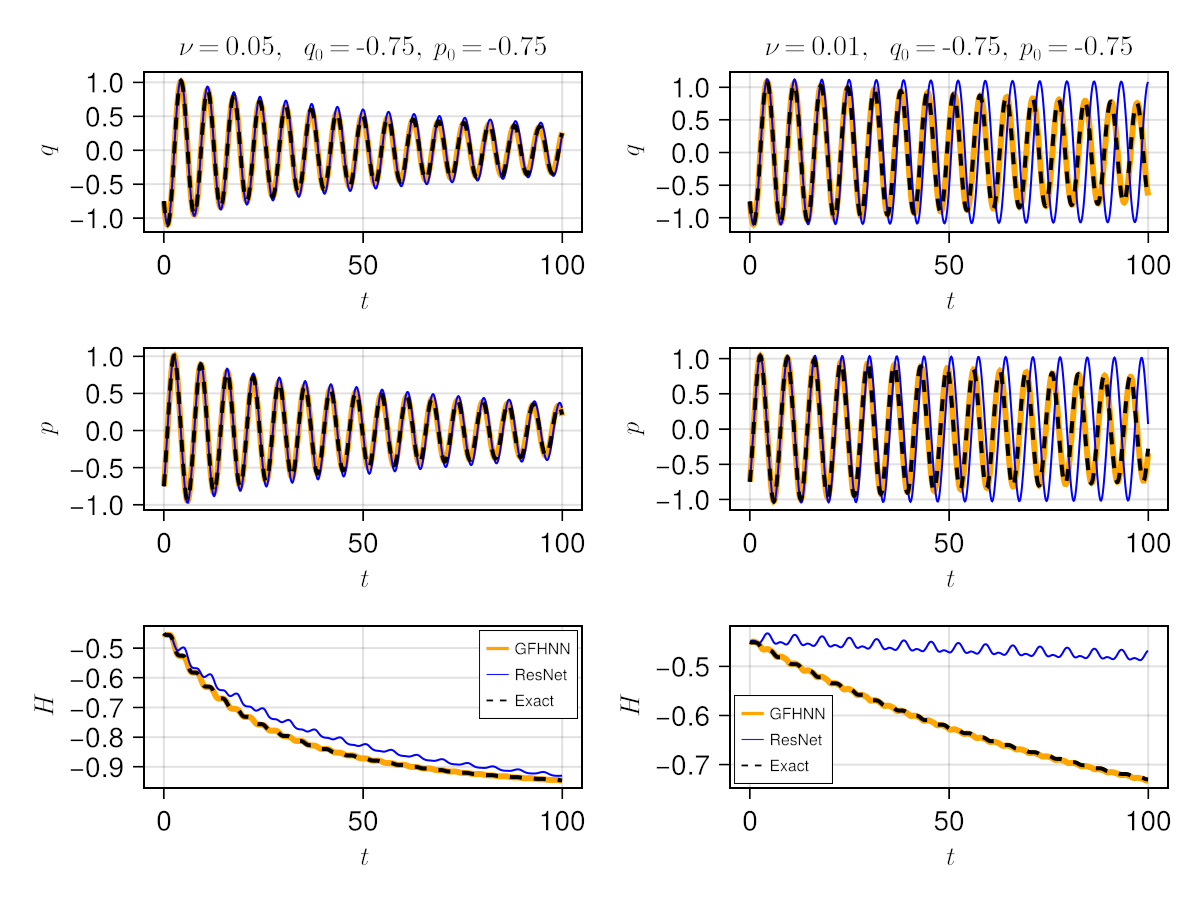}
	\caption{Comparison of trajectories generated by the GFHNN and ResNet flows for the quadratically damped pendulum. The left column corresponds to the case $\nu = 0.05$, while the right column shows the case $\nu = 0.01$. In both cases, trajectories are generated from the same initial condition $(\bar q_0, \bar p_0) = (-0.75, -0.75)$. The first row displays the position $q(t)$, the second row the momentum $p(t)$, and the third row the Hamiltonian $H(t)$ evaluated along the corresponding trajectories.}
	\label{fig:Damped_Oscillator_Trajectories_for_mu05_and_mu01}
\end{figure}

\begin{figure}
	\centering
		\includegraphics[width=1.00\textwidth]{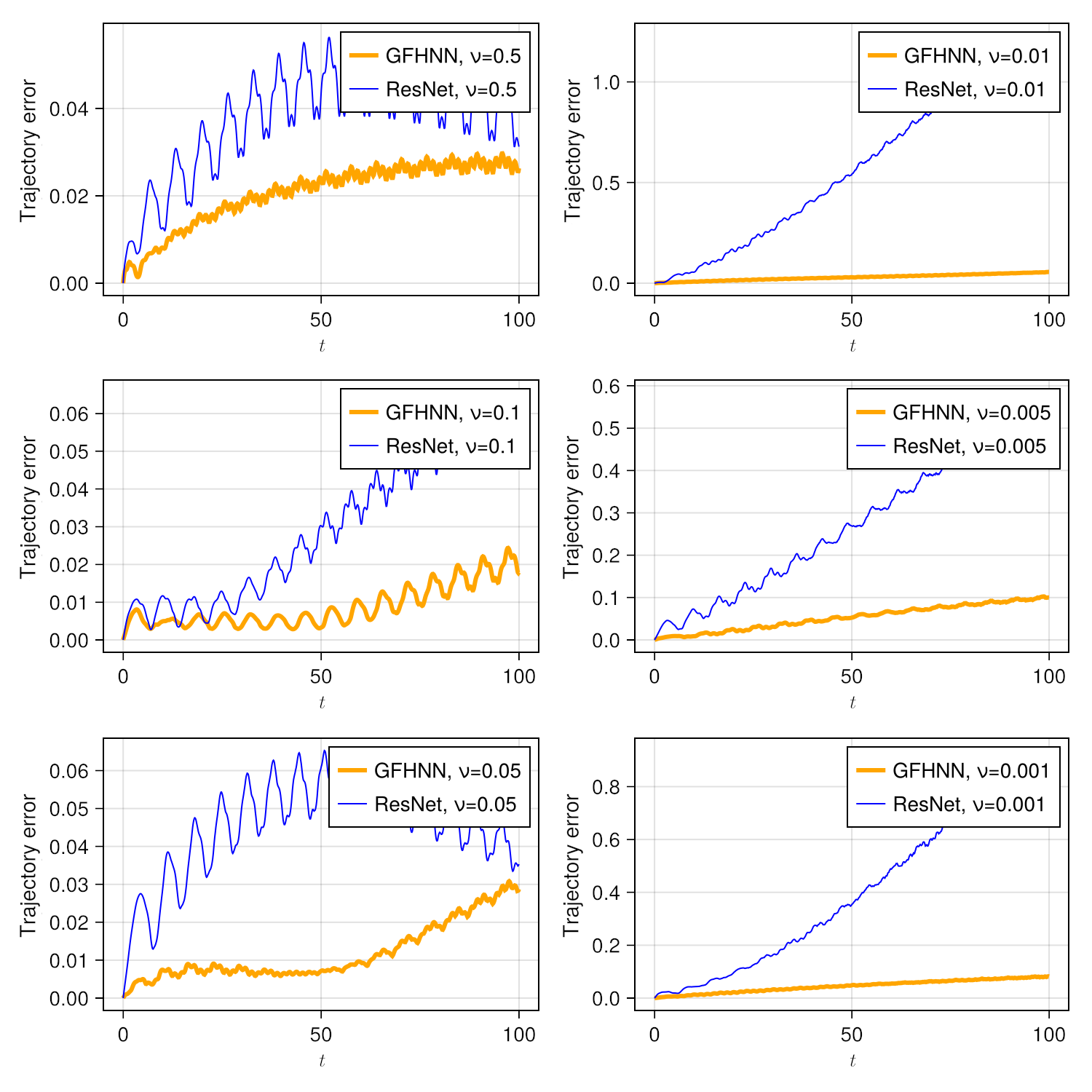}
	\caption{Averaged trajectory error $\epsilon_{\mathrm{traj}}(t)$ for trajectories generated by the GFHNN and ResNet flows for the quadratically damped pendulum. For each value of the damping coefficient $\nu$, the error is computed by averaging over 49 test initial conditions distributed in the square $-0.75 \leq q,p \leq 0.75$ in phase space, none of which were used during training. Each subplot corresponds to a distinct value of $\nu$, as indicated in the legend.}
	\label{fig:Damped_Pendulum_Trajectory_Error}
\end{figure}

\begin{figure}
	\centering
		\includegraphics[width=1.00\textwidth]{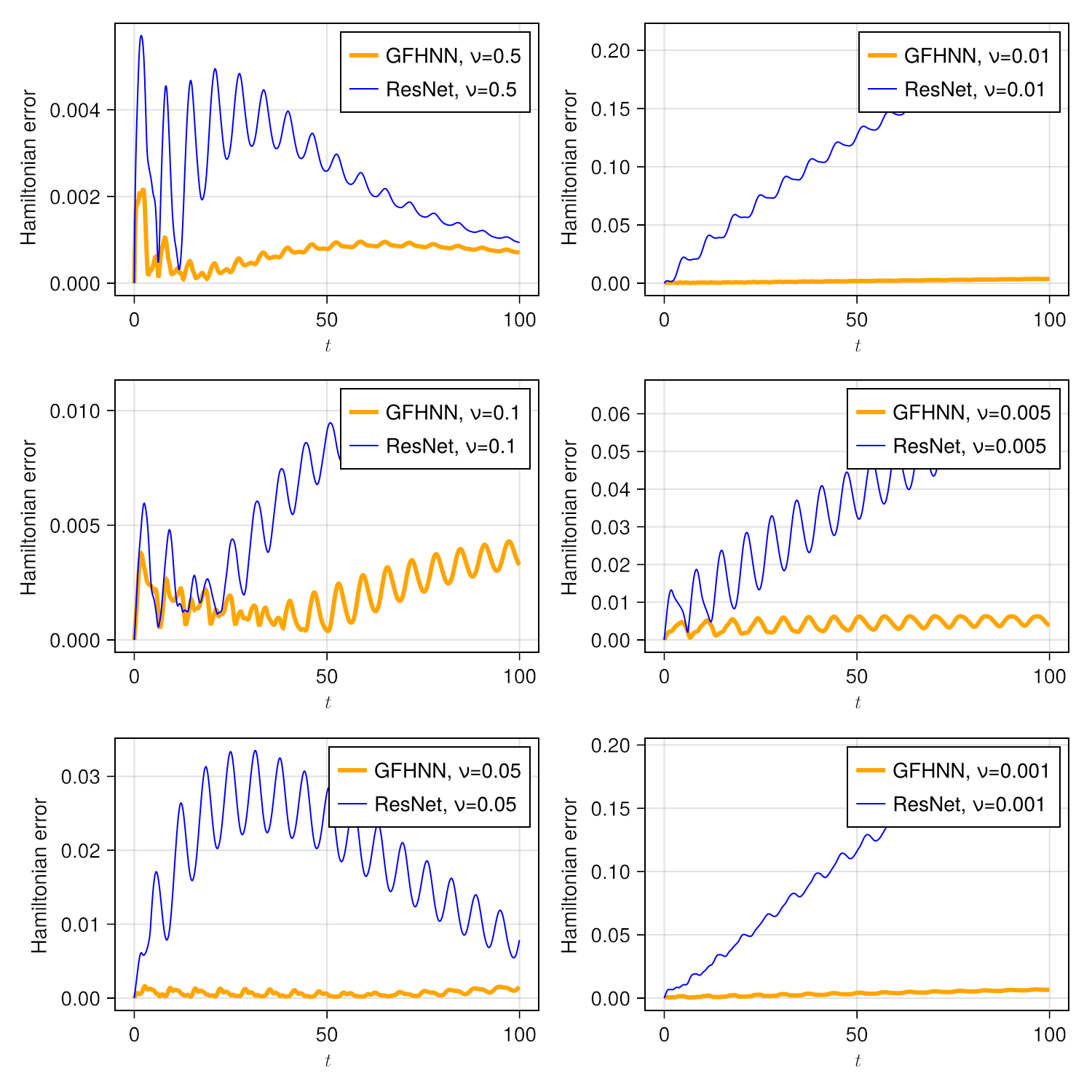}
	\caption{Averaged Hamiltonian error $\epsilon_H(t)$ for trajectories generated by the GFHNN and ResNet flows for the quadratically damped pendulum. The error is computed by averaging over the same set of 49 test initial conditions used in \Cref{fig:Damped_Pendulum_Trajectory_Error}. Each subplot corresponds to a distinct value of the damping coefficient $\nu$, as indicated in the legend. Also compare this to the case of the linearly damped harmonic oscillator in \Cref{fig:Damped_Oscillator_Hamiltonian_Error}.}
	\label{fig:Damped_Pendulum_Hamiltonian_Error}
\end{figure}

In \Cref{fig:Damped_Oscillator_Trajectories_for_mu05_and_mu01}, we compare representative trajectories generated by the trained GFHNN and ResNet models. More precisely, we compare the evolution of the position $q(t)$, the momentum $p(t)$, and the Hamiltonian $H(t)$ for two different values of the damping coefficient, namely $\nu=0.05$ and $\nu=0.01$, using the same initial condition $(\bar q_0, \bar p_0)=(-0.75,-0.75)$. In both cases, the GFHNN produces more accurate trajectories than the ResNet. This difference is particularly pronounced for $\nu=0.01$, where the ResNet appears unable to accurately reproduce the dissipative behavior of the system.

To account for the dependence on the initial condition, we again consider a set of $49$ test initial conditions distributed over the square $-0.75\leq q,p\leq0.75$ in phase space (see \eqref{eq: Test initial conditions}). None of these initial conditions belonged to the set of initial conditions \eqref{eq: Initial conditions for the training data} used to generate the training data. As before, we generated a trajectory $(q^{i,j}(t), p^{i,j}(t))$ for every initial condition and computed the averaged trajectory and Hamiltonian errors defined in \Cref{eq: Computation of Errors}. The resulting errors are shown in \Cref{fig:Damped_Pendulum_Trajectory_Error,fig:Damped_Pendulum_Hamiltonian_Error}, respectively, for all values of the damping coefficient listed in \eqref{eq: Friction coefficients for quadratically damped pendulum}. The plots show that the geometric GFHNN flow consistently achieves lower errors than the ResNet in all cases.

\subsection{Time-dependent damped harmonic oscillator}
\label{sec: Time-dependent damped harmonic oscillator}

In order to test the applicability of the proposed PGFHNN architecture to learning time-dependent flows, we consider the time-dependent forced Hamiltonian system \eqref{eq: Time-dependent forced Hamiltonian system} with

\begin{equation}
\label{eq: Time-dependent damped oscillator: Hamiltonian and forcing}
H(q,p,t) = \frac{1}{2}p^2 + \frac{1}{2}q^2 - V_0 q \sin \Omega t, \qquad\qquad f(q,p)=-\nu p,
\end{equation}

\noindent
where the Hamiltonian $H$ contains a time-periodic potential force with amplitude $V_0$ and angular frequency $\Omega$, and the system is damped by a non-conservative friction force with friction coefficient~$\nu$. For this system an analytic solution is available and is given by

\begin{align}
\label{eq:Time-dependent damped oscillator---exact solution}
\bar q(t)&= A e^{-\frac{\nu}{2}t} \sin \omega t + B e^{-\frac{\nu}{2}t} \cos \omega t + \gamma V_0 (1-\Omega^2) \sin \Omega t - \gamma V_0 \nu \Omega \cos \Omega t, \nonumber \\
\bar p(t)&= \Big(-\frac{\nu}{2}A-\omega B\Big) e^{-\frac{\nu}{2}t} \sin \omega t + \Big(\omega A - \frac{\nu}{2} B\Big) e^{-\frac{\nu}{2}t} \cos \omega t + \gamma V_0 \nu \Omega^2 \sin \Omega t + \gamma V_0 \Omega (1-\Omega^2) \cos \Omega t,
\end{align}

\noindent
with

\begin{align}
\label{eq: Parameters in the exact solution for the time-dependent damped oscillator}
\gamma = \frac{1}{(1-\Omega^2)^2+\nu^2 \Omega^2}, \qquad A = \frac{1}{\omega}\bigg(\bar p_0+\frac{\nu}{2} \bar q_0 + \gamma V_0 \Omega \Big(\Omega^2+\frac{\nu^2}{2}-1 \Big) \bigg), \qquad B = \bar q_0 + \gamma V_0\nu \Omega,
\end{align}

\noindent
where $\bar q_0$ and $\bar p_0$ denote the initial conditions, the damped natural angular frequency is $\omega=\frac{1}{2}\sqrt{4-\nu^2}$, and we assume the underdamped case $0\leq \nu < 2$. The flow map $F_{t,t_0}$ associated with this system is time-dependent. Since the Hamiltonian is periodic in time with period $T_\mathrm{p}=2\pi/\Omega$, one can easily verify that $F_{t+T_\mathrm{p},t_0+T_\mathrm{p}}=F_{t,t_0}$.

The training data for our numerical experiments were created by choosing $V_0=1$, $\Omega=3.5$, and $\nu=0.25$, and sampling the exact solution \eqref{eq:Linearly damped oscillator---exact solution} for $0\leq t \leq T$ with the final time $T=5T_\mathrm{p}\approx 8.98$, time step $\Delta t = T/100\approx 0.0898$, and $N_\mathrm{train}=625$ initial conditions $(\bar q_0,\bar p_0)$ defined in \eqref{eq: Initial conditions for the training data}. Using this data set, a PGFHNN and a parametric ResNet, the latter obtained by augmenting the network input with the parameter, were trained for 300 epochs with the Adam optimizer to learn the time-dependent flow map $F_{t+\Delta t, t}$ of the system \eqref{eq: Time-dependent forced Hamiltonian system}, with time $t$ treated as a parameter. To make the comparison fair, the networks were chosen to have comparable numbers of trainable parameters, namely, 186 for the PGFHNN (consisting of 3 LDE blocks with $\tilde T_i$, $\tilde U_i$, and $\tilde f_i$ each having 2 hidden layers of width 2) and 180 for the parametric ResNet. Since the ResNet turned out to yield significantly worse predictions than the PGFHNN, an additional ResNet was trained using a substantially larger training data set with $N_\mathrm{train}=4761$.

\begin{figure}
	\centering
		\includegraphics[width=1.00\textwidth]{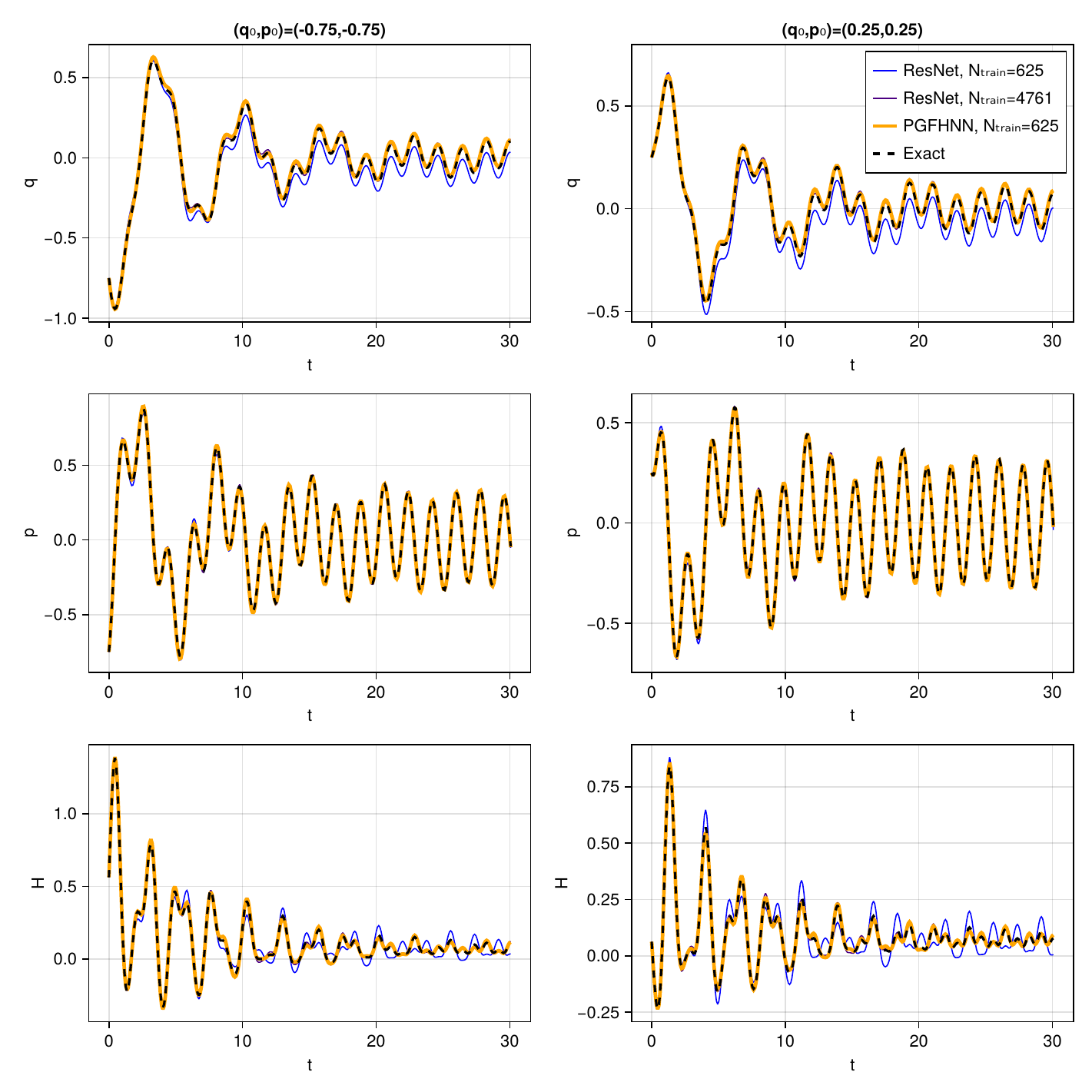}
	\caption{Comparison of trajectories generated by the PGFHNN and parametric ResNet flows for the time-dependent damped harmonic oscillator with $V_0=1$, $\Omega=3.5$, and $\nu = 0.25$. The left column corresponds to the initial condition $(\bar q_0, \bar p_0) = (-0.75, -0.75)$, while the right column corresponds to $(\bar q_0, \bar p_0) = (0.25, 0.25)$. In both cases, the first row displays the position $q(t)$, the second row the momentum $p(t)$, and the third row the Hamiltonian $H(t)$ evaluated along the corresponding trajectories. Note that the plots corresponding to the ResNet with $N_\mathrm{train}=4761$ and the PGFHNN overlap very closely and are therefore nearly indistinguishable in the figure. }
	\label{fig:Time_Dependent_Damped_Oscillator_Trajectories}
\end{figure}

\begin{figure}
	\centering
		\includegraphics[width=1.00\textwidth]{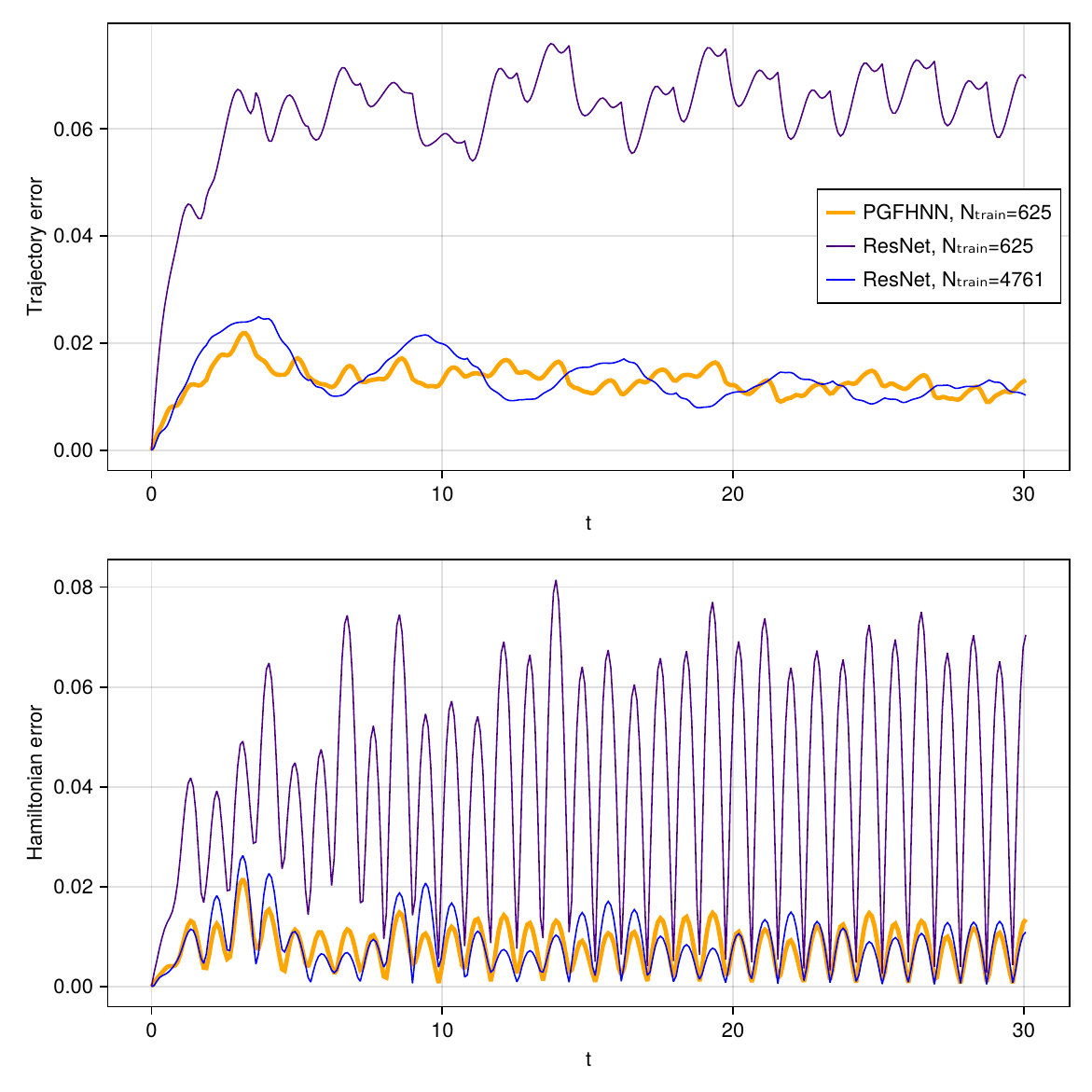}
	\caption{Averaged trajectory $\epsilon_{\mathrm{traj}}(t)$ (\emph{Top}) and Hamiltonian $\epsilon_H(t)$ (\emph{Bottom}) errors for trajectories generated by the PGFHNN and parametric ResNet flows for the time-dependent damped harmonic oscillator with $V_0=1$, $\Omega=3.5$, and $\nu = 0.25$. The errors are computed by averaging over 49 test initial conditions distributed in the square $-1 \leq q,p \leq 1$ in phase space, none of which were used during training. In order to reach a comparable accuracy, the non-structure-preserving ResNet needed a 7.6-times larger training data set than the geometric PGFHNN.}
	\label{fig:Time_Dependent_Damped_Oscillator_TrajectoryHamiltonian_Error}
\end{figure}

The learned flow was then used to generate trajectories from arbitrary initial conditions. Because the neural networks were trained only on the interval $t \in [0,T]$, direct evaluation for $t > T$ results in substantial errors. To mitigate this issue, we periodically extended the learned models beyond $[0,T]$, consistent with the time periodicity of the exact flow map. An illustrative example is shown in Figure~\ref{fig:Time_Dependent_Damped_Oscillator_Trajectories}, where PGFHNN and ResNet trajectories are compared for two different initial conditions, namely $(\bar q_0, \bar p_0) = (-0.75, -0.75)$ and $(\bar q_0, \bar p_0) = (0.25, 0.25)$. One may observe that, when trained on the same data set with $N_\mathrm{train}=625$, the ResNet yields a less accurate evolution than the PGFHNN. In fact, in order to reach the level of accuracy of the PGFHNN, the ResNet required a larger training data set with \(N_\mathrm{train}=4761\). To further investigate this behavior, we calculated the trajectory and Hamiltonian errors \eqref{eq: Computation of Errors}, averaged over 49 trajectories with the test initial conditions \eqref{eq: Test initial conditions}. The results are depicted in Figure~\ref{fig:Time_Dependent_Damped_Oscillator_TrajectoryHamiltonian_Error}, and they clearly show that, due to the lack of structure preservation, the ResNet needs more training data to achieve precision comparable to that of the PGFHNN.

\subsection{Forced Kubo oscillator}
\label{sec: Forced Kubo oscillator}

As a final example, we present the results of our numerical experiments for stochastic systems. We consider the forced Kubo oscillator, which is a stochastic forced Hamiltonian system of the form \eqref{eq: Stochastic dissipative Hamiltonian system} with the Hamiltonians and forcing terms, respectively,

\begin{align}
\label{eq: Forced Kubo: Hamiltonians and forces}
H_0(q,p)&=p^2/2+q^2/2, &  f_0(q,p)&=-\nu p, \nonumber\\
H_1(q,p)&=\beta(p^2/2+q^2/2), & f_1(q,p)&=-\beta \nu p,
\end{align}

\noindent
where $\nu$ is the damping coefficient, and $\beta$ is the noise intensity. It is an example of an oscillator with a fluctuating frequency and it was first introduced in the context of line-shape theory \cite{Anderson1954,Kubo1954}. It has since found numerous applications in mechanical systems, turbulence, laser theory, and wave propagation \cite{VanKampen1976}, magnetic resonance spectroscopy and nonlinear spectroscopy \cite{MukamelBook1995}, single molecule spectroscopy \cite{Jung2003}, and stochastic resonance \cite{ChaudhuriMicroscopic2009,Chaudhuri2010,ChaudhuriNonequilibrium2009,Gitterman2004}. The Kubo oscillator also serves as a prototype for multiplicative stochastic processes, and since its solutions can be calculated analytically, it is frequently used to validate numerical algorithms \cite{Fox1987,MaDing2015,MilsteinRepin,SunWang2016}. It is straightforward to verify that the exact solution is given by

\begin{align}
\label{eq:Forced Kubo oscillator---exact solution}
\bar q(t)&= \bar q_0 e^{-\frac{\nu}{2}(t+\beta W(t))} \cos \omega \big(t+\beta W(t)\big) + \frac{1}{\omega}\Big(\bar p_0+\frac{\nu}{2} \bar q_0\Big) e^{-\frac{\nu}{2}(t+\beta W(t))} \sin \omega \big(t+\beta W(t)\big), \nonumber \\
\bar p(t)&= \bar p_0 e^{-\frac{\nu}{2}(t+\beta W(t))} \cos \omega \big(t+\beta W(t)\big) - \frac{1}{\omega}\Big(\bar q_0+\frac{\nu}{2} \bar p_0\Big) e^{-\frac{\nu}{2}(t+\beta W(t))} \sin \omega \big(t+\beta W(t)\big),
\end{align}

\noindent
where $\bar q_0$ and $\bar p_0$ denote the initial conditions, the angular frequency is $\omega=\frac{1}{2}\sqrt{4-\nu^2}$, and we assume the underdamped regime $0\leq \nu < 2$; see \cite{KrausTyranowski2019}. Note that \eqref{eq:Forced Kubo oscillator---exact solution} coincides with the solution of the deterministic damped harmonic oscillator \eqref{eq:Linearly damped oscillator---exact solution}, with the time argument shifted by $\beta W(t)$.

The training data for our experiment were generated by choosing $\beta=0.5$ and sampling the exact solution \eqref{eq:Forced Kubo oscillator---exact solution} for $0\leq t \leq T$, with final time $T=7$ and time step $\Delta t = 0.07$. We used $N_\mathrm{train}=100$ initial conditions $(\bar q_0,\bar p_0)$ defined in \eqref{eq: Initial conditions for the training data}, and $M_\mathrm{train}=10$ independent sample paths of the Wiener process. Two training data sets were created for different values of the friction coefficient $\nu$, namely,

\begin{equation}
\label{eq: Friction coefficients for the forced Kubo oscillator}
\nu = 0.25,\quad\quad \text{and} \quad\quad \nu=0.01.
\end{equation}

\noindent
Using these data sets, a PGFHNN and a parametric ResNet were trained for 300 epochs with the Adam optimizer to learn the parameter-dependent Lagrange-d'Alembert map $\varphi_\mu$ underlying the stochastic flow $F_{t+\Delta t, t}$ of the system \eqref{eq: Stochastic dissipative Hamiltonian system}, as in \eqref{eq: Stochastic flow as a parameter-dependent flow}, with the increment of the Wiener process $\Delta W$ treated as a parameter. To make the comparison fair, the networks were chosen to have comparable numbers of trainable parameters, namely, 186 for the PGFHNN (consisting of 3 LDE blocks with $\tilde T_i$, $\tilde U_i$, and $\tilde f_i$ each having 2 hidden layers of width 2) and 180 for the parametric ResNet.

\begin{figure}
	\centering
		\includegraphics[width=1.00\textwidth]{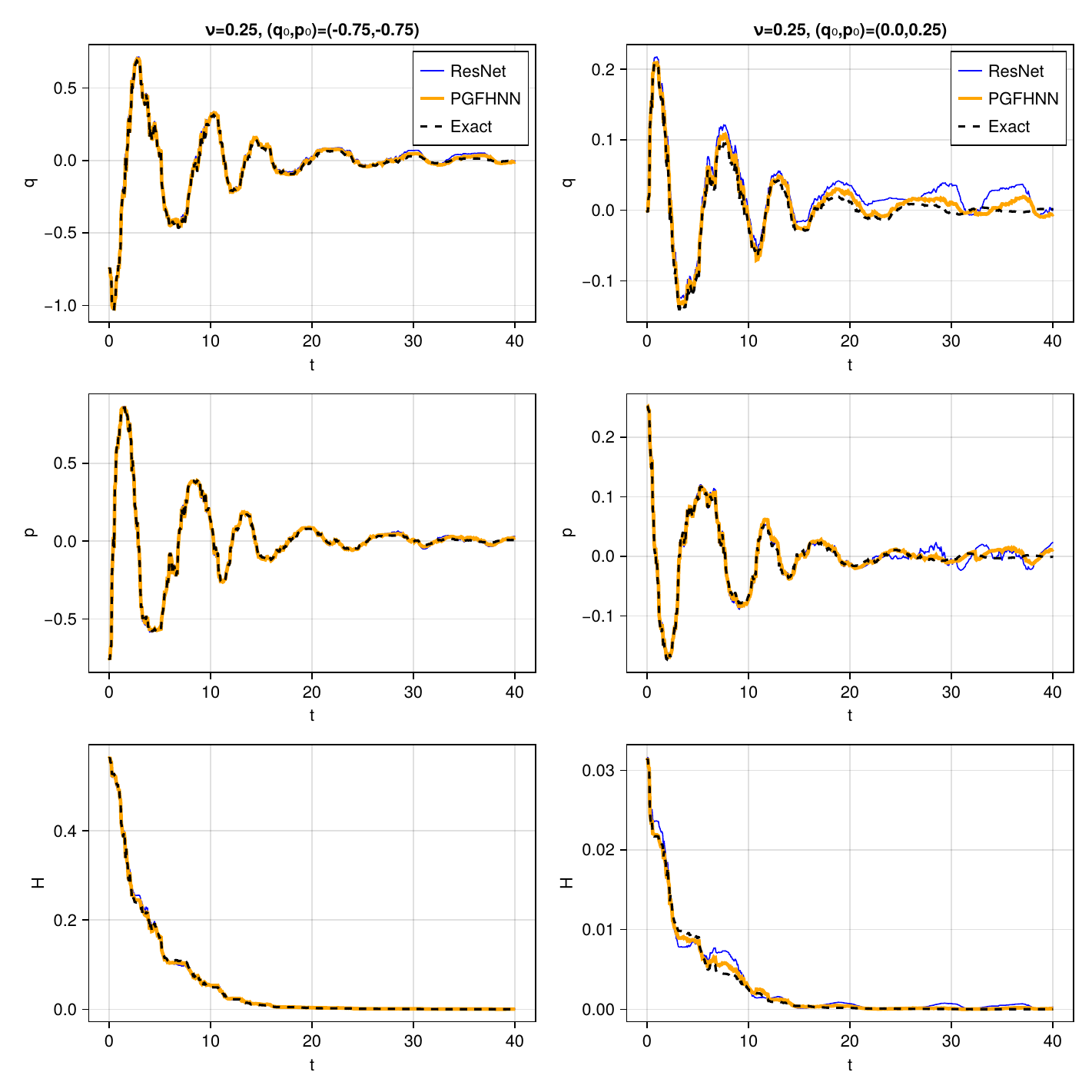}
	\caption{Comparison of trajectories generated by the PGFHNN and parametric ResNet flows for the forced Kubo oscillator with $\beta=0.5$ and $\nu = 0.25$. The left column corresponds to the initial condition $(\bar q_0, \bar p_0) = (-0.75, -0.75)$, while the right column corresponds to $(\bar q_0, \bar p_0) = (0, 0.25)$. In both cases, the first row displays the position $q(t)$, the second row the momentum $p(t)$, and the third row the Hamiltonian $H(t)$ evaluated along the corresponding trajectories.}
	\label{fig:Forced_Kubo_Trajectories_nu025}
\end{figure}

\begin{figure}
	\centering
		\includegraphics[width=1.00\textwidth]{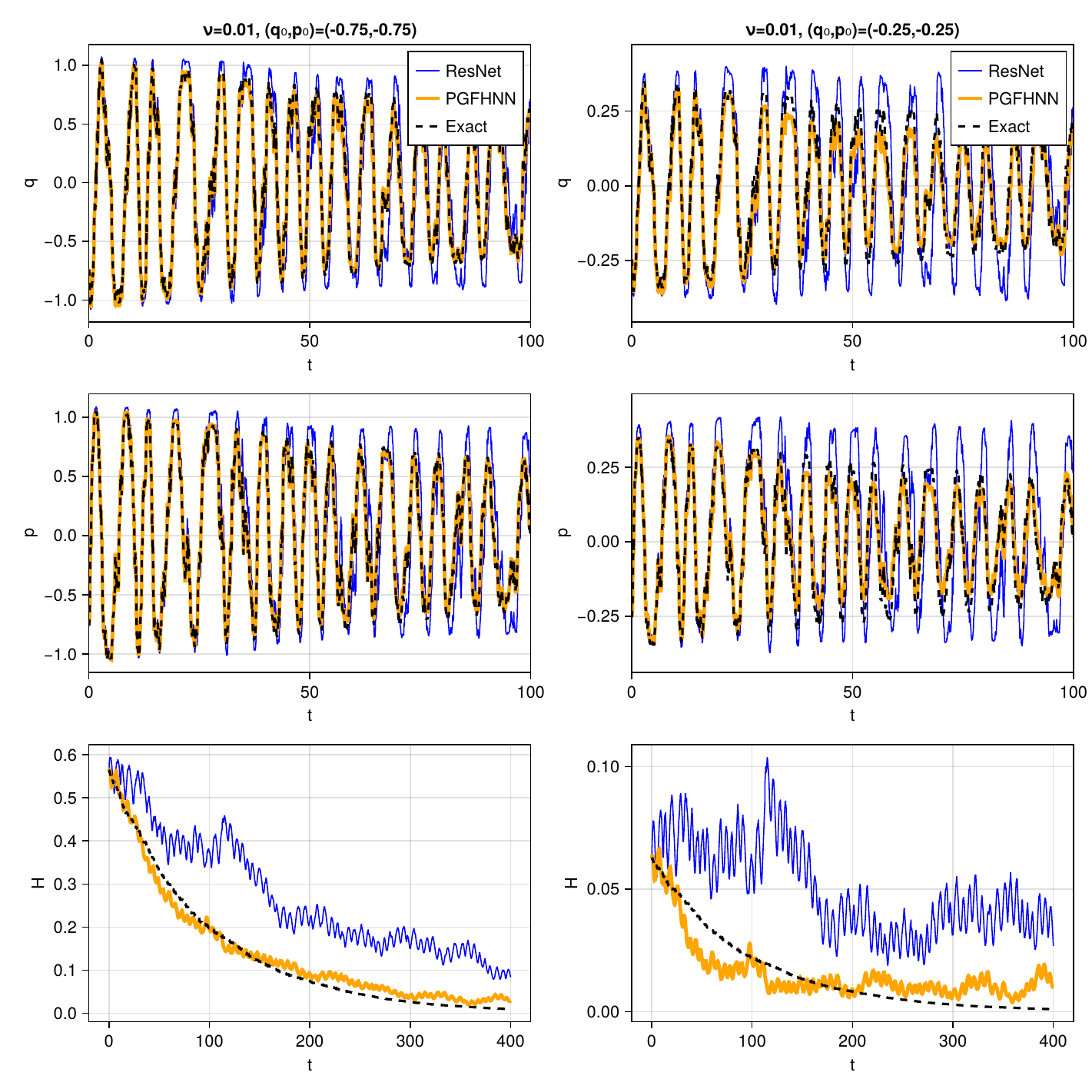}
	\caption{Comparison of trajectories generated by the PGFHNN and parametric ResNet flows for the forced Kubo oscillator with $\beta=0.5$ and $\nu = 0.01$. The left column corresponds to the initial condition $(\bar q_0, \bar p_0) = (-0.75, -0.75)$, while the right column corresponds to $(\bar q_0, \bar p_0) = (-0.25, -0.25)$. In both cases, the first row displays the position $q(t)$, the second row the momentum $p(t)$, and the third row the Hamiltonian $H(t)$ evaluated along the corresponding trajectories.}
	\label{fig:Forced_Kubo_Trajectories_nu001}
\end{figure}

\begin{figure}
	\centering
		\includegraphics[width=1.00\textwidth]{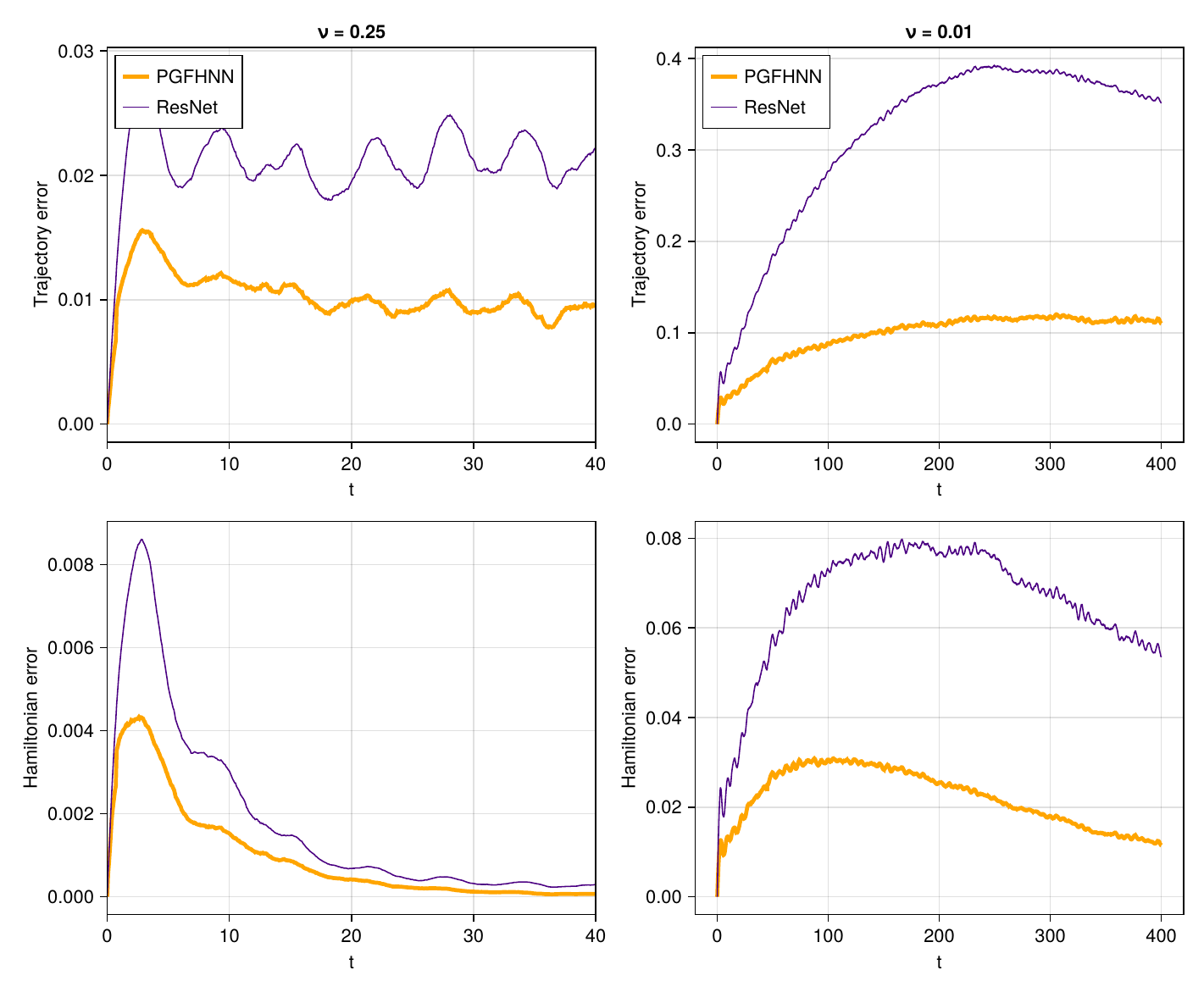}
	\caption{Averaged trajectory $\epsilon_{\mathrm{traj}}(t)$ (\emph{Top}) and Hamiltonian $\epsilon_H(t)$ (\emph{Bottom}) errors for trajectories generated by the PGFHNN and parametric ResNet flows for the forced Kubo oscillator with $\beta=0.5$. The left column corresponds to the friction coefficient $\nu=0.25$, while the right column corresponds to $\nu=0.01$. In both cases, the errors are computed by averaging over 49 test initial conditions distributed in the square $-1 \leq q,p \leq 1$ in phase space, and 49 independent sample paths of the Wiener process, none of which were used during training.}
	\label{fig:Forced_Kubo_TrajectoriesHamiltonian_Error}
\end{figure}

The learned flow was then used to generate trajectories from arbitrary initial conditions, and for arbitrary sample paths of the Wiener process. Illustrative examples are shown in Figure~\ref{fig:Forced_Kubo_Trajectories_nu025} and Figure~\ref{fig:Forced_Kubo_Trajectories_nu001}, where PGFHNN and ResNet trajectories are compared for the $\nu=0.25$ and $\nu=0.01$ data sets, respectively, in each case for two different initial conditions. One may observe that the quality of the ResNet trajectories deteriorates faster than that of the PGFHNN trajectories, especially in the $\nu=0.01$ case, where the damping of the solution occurs over a longer time interval. To further investigate this behavior, we calculated the trajectory and Hamiltonian errors~\eqref{eq: Computation of Errors}, averaged over $N_\mathrm{test}\cdot M_\mathrm{test}=2401$ trajectories, corresponding to $N_\mathrm{test}=49$ initial conditions \eqref{eq: Test initial conditions} and $M_\mathrm{test}=49$ independent sample paths of the Wiener process. The results are depicted in Figure~\ref{fig:Forced_Kubo_TrajectoriesHamiltonian_Error}, and they clearly show that, due to the lack of structure preservation, the ResNet generates less accurate solutions than the PGFHNN.

\section{Summary}
\label{sec: Summary}

In this work, we developed a geometric framework for learning deterministic and stochastic forced Hamiltonian systems with neural networks. Our construction is motivated by the Lagrange-d'Alembert principle and the theory of variational integrators, which provide a natural geometric description of mechanical systems subject to external forcing. We reviewed forced Hamiltonian systems and Lagrange-d'Alembert integrators, introduced the notion of a Lagrange-d'Alembert map, and established a $C^r$ convergence theorem for first-order one-step methods. These results provide a mathematical foundation for approximating forced Hamiltonian flows by compositions of geometric numerical integrators.

Using this foundation, we proposed Generalized Forced Hamiltonian Neural Networks (GFHNNs), a class of structure-preserving neural networks obtained by concatenating Lagrange-d'Alembert-Euler maps whose kinetic and potential energies, together with the forcing term, are represented by neural networks. We proved a universal approximation theorem showing that GFHNNs are dense in the space of Lagrange-d'Alembert maps in the $C^r$ topology on compact sets. We further extended the theory to parameter-dependent forced Hamiltonian systems and introduced Parametric Generalized Forced Hamiltonian Neural Networks (PGFHNNs), together with a corresponding universal approximation theorem.

We also considered stochastic forced Hamiltonian systems. By treating the multiple Stratonovich integrals appearing in the Stratonovich--Taylor expansion as parameters, stochastic Lagrange-d'Alembert flows can be viewed as parameter-dependent Lagrange-d'Alembert maps. This allows PGFHNNs to be applied directly to stochastic forced Hamiltonian systems whenever information about the underlying Wiener process is available, thereby extending the approximation framework developed for deterministic systems to this stochastic setting.

The proposed architectures were evaluated on several benchmark problems involving deterministic and stochastic forced Hamiltonian systems. The numerical results demonstrate that incorporating geometric structure through variational principles and Lagrange-d'Alembert integrators yields models with significantly improved long-time stability and accuracy. In particular, the proposed geometric architectures consistently outperform non-geometric residual neural networks in long-time simulations. We also observed that residual neural networks require substantially larger training datasets to achieve a comparable level of accuracy.

Several directions for future research remain open. A natural extension of the present work is the development of analogous architectures on manifolds, in particular on Lie groups, with potential applications in robotics \cite{DuruisseauxLeok2023,Saemundsson2020}. Another promising area of application is computational plasma physics. Particle discretizations of collisional Vlasov equations possess the structure of stochastic forced Hamiltonian systems \cite{KrausTyranowski2019,LuMengTyranowski2025,TyranowskiVlasovMaxwell}, and many practical applications require repeated simulations using expensive high-fidelity stochastic numerical methods with millions of particles and varying input parameters. In such settings, the underlying Wiener process is known, and the proposed neural network architecture could significantly reduce computational costs by providing a cheaper surrogate for the stochastic flow. This approach could be particularly effective when combined with structure-preserving model reduction techniques \cite{BrantnerKraus2023,TyranowskiStochasticModelReduction,TyranowskiKraus2021}. Finally, our approach to learning stochastic flows could be combined with denoising techniques, such as the autoencoder-based methods developed in \cite{ChenWang2025,Xu2024}, to infer latent random variables. This would make it possible to apply the proposed framework to engineering problems in which the underlying Wiener process is not directly observable.

\section*{Acknowledgments}

We would like to thank Philipp Horn for useful comments and references. We further thank Michael Kraus for useful comments and a thorough review of the code that was used to produce the results presented here. This publication is part of the project \emph{Stochastic Geometric Integrators for Dynamical Systems} with file number OCENW.M.24.105 of the research programme Open Competition Domain Science-M Package 24-2, which is (partly) financed by the Dutch Research Council (NWO) under grant DOI \href{https://doi.org/10.61686/PIDBU22657} {https://doi.org/10.61686/PIDBU22657}.

\FloatBarrier

\bibliographystyle{abbrv}
\bibliography{bibliography}

@article{Anderson1954,
author = {Anderson, P.W.},
title = {A Mathematical Model for the Narrowing of Spectral Lines by Exchange or Motion},
journal = {Journal of the Physical Society of Japan},
volume = {9},
number = {3},
pages = {316-339},
year = {1954},
doi = {10.1143/JPSJ.9.316}
}

@book{ArnoldSDE,
  title={Stochastic {D}ifferential {E}quations: {T}heory and {A}pplications},
  author={Arnold, L.},
  isbn={9780486482361},
  lccn={2010044194},
  series={Dover Books on Mathematics},
  year={2013},
  publisher={Dover Publications}
}

@unpublished{BrantnerKraus2023,
title={Symplectic Autoencoders for Model Reduction of {H}amiltonian Systems},
author={Benedikt Brantner and Michael Kraus},
year={2023},
note={Preprint ar{X}iv:2312.10004}
}

@article{BurbyHenonNets2020,
doi = {10.1088/1361-6587/abcbaa},
url = {https://doi.org/10.1088/1361-6587/abcbaa},
year = {2020},
month = {dec},
publisher = {IOP Publishing},
volume = {63},
number = {2},
pages = {024001},
author = {Burby, J W and Tang, Q and Maulik, R},
title = {Fast neural {P}oincar\'{e} maps for toroidal magnetic fields},
journal = {Plasma Physics and Controlled Fusion}
}

@InProceedings{Chaudhuri2010,
author="Chaudhuri, Jyotipratim Ray
and Chattopadhyay, Sudip",
editor="Chaudhuri, Rajat K.
and Mekkaden, M.V.
and Raveendran, A. V.
and Satya Narayanan, A.",
title="Kubo Oscillator and its Application to Stochastic Resonance: A Microscopic Realization",
booktitle="Recent Advances in Spectroscopy",
year="2010",
publisher="Springer Berlin Heidelberg",
address="Berlin, Heidelberg",
pages="75--83",
isbn="978-3-642-10322-3"
}

@article{ChaudhuriMicroscopic2009,
title = {Microscopic realization of {K}ubo oscillator},
journal = "Chemical Physics Letters",
volume = "480",
number = "1",
pages = "140 - 143",
year = "2009",
doi = "https://doi.org/10.1016/j.cplett.2009.08.057",
author = "Jyotipratim Ray Chaudhuri and Sudip Chattopadhyay"
}

@article{ChaudhuriNonequilibrium2009,
author = {Chaudhuri,Jyotipratim Ray  and Chaudhury,Pinaki  and Chattopadhyay,Sudip },
title = {Harmonic oscillator in presence of nonequilibrium environment},
journal = {The Journal of Chemical Physics},
volume = {130},
number = {23},
pages = {234109},
year = {2009},
doi = {10.1063/1.3155698}
}

@article{ChenChen1995,
  title={Universal approximation to nonlinear operators by neural networks with arbitrary activation functions and its application to dynamical systems},
  author={Chen, Tianping and Chen, Hong},
  journal={IEEE transactions on neural networks},
  volume={6},
  number={4},
  pages={911--917},
  year={1995},
  publisher={IEEE}
}

@inproceedings{ChenRubanova2018,
 author = {Chen, Ricky T. Q. and Rubanova, Yulia and Bettencourt, Jesse and Duvenaud, David K},
 booktitle = {Advances in Neural Information Processing Systems},
 editor = {S. Bengio and H. Wallach and H. Larochelle and K. Grauman and N. Cesa-Bianchi and R. Garnett},
 pages = {},
 publisher = {Curran Associates, Inc.},
 title = {Neural Ordinary Differential Equations},
 url = {https://proceedings.neurips.cc/paper_files/paper/2018/file/69386f6bb1dfed68692a24c8686939b9-Paper.pdf},
 volume = {31},
 year = {2018}
}

@article{ChenMatsubara2021,
  title={Neural symplectic form: Learning {H}amiltonian equations on general coordinate systems},
  author={Chen, Yuhan and Matsubara, Takashi and Yaguchi, Takaharu},
  journal={Advances in Neural Information Processing Systems},
  volume={34},
  pages={16659--16670},
  year={2021}
}

@article{ChenWang2025,
  title        = {Learning Stochastic {H}amiltonian Systems via Stochastic Generating Function Neural Network},
  author       = {Chen, Chen and Wang, Lijin and Cao, Yanzhao and Cheng, Xupeng},
  journal      = {arXiv preprint},
  volume       = {arXiv:2507.14467},
  year         = {2025},
  url          = {https://arxiv.org/abs/2507.14467},
  eprint       = {2507.14467},
  archivePrefix= {arXiv},
  primaryClass = {math.DS}
}

@article{ChenXiu2024,
title = {Learning stochastic dynamical system via flow map operator},
journal = {Journal of Computational Physics},
volume = {508},
pages = {112984},
year = {2024},
issn = {0021-9991},
doi = {https://doi.org/10.1016/j.jcp.2024.112984},
url = {https://www.sciencedirect.com/science/article/pii/S002199912400233X},
author = {Yuan Chen and Dongbin Xiu}
}

@Article{ChengWang2024,
AUTHOR = {Cheng, Xupeng and Wang, Lijin and Cao, Yanzhao},
TITLE = {Quadrature Based Neural Network Learning of Stochastic {H}amiltonian Systems},
JOURNAL = {Mathematics},
VOLUME = {12},
YEAR = {2024},
NUMBER = {16},
ARTICLE-NUMBER = {2438},
URL = {https://www.mdpi.com/2227-7390/12/16/2438},
ISSN = {2227-7390},
DOI = {10.3390/math12162438}
}

@book{CoddingtonLevinsonBook,
  author    = {Coddington, Earl A. and Levinson, Norman},
  title     = {Theory of Ordinary Differential Equations},
  publisher = {McGraw-Hill},
  year      = {1955},
  address   = {New York},
  series    = {International Series in Pure and Applied Mathematics}
}

@article{ConstantineSavits1996,
author = {Gregory M. Constantine and Thomas H. Savits},
title = {A Multivariate {F}a{\`a} di {B}runo Formula with Applications},
journal = {Transactions of the American Mathematical Society},
volume = {348},
number = {2},
pages = {503--520},
year = {1996}
}

@article{CourtesFranckKrausNavoretTremant2025,
  title={Neural non-canonical {H}amiltonian dynamics for long-time simulations},
  author={Court{\`e}s, Cl{\'e}mentine and Franck, Emmanuel and Kraus, Michael and Navoret, Laurent and Tr{\'e}mant, L{\'e}opold},
  journal={arXiv preprint arXiv:2510.01788},
  year={2025}
}

@article{CranmerGreydanusHoyerBattagliaSpergelHo2020,
  title={Lagrangian neural networks},
  author={Cranmer, Miles and Greydanus, Sam and Hoyer, Stephan and Battaglia, Peter and Spergel, David and Ho, Shirley},
  journal={arXiv preprint arXiv:2003.04630},
  year={2020}
}

@article{DecoBrauer1995,
  title={Nonlinear higher-order statistical decorrelation by volume-conserving neural architectures},
  author={Deco, Gustavo and Brauer, Wilfried},
  journal={Neural Networks},
  volume={8},
  number={4},
  pages={525--535},
  year={1995},
  publisher={Elsevier}
}

@article{DesaiMattheakisSondakProtopapasRoberts2021,
  title={Port-{H}amiltonian neural networks for learning explicit time-dependent dynamical systems},
  author={Desai, Shaan A and Mattheakis, Marios and Sondak, David and Protopapas, Pavlos and Roberts, Stephen J},
  journal={Physical Review E},
  volume={104},
  number={3},
  pages={034312},
  year={2021},
  publisher={APS}
}

@article{Dietrich2023,
    author = {Dietrich, Felix and Makeev, Alexei and Kevrekidis, George and Evangelou, Nikolaos and Bertalan, Tom and Reich, Sebastian and Kevrekidis, Ioannis G.},
    title = {Learning effective stochastic differential equations from microscopic simulations: Linking stochastic numerics to deep learning},
    journal = {Chaos: An Interdisciplinary Journal of Nonlinear Science},
    volume = {33},
    number = {2},
    pages = {023121},
    year = {2023},
    month = {02},
    issn = {1054-1500},
    doi = {10.1063/5.0113632},
    url = {https://doi.org/10.1063/5.0113632},
    eprint = {https://pubs.aip.org/aip/cha/article-pdf/doi/10.1063/5.0113632/16744177/023121_1_online.pdf},
}

@INPROCEEDINGS{DridiDrumetzFablet2021,
  author={Dridi, Naura and Drumetz, Lucas and Fablet, Ronan},
  booktitle={2021 29th European Signal Processing Conference (EUSIPCO)},
  title={Learning stochastic dynamical systems with neural networks mimicking the {E}uler-{M}aruyama scheme},
  year={2021},
  volume={},
  number={},
  pages={1990-1994},
  doi={10.23919/EUSIPCO54536.2021.9616068}
}

@InProceedings{DuruisseauxLeok2023,
  title = 	 {Lie Group Forced Variational Integrator Networks for Learning and Control of Robot Systems},
  author =       {Duruisseaux, Valentin and Duong, Thai P. and Leok, Melvin and Atanasov, Nikolay},
  booktitle = 	 {Proceedings of The 5th Annual Learning for Dynamics and Control Conference},
  pages = 	 {731--744},
  year = 	 {2023},
  editor = 	 {Matni, Nikolai and Morari, Manfred and Pappas, George J.},
  volume = 	 {211},
  series = 	 {Proceedings of Machine Learning Research},
  month = 	 {15--16 Jun},
  publisher =    {PMLR},
  url = 	 {https://proceedings.mlr.press/v211/duruisseaux23a.html}
}

@ARTICLE{Fox1987,
  author={Fox, R.F. and Roy, R. and Yu, A.W.},
  title={Tests of numerical simulation algorithms for the {K}ubo oscillator},
  journal={Journal of Statistical Physics},
  year={1987},
	pages={477-487},
  volume={47}
}

@article{Gitterman2004,
  title = {Harmonic oscillator with fluctuating damping parameter},
  author = {Gitterman, M.},
  journal = {Phys. Rev. E},
  volume = {69},
  issue = {4},
  pages = {041101},
  numpages = {4},
  year = {2004},
  month = {Apr},
  publisher = {American Physical Society},
  doi = {10.1103/PhysRevE.69.041101},
  url = {https://link.aps.org/doi/10.1103/PhysRevE.69.041101}
}

@book{goodfellow2016deep,
  title={Deep learning},
  author={Goodfellow, Ian and Bengio, Yoshua and Courville, Aaron and Bengio, Yoshua},
  volume={1},
  year={2016},
  publisher={MIT press Cambridge}
}

@article{GreydanusDzambaYosinski2019,
  title={Hamiltonian neural networks},
  author={Greydanus, Samuel and Dzamba, Misko and Yosinski, Jason},
  journal={Advances in neural information processing systems},
  volume={32},
  year={2019}
}

@software{GeometricMachineLearning,
  author = {Benedikt Brantner and Michael Kraus},
  title = {{GeometricMachineLearning.jl}: v0.5.0},
  month = aug,
  version = {0.5.0},
  year = {2026},
  publisher = {Zenodo},
  doi = {10.5281/zenodo.19677961},
  url = {https://doi.org/10.5281/zenodo.19677961},
  note = {https://doi.org/10.5281/zenodo.19677961}
}

@book{HLWGeometric,
  title={Geometric Numerical Integration: Structure-Preserving Algorithms for Ordinary Differential Equations},
  author={Hairer, E. and Lubich, C. and Wanner, G.G.},
  isbn={9783540430032},
  lccn={2002023334},
  series={Springer Series in Computational Mathematics},
  url={http://books.google.com/books?id=O5CfNSGTP\_EC},
  year={2002},
  publisher={Springer, New York}
}

@book{HWODE1,
  title={Solving Ordinary Differential Equations {I}: Nonstiff Problems},
  author={Hairer, E. and N{\o}rsett, S.P. and Wanner, G.},
  series={Springer Series in Computational Mathematics},
  volume={8},
  edition={2nd},
  year={1993},
  publisher={Springer}
}

@book{HaleODE1969,
  author    = {Jack K. Hale},
  title     = {Ordinary Differential Equations},
  publisher = {Wiley-Interscience},
  address   = {New York},
  year      = {1969}
}

@article{HallLeokSpectral,
 author = {Hall, James and Leok, Melvin},
 title = {Spectral Variational Integrators},
 journal = {Numer. Math.},
 volume = {130},
 number = {4},
 month = {Aug},
 year = {2015},
 pages = {681--740}
}

@misc{HansenCelledoniTapley2025,
  title        = {Learning mechanical systems from real-world data using discrete forced {L}agrangian dynamics},
  author       = {Martine Dyring Hansen and Elena Celledoni and Benjamin Kwanen Tapley},
  howpublished = {arXiv preprint},
  year         = {2025},
  eprint       = {2505.20370},
  archivePrefix = {arXiv},
  primaryClass = {eess.SY},
  doi          = {10.48550/arXiv.2505.20370},
  url          = {https://arxiv.org/abs/2505.20370},
	note         = {Available at https://arxiv.org/abs/2505.20370}
}

@InProceedings{Havens2021,
  title = 	 {Forced Variational Integrator Networks for Prediction and Control of Mechanical Systems},
  author =       {Havens, Aaron and Chowdhary, Girish},
  booktitle = 	 {Proceedings of the 3rd Conference on Learning for Dynamics and Control},
  pages = 	 {1142--1153},
  year = 	 {2021},
  editor = 	 {Jadbabaie, Ali and Lygeros, John and Pappas, George J. and Parrilo, Pablo A. and Recht, Benjamin and Tomlin, Claire J. and Zeilinger, Melanie N.},
  volume = 	 {144},
  series = 	 {Proceedings of Machine Learning Research},
  month = 	 {07 -- 08 June},
  publisher =    {PMLR},
  url = 	 {https://proceedings.mlr.press/v144/havens21a.html}
}

@InProceedings{HeResNets2016,
author = {He, Kaiming and Zhang, Xiangyu and Ren, Shaoqing and Sun, Jian},
title = {Deep Residual Learning for Image Recognition},
booktitle = {Proceedings of the {IEEE} Conference on Computer Vision and Pattern Recognition ({CVPR})},
year = {2016},
pages = {770--778}
}

@Article{HolmTyranowskiGalerkin,
author="Holm, Darryl D. and Tyranowski, Tomasz M.",
title={Stochastic discrete {H}amiltonian variational integrators},
journal="BIT Numerical Mathematics",
year="2018",
volume="58",
number="4",
pages="1009--1048",
issn="1572-9125",
doi="10.1007/s10543-018-0720-2",
url="https://doi.org/10.1007/s10543-018-0720-2"
}

@article{HornKorenGHNN,
title = {A generalized framework of neural networks for {H}amiltonian systems},
journal = {Journal of Computational Physics},
volume = {521},
pages = {113536},
year = {2025},
doi = {https://doi.org/10.1016/j.jcp.2024.113536},
author = {Philipp Horn and Veronica {Saz Ulibarrena} and Barry Koren and Simon {Portegies Zwart}}
}

@misc{HornKorenPGHNN,
author = {Horn, Philipp and Koren, Barry},
title = {Parametric Generalized {H}amiltonian Neural Networks},
year = {2026},
note = {SSRN preprint},
url = {https://papers.ssrn.com/sol3/papers.cfm?abstract_id=6427896}
}

@phdthesis{Horn2026PHD,
  author       = {Horn, Philipp},
  title        = {Structure-Preserving Neural Networks for Hamiltonian Systems},
  school       = {Eindhoven University of Technology},
  year         = {2026},
  address      = {Eindhoven},
  type         = {PhD thesis},
  pages        = {109},
  isbn         = {978-90-386-6675-4},
  url          ={https://research.tue.nl/en/publications/structure-preserving-neural-networks-for-hamiltonian-systems/}
}

@article{HornikStinchcombeWhite1989,
  title={Multilayer feedforward networks are universal approximators},
  author={Hornik, Kurt and Stinchcombe, Maxwell and White, Halbert},
  journal={Neural networks},
  volume={2},
  number={5},
  pages={359--366},
  year={1989},
  publisher={Elsevier}
}

@article{HornikStinchcombeWhite1990,
title = {Universal approximation of an unknown mapping and its derivatives using multilayer feedforward networks},
journal = {Neural Networks},
volume = {3},
number = {5},
pages = {551-560},
year = {1990},
issn = {0893-6080},
doi = {https://doi.org/10.1016/0893-6080(90)90005-6},
url = {https://www.sciencedirect.com/science/article/pii/0893608090900056},
author = {Kurt Hornik and Maxwell Stinchcombe and Halbert White}
}

@article{Hornik1991,
title = {Approximation capabilities of multilayer feedforward networks},
journal = {Neural Networks},
volume = {4},
number = {2},
pages = {251-257},
year = {1991},
issn = {0893-6080},
doi = {https://doi.org/10.1016/0893-6080(91)90009-T},
url = {https://www.sciencedirect.com/science/article/pii/089360809190009T},
author = {Kurt Hornik}
}

@book{IkedaWatanabe1989,
  title={Stochastic {D}ifferential {E}quations and {D}iffusion {P}rocesses},
  author={Ikeda, N. and Watanabe, S.},
  isbn={9784062032315},
  lccn={89212869},
  series={Kodansha scientific books},
  year={1989},
  publisher={North-Holland}
}

@article{JinZhangZhuTangKarniadakis2020,
  title={Symp{N}ets: Intrinsic structure-preserving symplectic networks for identifying {H}amiltonian systems},
  author={Jin, Pengzhan and Zhang, Zhen and Zhu, Aiqing and Tang, Yifa and Karniadakis, George Em},
  journal={Neural Networks},
  volume={132},
  pages={166--179},
  year={2020},
  publisher={Elsevier}
}

@inbook{Jung2003,
author = {Jung, Younjoon and Barkai, Eli and Silbey, Robert J.},
publisher = {John Wiley \& Sons, Ltd},
title = {A Stochastic Theory of Single Molecule Spectroscopy},
booktitle = {Advances in Chemical Physics},
chapter = {4},
pages = {199-266},
doi = {10.1002/0471231509.ch4},
year = {2003}
}

@article{KaneMarsden2000,
author = {Kane, C. and Marsden, J. E. and Ortiz, M. and West, M.},
title = {Variational integrators and the {N}ewmark algorithm for conservative and dissipative mechanical systems},
journal = {International Journal for Numerical Methods in Engineering},
year = {2000},
volume = {49},
number = {10},
pages = {1295-1325},
doi = {10.1002/1097-0207(20001210)49:10<1295::AID-NME993>3.0.CO;2-W}
}

@book{KloedenPlatenSDE,
  title={{Numerical Solution of Stochastic Differential Equations}},
  author={Kloeden, P.E. and Platen, E.},
  series={{Applications of Mathematics : Stochastic Modelling and Applied Probability}},
  year={1995},
  publisher={Springer}
}

@inproceedings{KongSunZhang2020,
author = {Kong, Lingkai and Sun, Jimeng and Zhang, Chao},
title = {{SDE-N}et: equipping deep neural networks with uncertainty estimates},
year = {2020},
publisher = {JMLR.org},
booktitle = {Proceedings of the 37th International Conference on Machine Learning},
articleno = {501},
numpages = {11},
series = {ICML'20}
}

@article{KrausTyranowski2019,
    author = {Kraus, Michael and Tyranowski, Tomasz M.},
    title = {Variational integrators for stochastic dissipative {H}amiltonian systems},
    journal = {IMA Journal of Numerical Analysis},
    volume = {41},
    number = {2},
    pages = {1318-1367},
    year = {2020},
    issn = {0272-4979},
    doi = {10.1093/imanum/draa022},
    url = {https://doi.org/10.1093/imanum/draa022}
}

@article{Kubo1954,
author = {Kubo, Ryogo},
title = {Note on the Stochastic Theory of Resonance Absorption},
journal = {Journal of the Physical Society of Japan},
volume = {9},
number = {6},
pages = {935-944},
year = {1954},
doi = {10.1143/JPSJ.9.935}
}

@book{Kunita1997,
  title={Stochastic {F}lows and {S}tochastic {D}ifferential {E}quations},
  author={Kunita, H.},
  isbn={9780521599252},
  lccn={89070813},
  series={Cambridge {S}tudies in {A}dvanced {M}athematics},
  year={1997},
  publisher={Cambridge {U}niversity {P}ress}
}

@article{LagarisLikasFotiadis1998,
  title={Artificial neural networks for solving ordinary and partial differential equations},
  author={Lagaris, Isaac E and Likas, Aristidis and Fotiadis, Dimitrios I},
  journal={IEEE transactions on neural networks},
  volume={9},
  number={5},
  pages={987--1000},
  year={1998},
  publisher={IEEE}
}

@article{LeeKang1990,
  title={Neural algorithm for solving differential equations},
  author={Lee, Hyuk and Kang, In Seok},
  journal={Journal of computational physics},
  volume={91},
  number={1},
  pages={110--131},
  year={1990},
  publisher={Elsevier}
}

@article{LeokZhang,
  title={Discrete {H}amiltonian variational integrators},
  author={Leok, Melvin and Zhang, Jingjing},
  journal={IMA Journal of Numerical Analysis},
  volume={31},
  number={4},
  pages={1497--1532},
  year={2011}
}

@Article{LeokShingel,
author="Leok, Melvin and Shingel, Tatiana",
title="General techniques for constructing variational integrators",
journal="Frontiers of Mathematics in China",
year="2012",
volume="7",
number="2",
pages="273--303"
}

@article{LewAVI,
  title={Asynchronous variational integrators},
  author={Lew, Adrian and Marsden, Jerrold E and Ortiz, Michael and West, Matthew},
  journal={Archive for Rational Mechanics and Analysis},
  volume={167},
  number={2},
  pages={85--146},
  year={2003}
}

@unpublished{LiuXiao2019,
author = {Liu, Xuanqing and Xiao, Tesi and Si, Si and Cao, Qin and Kumar, Sanjiv and Hsieh, Cho-Jui},
year = {2019},
title = {Neural {SDE}: Stabilizing Neural {ODE} Networks with Stochastic Noise},
doi = {10.48550/arXiv.1906.02355},
note={Unpublished, ar{X}iv:1906.02355}
}

@article{LuMengTyranowski2025,
title = {High-order stochastic integration schemes for the {R}osenbluth-{T}rubnikov collision operator in particle simulations},
journal = {Journal of Computational Physics},
volume = {527},
pages = {113811},
year = {2025},
doi = {https://doi.org/10.1016/j.jcp.2025.113811},
author = {Zhixin Lu and Guo Meng and Tomasz Tyranowski and Alex Chankin}
}

@article{MaDing2015,
 author = {Ma, Qiang and Ding, Xiaohua},
 title = {Stochastic Symplectic Partitioned {R}unge-{K}utta Methods for Stochastic {H}amiltonian Systems with Multiplicative Noise},
 journal = {Appl. Math. Comput.},
 issue_date = {February 2015},
 volume = {252},
 number = {C},
 month = feb,
 year = {2015},
 pages = {520--534}
}

@article{MarsdenPatrickShkoller,
  title={Multisymplectic geometry, variational integrators, and nonlinear {PDE}s},
  author={Marsden, Jerrold E and Patrick, George W and Shkoller, Steve},
  journal={Communications in Mathematical Physics},
  volume={199},
  number={2},
  pages={351--395},
  year={1998}
}

@book{MarsdenRatiuSymmetry,
  title={Introduction to Mechanics and Symmetry},
  author={Marsden, Jerrold and Ratiu, Tudor},
  series={Texts in Applied Mathematics},
  volume={17},
  year={1994},
  publisher={Springer Verlag}
}

@article{MarsdenWestVarInt,
  title={Discrete mechanics and variational integrators},
  author={Marsden, Jerrold E and West, Matthew},
  journal={Acta Numerica},
  volume={10},
  number={1},
  pages={357--514},
  year={2001}
}

@book{McDuffSalamonBook,
    author = {McDuff, Dusa and Salamon, Dietmar},
    title = {Introduction to Symplectic Topology},
    publisher = {Oxford University Press},
    year = {2017},
    isbn = {9780198794899},
    doi = {10.1093/oso/9780198794899.001.0001}
}

@article{McLachlanQuispel,
  title={Geometric integrators for {ODE}s},
  author={McLachlan, Robert I and Quispel, G Reinout W},
  journal={Journal of Physics A: Mathematical and General},
  volume={39},
  number={19},
  pages={5251--5285},
  year={2006}
}

@article{MeadeFernandez1994,
  title={The numerical solution of linear ordinary differential equations by feedforward neural networks},
  author={Meade Jr, Andrew J and Fernandez, Alvaro A},
  journal={Mathematical and computer modelling},
  volume={19},
  number={12},
  pages={1--25},
  year={1994},
  publisher={Elsevier}
}

@book{MilsteinBook,
  title={Numerical Integration of Stochastic Differential Equations},
  author={Milstein, G.N.},
  series={Mathematics and Its Applications},
  year={1995},
  publisher={Springer Netherlands}
}

@article{MilsteinRepin,
title = "Numerical methods for stochastic systems preserving symplectic structures",
journal = "SIAM J. Numer. Anal.",
volume = "40",
number = "4",
pages = "1583 - 1604",
year = "2002",
author = "Milstein, G. N. and Repin, Yu. M. and Tretyakov, M. V."
}

@book{MukamelBook1995,
  title={Principles of nonlinear optical spectroscopy},
  author={Mukamel, S.},
  isbn={9780195092783},
  lccn={gb95087868},
  series={Oxford series in optical and imaging sciences},
  url={https://books.google.de/books?id=k\_7uAAAAMAAJ},
  year={1995},
  publisher={Oxford University Press}
}

@article{OberBlobaum2016,
author = {Ober-Bl{\"o}baum, Sina},
title = {Galerkin variational integrators and modified symplectic {R}unge-{K}utta methods},
journal = {IMA Journal of Numerical Analysis},
volume = {37},
number = {1},
pages = {375-406},
year = {2017},
doi = {10.1093/imanum/drv062},
}

@article{Pavlov,
  title={Structure-preserving discretization of incompressible fluids},
  author={Pavlov, Dmitry and Mullen, Patrick and Tong, Yiying and Kanso, Eva and Marsden, Jerrold E and Desbrun, Mathieu},
  journal={Physica D: Nonlinear Phenomena},
  volume={240},
  number={6},
  pages={443--458},
  year={2011}
}

@article{PengArai2022,
author = {Peng, Linyu and Arai, Noriyoshi and Yasuoka, Kenji},
title = {A stochastic {H}amiltonian formulation applied to dissipative particle dynamics},
year = {2022},
volume = {426},
number = {C},
doi = {10.1016/j.amc.2022.127126},
journal = {Appl. Math. Comput.},
numpages = {13}
}

@book{PolterovichBook2001,
  title     = {The Geometry of the Group of Symplectic Diffeomorphisms},
  author    = {Polterovich, Leonid},
  series    = {Lectures in Mathematics, ETH Z{\"u}rich},
  year      = {2001},
  publisher = {Birkh{\"a}user Basel},
  doi       = {10.1007/978-3-0348-8299-6},
  isbn      = {978-3-0348-8299-6}
}

@article{Ripoll2001,
author = {Ripoll,M. and Ernst,M. H.  and Espa\~{n}ol,P. },
title = {Large scale and mesoscopic hydrodynamics for dissipative particle dynamics},
journal = {The Journal of Chemical Physics},
volume = {115},
number = {15},
pages = {7271-7284},
year = {2001},
doi = {10.1063/1.1402989}
}

@inproceedings{RowleyMarsden,
  title={Variational integrators for degenerate {L}agrangians, with application to point vortices},
  author={Rowley, Clarence W and Marsden, Jerrold E},
  booktitle={Decision and Control, 2002, Proceedings of the 41st IEEE Conference on},
  volume={2},
  pages={1521--1527},
  year={2002},
  organization={IEEE}
}

@InProceedings{Saemundsson2020,
  title = 	 {Variational Integrator Networks for Physically Structured Embeddings},
  author =       {Saemundsson, Steindor and Terenin, Alexander and Hofmann, Katja and Deisenroth, Marc},
  booktitle = 	 {Proceedings of the Twenty Third International Conference on Artificial Intelligence and Statistics},
  pages = 	 {3078--3087},
  year = 	 {2020},
  editor = 	 {Chiappa, Silvia and Calandra, Roberto},
  volume = 	 {108},
  series = 	 {Proceedings of Machine Learning Research},
  month = 	 {26--28 Aug},
  publisher =    {PMLR},
  url = 	 {https://proceedings.mlr.press/v108/saemundsson20a.html}
}

@article{SanzSerna,
  title={Symplectic integrators for {H}amiltonian problems: an overview},
  author = {Sanz-Serna, J. M.},
  journal={Acta Numerica},
  volume={1},
  pages = {243--286},
  year={1992}
}

@incollection{Skeel1999,
author="Skeel, Robert D.",
editor="Ainsworth, Mark
and Levesley, Jeremy
and Marletta, Marco",
title="Integration Schemes for Molecular Dynamics and Related Applications",
bookTitle="The Graduate Student's Guide to Numerical Analysis '98: Lecture Notes from the VIII EPSRC Summer School in Numerical Analysis",
year="1999",
publisher="Springer Berlin Heidelberg",
address="Berlin, Heidelberg",
pages="119--176",
isbn="978-3-662-03972-4",
doi="10.1007/978-3-662-03972-4_4",
url="https://doi.org/10.1007/978-3-662-03972-4_4"
}

@article{Sosanya2022,
  title={Dissipative {H}amiltonian neural networks: {L}earning dissipative and conservative dynamics separately},
  author={Sosanya, Andrew and Greydanus, Sam},
  journal={arXiv preprint arXiv:2201.10085},
  year={2022}
}

@article{Sonnendrucker2015,
title = {A split control variate scheme for {PIC} simulations with collisions},
journal = "Journal of Computational Physics",
volume = "295",
pages = "402 - 419",
year = "2015",
issn = "0021-9991",
doi = "https://doi.org/10.1016/j.jcp.2015.04.004",
url = "http://www.sciencedirect.com/science/article/pii/S0021999115002442",
author = "Eric Sonnendr{\"u}cker and Abigail Wacher and Roman Hatzky and Ralf Kleiber"
}

@article{SternDesbrun,
  title={Variational integrators for {M}axwell's equations with sources},
  author={Stern, Ari and Tong, Yiying and Desbrun, Mathieu and Marsden, Jerrold E},
  journal={PIERS Online},
  volume={4},
  number={7},
  pages={711--715},
  doi={10.2529/PIERS071019000855},
  year={2008}
}

@article{SunWang2016,
title = {Stochastic symplectic methods based on the {P}ad\'e approximations for linear stochastic {H}amiltonian systems},
journal = {Journal of Computational and Applied Mathematics},
year = {2016},
note = {http://dx.doi.org/10.1016/j.cam.2016.08.011},
author = "Liying Sun and Lijin Wang"
}

@article{Turaev2002,
doi = {10.1088/0951-7715/16/1/308},
url = {https://doi.org/10.1088/0951-7715/16/1/308},
year = {2002},
volume = {16},
number = {1},
pages = {123-135},
author = {Dmitry Turaev},
title = {Polynomial approximations of symplectic dynamics and richness of chaos in non-hyperbolic area-preserving maps},
journal = {Nonlinearity}
}

@article{TyranowskiVlasovMaxwell,
author = {Tyranowski, Tomasz M. },
title = {Stochastic variational principles for the collisional {V}lasov-{M}axwell and {V}lasov-{P}oisson equations},
journal = {Proceedings of the Royal Society A: Mathematical, Physical and Engineering Sciences},
volume = {477},
number = {2252},
pages = {20210167},
year = {2021},
doi = {10.1098/rspa.2021.0167},
URL = {https://royalsocietypublishing.org/doi/abs/10.1098/rspa.2021.0167},
eprint = {https://royalsocietypublishing.org/doi/pdf/10.1098/rspa.2021.0167}
}

@unpublished{TyranowskiStochasticModelReduction,
title={Data-driven structure-preserving model reduction for stochastic {H}amiltonian systems},
author={Tyranowski, Tomasz M.},
year={2022},
note={Preprint ar{X}iv:2201.13391}
}

@article{TyranowskiDesbrunRAMVI,
AUTHOR = {Tyranowski, Tomasz M. and Desbrun, Mathieu},
TITLE = {R-Adaptive Multisymplectic and Variational Integrators},
JOURNAL = {Mathematics},
VOLUME = {7},
YEAR = {2019},
NUMBER = {7},
ARTICLE-NUMBER = {642},
URL = {https://www.mdpi.com/2227-7390/7/7/642},
ISSN = {2227-7390},
DOI = {10.3390/math7070642}
}

@article{TyranowskiDesbrunLinearLagrangians,
AUTHOR = {Tyranowski, Tomasz M. and Desbrun, Mathieu},
TITLE = {Variational Partitioned {R}unge-{K}utta Methods for {L}agrangians Linear in Velocities},
JOURNAL = {Mathematics},
VOLUME = {7},
YEAR = {2019},
NUMBER = {9},
ARTICLE-NUMBER = {861},
URL = {https://www.mdpi.com/2227-7390/7/9/861},
ISSN = {2227-7390},
DOI = {10.3390/math7090861}
}

@article{TyranowskiKraus2021,
author = {Tyranowski, Tomasz M. and Kraus, Michael},
title = {Symplectic model reduction methods for the {V}lasov equation},
journal = {Contributions to Plasma Physics},
volume = {},
number = {},
year={2022},
pages = {e202200046},
doi = {https://doi.org/10.1002/ctpp.202200046}
}

@article{VanKampen1976,
title = "Stochastic differential equations",
journal = "Physics Reports",
volume = "24",
number = "3",
pages = "171 - 228",
year = "1976",
doi = "https://doi.org/10.1016/0370-1573(76)90029-6",
author = "Van Kampen, N.G."
}

@article{WangYao2021,
AUTHOR = {Wang, Yongguang and Yao, Shuzhen},
TITLE = {Neural Stochastic Differential Equations with Neural Processes Family Members for Uncertainty Estimation in Deep Learning},
JOURNAL = {Sensors},
VOLUME = {21},
YEAR = {2021},
NUMBER = {11},
ARTICLE-NUMBER = {3708},
URL = {https://www.mdpi.com/1424-8220/21/11/3708},
PubMedID = {34073566},
ISSN = {1424-8220},
DOI = {10.3390/s21113708}
}

@phdthesis{WestPHD,
      author={West, Matthew},
      title={Variational Integrators},
      year={2004},
      school={California Institute of Technology}
}

@misc{XiaoZhangTang2024,
  title         = {Generalized {L}agrangian Neural Networks},
  author        = {Shanshan Xiao and Jiawei Zhang and Yifa Tang},
	howpublished  = {arXiv preprint},
  year          = {2024},
  eprint        = {2401.03728},
  archivePrefix = {arXiv},
  primaryClass  = {math.DS},
  doi           = {10.48550/arXiv.2401.03728},
  url           = {https://arXiv.org/abs/2401.03728},
	note          = {Available at https://arXiv.org/abs/2401.03728}
}

@article{Xu2024,
	author  = {Zhongshu  Xu and Yuan Chen and Qifan  Chen and Dongbin Xiu},
	title   = {Modeling Unknown Stochastic Dynamical System via Autoencoder},
	journal = {Journal of Machine Learning for Modeling and Computing},
	issn    = {2689-3967},
	year    = {2024},
	volume  = {5},
	number  = {3},
	URL     = {https://dl.begellhouse.com/journals/558048804a15188a,7dd2ba1c3481309f,1fa59aa90d1bc10a.html},
	pages   = {87--112},
	DOI     = {10.1615/JMachLearnModelComput.2024055773}
}

@article{ZhongDeyChakraborty2019,
  title={Symplectic ode-net: Learning hamiltonian dynamics with control},
  author={Zhong, Yaofeng Desmond and Dey, Biswadip and Chakraborty, Amit},
  journal={arXiv preprint arXiv:1909.12077},
  year={2019},
  note={Published as a conference paper at ICLR 2020}
}

@article{ZhongDeyChakraborty2020,
  title={Dissipative {SymODEN}: Encoding {H}amiltonian dynamics with dissipation and control into deep learning},
  author={Zhong, Yaofeng Desmond and Dey, Biswadip and Chakraborty, Amit},
  journal={arXiv preprint arXiv:2002.08860},
  year={2020},
  note={Published at ICLR 2020 Workshop on Integration of Deep Neural Models and Differential Equations (DeepDiffEq)}
}

@article{zhu2024dyngma,
  title={{DynGMA}: A robust approach for learning stochastic differential equations from data},
  author={Zhu, Aiqing and Li, Qianxiao},
  journal={Journal of Computational Physics},
  volume={513},
  pages={113200},
  year={2024},
  publisher={Elsevier}
}

\end{document}